\newif\ifimanumstyle
\renewcommand{\PackageWarningNoLine}[2]{}
\newcommand{\R}{\mathbb{R}}
\newcommand{\cJ}{\mathcal J}
\newcommand{\cM}{\mathcal M}
\newcommand{\vertiii}[1]{{\left\vert\kern-0.25ex\left\vert\kern-0.25ex\left\vert #1 
    \right\vert\kern-0.25ex\right\vert\kern-0.25ex\right\vert}}
\providecommand{\st}{\mathrel\vert}
    \newtheorem{theorem}{Theorem}[section]
    \newtheorem{lemma}[theorem]{Lemma}
    \newtheorem{corollary}[theorem]{Corollary}
    \newtheorem{proposition}[theorem]{Proposition}
    \newtheorem{problem}[theorem]{Problem}
\providecommand{\pref}[1]{Problem~\ref{#1}}
\numberwithin{equation}{section} 
\numberwithin{figure}{section} 
\def\ifempty#1{\def\@temp{#1}\ifx\@temp\@empty}
\newcounter{assumptionCounter}
\newcommand{\assitem}[1][]{%
    \ifempty{#1}%
        \renewcommand{\theassumptionCounter}{A\arabic{assumptionCounter}}%
        \refstepcounter{assumptionCounter}
    \else%
        \renewcommand{\theassumptionCounter}{#1}%
        \addtocounter{assumptionCounter}{-1}
        \refstepcounter{assumptionCounter}
    \fi%
    \item[(\theassumptionCounter)]
}
\newenvironment{assumptions}{
    \begin{list}{}
    {
        \setlength{\leftmargin}{13mm}
        \setlength{\itemindent}{0mm}
        \setlength{\labelwidth}{13mm}
    }
}{\end{list}}
\newcounter{todocounter}
\newlength{\todowidthinner}
\DeclareRobustCommand{\MyChange}[3][\empty]{%
  {\color{#2}#3}
  \ifthenelse{\isempty{#1}}{}
  {%
    \addtocounter{todocounter}{1}%
    \ifmmode%
        {\color{#2}\text{$^{\framebox{\arabic{todocounter}}}$}}%
    \else%
        {\color{#2}\text{$^{\arabic{todocounter}}$}}%
    \fi%
    \marginpar{\textcolor{#2}{$^{\arabic{todocounter}}$\textnormal{#1}}}%
  }%
}
\definecolor{lightgray}{rgb}{0.7,0.7,0.7}
\definecolor{cgcol}{rgb}{0,0.3,0.9}
\newcommand{\snote}[1]{ { \color{gray} #1 } }
\newcommand{\xnote}[1]{ }
\newcommand{\eps}{\varepsilon}
\newcommand{\norm}[1]{ \left\Vert #1 \right\Vert }
\newcommand{\abs}[1]{ \left\vert #1 \right\vert }
\newcommand{\wt}[1]{ \widetilde{#1} }
\begin{document}

\ifimanumstyle

    \title{Discretization Error Estimates for Penalty Formulations of a Linearized Canham--Helfrich Type Energy}

    \author{%
    {\sc
    Carsten Gr\"aser\thanks{Corresponding author. Email: graeser@mi.fu-berlin.de},
    and
    Tobias Kies\thanks{Email: tobias.kies@fu-berlin.de}} \\[2pt]
    Freie Universit\"at Berlin, Institut f\"ur Mathematik,\\
    Arnimallee~6, D-14195~Berlin, Germany}

    \shorttitle{Error Estimates for Penalty Formulations}

    \shortauthorlist{C. Gr\"aser and T. Kies}

    \begin{abstract}
    {%
\else

    \title[Error Estimates for Penalty Formulations]{Discretization Error Estimates for Penalty Formulations of a Linearized Canham--Helfrich Type Energy}

    \author[Gr\"aser]{Carsten Gr\"aser}
    \address{Carsten Gr\"aser\\
    Freie Universit\"at Berlin\\
    Institut f\"ur Mathematik\\
    Arnimallee~6\\
    D-14195~Berlin\\
    Germany}
    \email{graeser@mi.fu-berlin.de}

    \author[Kies]{Tobias Kies}
    \address{Tobias Kies\\
    Freie Universit\"at Berlin\\
    Institut f\"ur Mathematik\\
    Arnimallee~9\\
    {D-14195}~Berlin\\
    Germany}
    \email{tobias.kies@fu-berlin.de}

    \begin{abstract}
\fi%
This paper is concerned with minimization of a fourth-order linearized Canham--Helfrich
energy subject to Dirichlet boundary conditions on curves inside the domain.
Such problems arise in the modeling of the mechanical interaction of
biomembranes with embedded particles. There, the curve conditions
result from the imposed particle--membrane coupling.
We prove almost-$H^{\frac{5}{2}}$ regularity of the solution and
then consider two possible penalty formulations.
For the combination of these penalty formulations with a Bogner--Fox--Schmit
finite element discretization we prove discretization error estimates
which are optimal in view of the solution's reduced regularity.
The error estimates are based on a general estimate for linear penalty problems
in Hilbert spaces.
Finally, we illustrate the theoretical results by numerical computations.
An important feature of the presented discretization is that it does not
require to resolve the particle boundary. This is crucial in order to avoid
re-meshing if the presented problem arises as subproblem in a model where
particles are allowed to move or rotate.
\ifimanumstyle
    }
    {biomembrane model, Bogner-Fox-Schmit finite element, discretization error estimate, penalty method}
    \end{abstract}
\else
    \end{abstract}
    \keywords{biomembrane model, Bogner-Fox-Schmit finite element, discretization error estimate, penalty method}
\fi

\maketitle

\tableofcontents

\section{Introduction}\label{sec:introduction}


A standard model for the behavior of biomembranes on a macroscale is the
Canham--Helfrich model which describes a biological membrane mathematically
as a hypersurface $\cM\subseteq \R^3$ minimizing the
Canham--Helfrich energy
\begin{align*}
    \cJ_{CHS}(\cM) = \int_{\cM} \frac{1}{2} \kappa H^2 + \kappa_G K + \sigma  \, \mathrm{d}\mathcal{H}^2\text{.}
\end{align*}
Here $H$ and $K$ are the mean and Gaussian curvature of $\cM$,
the coefficients $\kappa>0$ and $\kappa_G\geq 0$ are the corresponding
bending rigidities, and $\sigma \geq 0$ is the membrane's surface tension.
Under the assumption that the membrane is ``almost~flat'' one can justify
a geometric linearization of this functional for a membrane patch,
leading to the so called  Monge--gauge approximation
\begin{align*}
    \cJ_{\Omega}(u) = \int_\Omega \frac{1}{2} \kappa (\Delta u)^2 + \frac{1}{2} \sigma \vert\nabla u\vert^2 \, \mathrm{d}x
\end{align*}
of the Canham--Helfrich energy. Here, the membrane patch is considered to be the
graph $\cM = \{(x,u(x) \st x \in \Omega \}$ of a function $u\colon \Omega \to \R$
over some reference domain $\Omega\subseteq \R^2$.

A variety of hybrid models for the coupling of embedded particles to the
membrane have been considered. These hybrid models are based on a continuous
surface description of the membrane while particles are described by discrete
entities
(see, e.g., \cite{BahramiEtAl2014, DomFou99, DomFou02, HelfrichJakobsson90, WeiKozHel98}).
For an overview on hybrid models we refer to \cite{ElGrHoKoWo15} and the references cited therein.
In the present paper we consider coupling conditions imposed on the particle
boundaries and follow the notation introduced in \cite{ElGrHoKoWo15}.
There the reference domain $\Omega$ is split into the membrane's domain
$\Omega_B \subseteq \Omega$ and the particles' domain $B = \Omega \setminus \Omega_B$.
The model introduces membrane--particle interactions by functions which
prescribe the membrane's height profile and slope on the interface
$\Gamma := \partial \Omega_B \cap \partial B$.
Altogether this yields an energy minimization problem subject to Dirichlet
boundary conditions on $\Gamma$.

The present paper introduces and analyzes a discretization based on a penalty
formulation for the corresponding boundary value problem that avoids the
resolution of particle boundaries.
The reason for considering penalized boundary conditions is the following:
Since a biological membrane behaves like a fluid in tangential directions,
particles can in principle move and rotate in plane. Any model
simulating moving particles and any method computing optimal particle
positions will thus have to solve multiple problems with varying particle
positions. If the particle boundaries would have to be resolved this
would require mesh-deformation or re-meshing which can be computationally quite expensive.
An alternative is to replace the strict boundary conditions by adequate
penalty terms that can be formulated without having to resolve the boundary.
One such penalty approach has been introduced in \cite{ElGrHoKoWo15} and
is called \emph{soft curve formulation}.
A novel second formulation in this paper will be the \emph{soft bulk formulation}.
Both penalty problems will be defined later on.

Our goal is to derive discretization error estimates for those penalty problems.
For second order equations such estimates are well known, see for example
\cite{Babushka73}, \cite{BarEll86} and \cite{Nitsche71}.
It turns out that the fourth order problems that we are interested in can be
treated sufficiently well using a simple general framework for penalty problems
in Hilbert spaces.
This is mostly due to the fact that the regularity of our solutions is rather
limited in first place and so we can use rather simple estimates to still
obtain optimal rates of convergence.
Thanks to the abstract formulations we get as a byproduct a general error
theory for finite element penalty problems with low regularity.

The paper is structured as follows:
In \cref{sec:notationAndProblems} we introduce the notations and problem
formulations that are used throughout this paper.
\cref{sec:abstractError} is devoted to an abstract error result for linear penalty approximations on Hilbert spaces.
As a foundation for the application of this result to the problem at hand
we then discuss regularity of solutions in \cref{sec:regularity}.
There we prove that a solution of the original problem lies in
$H^{\frac{5}{2}-\delta}(\Omega)$ for all $\delta>0$. 
In \cref{sec:discreteError} we combine the regularity with the
developed abstract results to show the optimal (in view of the restricted regularity)
convergence rate $O(h^{1/2-\delta})$ for a discretization with
Bogner--Fox--Schmit finite elements.
Finally, \cref{sec:computations} illustrates our results by numerical examples
that reproduce our theoretical findings.


\section{Notation and Problem Formulations}\label{sec:notationAndProblems}
We consider a membrane with $k$ embedded particles. To this end
let $\Omega \subset \R^2$ be a bounded reference domain with Lipschitz
boundary $\partial \Omega$ and $B_i \subset \Omega$, $i=1,\dots, k$
the area occupied by the $i$-th particle. For simplicity we assume
that the $B_i$ are closed, nonempty, connected, and pairwise disjoint.
Furthermore we require that each particle $B_i$ has a $C^{1,1}$-boundary.
By $B = \bigcup_{i=1}^k B_i$ and $\Omega_B = \Omega \setminus B$
we denote the area occupied by the particles and the membrane, respectively.
Since the $B_i$ are closed and disjoint the total membrane--particle interface
is given by
\begin{align*}
    \Gamma
        = \bigcup_{i=1}^k \partial B_i
        = \partial \Omega_B \setminus \partial \Omega
        = \partial \Omega_B \cap B.
\end{align*}
For simplicity we consider the space
\begin{align*}
    H = \{v \in H^2(\Omega_B) \st v|_{\partial \Omega} = \partial_\nu v|_{\partial\Omega} = 0 \}
\end{align*}
with homogeneous Dirichlet boundary conditions on the boundary of $\partial \Omega$.
Notice that we can treat Navier and periodic boundary conditions on $\partial \Omega$
using the same techniques (cf. \cite{ElGrHoKoWo15}).

We assume that the membrane--particle interaction is governed by
boundary conditions on the interface. More precisely, each particle
$B_i$ enforces a height profile given by
$f^i_1\colon \partial B_i \to \R$ and a slope given by
$f^i_2\colon \partial B_i \to \R$ to the membrane on its
boundary $\partial B_i$.
I.\,e. we consider boundary values
\begin{align}\label{eq:var_height_bc}
    u|_{\partial B_i} &= f^i_1 + \gamma^i, &
    \partial_\nu u|_{\partial B_i} &= f^i_2
\end{align}
where $\nu$ is the unit outward normal to $\partial \Omega_B$ and the parameter
$\gamma^i \in \R$ is allowed to vary freely in order to factor out the average
height on $\partial B_i$.
This is necessary because we only want to prescribe the height profile,
while the absolute or average height is not fixed.
For the following we collect all such boundary data in a function
$f  = (f_1,f_2) = \sum_{i=1}^{k} f^i\colon \Gamma \to \R^2$ where
$f^i = (f^i_1,f^i_2)\colon \partial B_i \to \R^2$ is extended by zero to the whole of $\Gamma$.
Using this notation we consider the
minimization problem:
\begin{problem}
    \label{prob:optimBase}
    Find $u\in H$ and $\gamma \in \R^k$ minimizing $\cJ_{\Omega_B}(u)$
    subject to 
    \begin{align}\label{eq:variational_particle_bc}
        u|_{\Gamma} = f_1 + \sum_{i=1}^k \gamma_{i} \eta_{i}, \qquad
        \partial_\nu u|_{\Gamma} = f_2\text{ .}
    \end{align}
\end{problem}
Notice that the effect of the free parameter $\gamma_i$ is localized to $\partial B_i$
via the use of the indicator function
$\eta_i := \chi_{\partial B_i} \in L^2(\Gamma)$
which is one on $\partial B_i$ but vanishes on all $\partial B_j, j \neq i$.
In view of the trace theorem we from now on assume that
$f = (f_1,f_2) \in H^{\frac{3}{2}}(\Gamma) \times H^{\frac{1}{2}}(\Gamma)$.
Under this assumption it is known that \pref{prob:optimBase} admits a unique solution
by application of Lax--Milgram's theorem \cite{ElGrHoKoWo15}.

It is in fact possible to simplify this problem formulation.
For this purpose we make use of the trace operator $T_{\Gamma} = (T_{\Gamma}^1, T_{\Gamma}^2)$
\begin{alignat*}{3}
    T_{\Gamma}\colon H^2(\Omega) & \rightarrow H^{\frac{3}{2}}(\Gamma) \times H^{\frac{1}{2}}(\Gamma),
    & \qquad T_{\Gamma}(v) = (v|_{\Gamma}, \ \partial_\nu v|_{\Gamma} ).
\end{alignat*}
This operator is well-defined, continuous, surjective and admits a
continuous right-inverse (see e.\,g. \cite[Theorem 1.5.1.2]{Grisvard85}).
Note that we can also view $T_{\Gamma}$ as a trace operator for $H^2(\Omega_B)$ due
to our regularity assumptions on $\Gamma$.
In order to simplify the notation for boundary values up to the average height
in \eqref{eq:variational_particle_bc} we furthermore introduce the projection operator
\begin{align*}
    P_{\Gamma} \colon H^{\frac{3}{2}}(\Gamma) \times H^{\frac{1}{2}}(\Gamma) &\longrightarrow H^{\frac{3}{2}}(\Gamma) \times H^{\frac{1}{2}}(\Gamma)
\end{align*} 
defined by $P_\Gamma(v_1,v_2) = (P_\Gamma^1(v_1),P_\Gamma^2(v_2))$ where
\begin{align*}
    P_\Gamma^1(v)
        = v - \sum_{i=1}^k |\partial B_i|^{-1}\int_{\partial B_i} v \,\mathrm{d}\sigma
        = v - \sum_{i=1}^k \frac{(v, \eta_i)_{L^2(\Gamma)}}{(\eta_i,\eta_i)_{L^2(\Gamma)}} \eta_i, \qquad
    P_\Gamma^2(v) = v.
\end{align*}
Using this operator we can formulate the boundary conditions \eqref{eq:variational_particle_bc} as
$P_\Gamma (T_\Gamma u - f ) = 0$ which gives rise to
the space of admissible functions defined by
\begin{align*}
    V_f := \left\{ v \in H_0^2(\Omega) \mid P_\Gamma (T_\Gamma v - f ) = 0 \right \}
\end{align*}
and the corresponding minimization problem:
\begin{problem}
    \label{prob:optimHardCurve}
    Find $u\in V_f$ minimizing $\cJ_{\Omega}(u)$.
\end{problem}
Notice that \pref{prob:optimHardCurve} differs from \pref{prob:optimBase}
not only due to the different notation for the boundary conditions
on $\Gamma$, but also because it minimizes $J_\Omega$ for functions
defined on the whole of $\Omega$ and not just on $\Omega_B$.
However, the following result from \cite{ElGrHoKoWo15} shows that a solution of
\pref{prob:optimHardCurve} also immediately yields a solution to
\pref{prob:optimBase}.
\begin{proposition}
    Let $u \in H_0^2(\Omega)$ be the solution of \pref{prob:optimHardCurve}.
    Define
    $\gamma_i = \vert \partial B_i\vert^{-1} (u-f_1, \eta_i )_{L^2(\Gamma)}$ for $i = 1,\dots,k$.
    Then $(u|_{\Omega_B},\gamma)$ is the solution of \pref{prob:optimBase}.
\end{proposition}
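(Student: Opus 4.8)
The plan is to exploit the additive split of the energy across the interface: for every $w \in H^2(\Omega)$ one has $\cJ_\Omega(w) = \cJ_{\Omega_B}(w|_{\Omega_B}) + \sum_{i=1}^k \cJ_{B_i}(w|_{B_i})$, where $\cJ_{B_i}$ is the same integrand restricted to $B_i$, since $\Omega = \Omega_B \cup B$ up to the null set $\Gamma$ and $B = \bigcup_i B_i$ is a disjoint union. The one structural feature I want to use repeatedly is that $\cJ_{B_i}$ sees only $\Delta w$ and $\nabla w$, so it is invariant under adding a constant, $\cJ_{B_i}(w + c) = \cJ_{B_i}(w)$ for every $c \in \R$. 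This is exactly what reconciles the free parameters $\gamma$ in \pref{prob:optimBase} with the rigid trace constraint in \pref{prob:optimHardCurve}.

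First I would check that $(u|_{\Omega_B}, \gamma)$ is admissible for \pref{prob:optimBase}. Restricting $u \in H_0^2(\Omega)$ gives $u|_{\Omega_B} \in H^2(\Omega_B)$ with vanishing value and normal derivative on $\partial\Omega$, hence $u|_{\Omega_B} \in H$. Decoding $P_\Gamma(T_\Gamma u - f) = 0$, the second component yields $\partial_\nu u|_\Gamma = f_2$ directly since $P_\Gamma^2 = \op{id}$; for the first, using $(\eta_i, \eta_j)_{L^2(\Gamma)} = 0$ for $i \neq j$ and $(\eta_i,\eta_i)_{L^2(\Gamma)} = |\partial B_i|$ (disjoint supports), one reads off $u|_\Gamma - f_1 = \sum_i |\partial B_i|^{-1}(u - f_1, \eta_i)_{L^2(\Gamma)}\eta_i = \sum_i \gamma_i\eta_i$, which is precisely \eqref{eq:variational_particle_bc}. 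In particular $u|_{\partial B_i} = f_1 + \gamma_i$ and $\partial_\nu u|_{\partial B_i} = f_2$ on each $\partial B_i$.

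To establish minimality I would take an arbitrary admissible pair $(v, \mu) \in H \times \R^k$ for \pref{prob:optimBase} and glue it to a shifted copy of $u$ on the particles: set $\tilde v := v$ on $\Omega_B$ and $\tilde v := u + (\mu_i - \gamma_i)$ on each $B_i$. By the boundary identities just derived, the trace of $\tilde v$ from the $B_i$-side on $\partial B_i$ is $(f_1 + \gamma_i + (\mu_i - \gamma_i),\, f_2) = (f_1 + \mu_i,\, f_2)$, which matches the trace of $v$ from the $\Omega_B$-side; since $\partial B_i$ is $C^{1,1}$, agreement of value and normal derivative across $\Gamma$ makes $\tilde v \in H^2(\Omega)$, and the homogeneous conditions of $v$ on $\partial\Omega$ give $\tilde v \in H_0^2(\Omega)$. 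A short computation using $P_\Gamma^1(\eta_j) = 0$ (again from disjoint supports) shows $P_\Gamma(T_\Gamma\tilde v - f) = 0$, so $\tilde v \in V_f$. Feeding this into the minimality of $u$ over $V_f$ together with shift-invariance, $\cJ_{B_i}(u|_{B_i} + (\mu_i - \gamma_i)) = \cJ_{B_i}(u|_{B_i})$, gives $\cJ_\Omega(\tilde v) = \cJ_{\Omega_B}(v) + \sum_i\cJ_{B_i}(u|_{B_i}) = \cJ_{\Omega_B}(v) + \bigl(\cJ_\Omega(u) - \cJ_{\Omega_B}(u|_{\Omega_B})\bigr)$, so the inequality $\cJ_\Omega(u) \le \cJ_\Omega(\tilde v)$ collapses to $\cJ_{\Omega_B}(u|_{\Omega_B}) \le \cJ_{\Omega_B}(v)$. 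As $(v,\mu)$ was arbitrary and the objective of \pref{prob:optimBase} does not depend on $\mu$, this makes $(u|_{\Omega_B}, \gamma)$ a minimizer, hence the unique solution recalled above.

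The step I expect to be the main obstacle — or at least the only genuinely load-bearing idea — is the interplay between the particle-wise free parameters and the energy. A naive comparison of $u|_B$ with a $B$-extension of $v$ fails because their boundary heights differ by the locally constant amount $\sum_i(\mu_i - \gamma_i)\eta_i$; it is precisely the constant-shift invariance of $\cJ_{B_i}$, relying on $B$ decomposing into the disjoint pieces $B_i$ so that an independent constant may be chosen on each particle, that removes this discrepancy without changing any energies. The remaining points — the $H^2$-gluing across $\Gamma$ and the $P_\Gamma$ bookkeeping — are routine given the $C^{1,1}$ boundary regularity and the disjointness of the supports of the $\eta_i$.
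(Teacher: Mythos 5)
Your proof is correct, and it is worth noting that the paper itself gives no argument for this proposition --- it is simply imported from \cite{ElGrHoKoWo15} --- so your write-up supplies a self-contained proof where the paper offers only a citation. The three ingredients you isolate are exactly the right ones: the additive splitting $\cJ_\Omega(w)=\cJ_{\Omega_B}(w|_{\Omega_B})+\sum_i\cJ_{B_i}(w|_{B_i})$ (valid since $\Gamma$ is a null set), the invariance of each $\cJ_{B_i}$ under particle-wise constant shifts (the integrand sees only $\nabla w$ and $\Delta w$), and the $H^2$-gluing of an arbitrary competitor $v\in H$ with the shifted copies $u+(\mu_i-\gamma_i)$ on the $B_i$, which is legitimate because value and normal-derivative traces match across the $C^{1,1}$ interface and hence the full gradient trace matches. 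The bookkeeping with $P_\Gamma$ is also right: orthogonality and normalization of the $\eta_i$ turn $P_\Gamma^1(T_\Gamma^1 u-f_1)=0$ into $u|_\Gamma-f_1=\sum_i\gamma_i\eta_i$ with precisely the $\gamma_i$ of the statement, and $P_\Gamma^1(\sum_i\mu_i\eta_i)=0$ shows $\tilde v\in V_f$. Combined with the uniqueness of the solution of \pref{prob:optimBase} recalled in the paper, minimality of $(u|_{\Omega_B},\gamma)$ indeed identifies it as \emph{the} solution. This is, in all likelihood, the same argument as in the cited reference; in any case it is the natural one, and I see no gap.
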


While Problem~\ref{prob:optimHardCurve} allows for variable height of particles,
the full problem considered in \cite{ElGrHoKoWo15} has additional degrees of freedom.
This is due to the fact that particles can move and rotate in the plane of the fluid
membrane.
To avoid mesh-deformation or re-meshing whenever particle positions change we
will in the following drop the hard
constraints at the particle boundaries in favor of a penalized approach.
Replacing the hard curve constraints by penalty terms in the energy functional
leads to:
\begin{problem}
    \label{prob:optimSoftCurve}
    Find $u_\eps \in H^2_0(\Omega)$ minimizing
    \begin{align*}
        \cJ_{\Omega}(u_\eps ) + \sum_{i=1}^2 \frac{1}{\eps_i} \Vert P_\Gamma^i ( T_{\Gamma}^i u_\eps  - f_i ) \Vert_{L^2(\Gamma)}^2\text{.}
    \end{align*}
\end{problem}
This formulation is more favorable than \pref{prob:optimHardCurve} in so far as
it admits a straightforward conforming finite element discretization without re-meshing
in case of variable particle positions.

At this point we want to mention an alternative penalty formulation which is
based on the idea that, for known shapes $B_i$, the solution $u|_B$ could
be computed a-priori up to a constant per component $B_i$.
We define the restriction operator
\begin{alignat*}{3}
    T_B\colon H^2(\Omega) & \longrightarrow H^{2}(B), & \qquad & T_B(v)= v|_B.
\end{alignat*}
Analogously to the curve constraint formulation we introduce the
associated projection operator
\begin{align*}
    P_B\colon H^2(B) & \longrightarrow H^2(B), &
    P_B(v) &= v - \sum_{i=1}^k \frac{(v,\psi_i)_{H^s(B)}}{(\psi_i,\psi_i)_{H^s(B)}} \psi_i,
\end{align*}
for $\psi_i := \chi_{B_i}$ and some fixed $s \in [0,2]$.
Using this notation the bulk constrained problem reads:
\begin{problem}
    \label{prob:optimHardArea}
    Find $u\in H_0^2(\Omega)$ minimizing $\cJ_{\Omega}(u)$ subject to
    \begin{align*}
        P_B T_B u = P_B u|_B \text{.}
    \end{align*}
\end{problem}
One quickly verifies that the solutions of \pref{prob:optimHardCurve} and \pref{prob:optimHardArea} coincide.
Analogously to Problem~\ref{prob:optimSoftCurve} a penalty formulation of Problem~\ref{prob:optimHardArea}
is given by:
\begin{problem}
    \label{prob:optimSoftArea}
    Find $u_\eps \in H_0^2(\Omega)$ and minimizing
    \begin{align*}
        \cJ_{\Omega}(u_\eps) + \frac{1}{\eps} \Vert P_B(T_Bu_\eps - u|_B) \Vert_{H^s(B)}^2\text{.}
    \end{align*}
\end{problem}

While the penalized formulations are more flexible in the sense that
the domain $\Omega_B$ does not have to be resolved for discretization,
one will be faced with the problem of balancing the penalty parameter
and discretization errors. To this end we will first develop an abstract
error estimate for penalized problems
and then
analyze the regularity of the solution of the hard constrained \pref{prob:optimHardCurve}.
Using a suitable regularity result later allows to derive optimal $h$-dependent
values of $\eps_i$ for a discretization with mesh size $h$.



\section{An Error Estimate for Linear Penalty Problems on Hilbert Spaces}\label{sec:abstractError}

In this section we derive an abstract energy error estimate for penalized discretizations
of linearly constrained problems in a Hilbert space setting.
Let $H$ be a Hilbert space and $U_0 \subset H$ a closed subspace.
We consider the affine closed subspace $U = U_0 + u_0 \subseteq H$ of $H$
for some given $u_0 \in H$.
In addition, let $a\colon H \times H \rightarrow \mathbb{R}$ be a
symmetric positive semi-definite bounded bilinear form and
$\ell \colon H \rightarrow \mathbb{R}$ a bounded linear form on $H$.
We furthermore assume that $a(\cdot,\cdot)$ is coercive on $U_0$.
In this setting we consider the affine constrained minimization
problem:

\begin{problem}\label{prob:abstractConstrainedMinimization}
    Find $u \in U$ minimizing
    \begin{align*}
        \frac{1}{2} a(u,u) - \ell(u)\text{.}
    \end{align*}
\end{problem}

Application of Lax--Milgram's theorem yields the existence of a unique solution $u \in U$
of \pref{prob:abstractConstrainedMinimization}
characterized by the variational equation
\begin{align}\label{eq:abstractConstrainedVariation}
    a(u, v) = \ell(v)
    \qquad \forall{v \in U_0}.
\end{align}
From now on $u \in U$ denotes this unique solution.


Now let us assume that we want to approximate this problem by a penalty
formulation over some closed linear subspace $X \subseteq H$ with
$m\in\mathbb{N}$ penalty terms.
Those penalty terms shall be given by symmetric positive semi-definite
bounded bilinear forms $b_i\colon H
\times H \rightarrow \mathbb{R}$ and penalty parameters $\eps_i >0$.
Denoting by $\Vert v \Vert_{c} := \sqrt{ c(v,v) }$ the
semi-norm induced by a symmetric positive semi-definite bilinear form
$c(\cdot,\cdot)$ on $H$ the penalized problem reads:

\begin{problem}\label{prob:abstractPenaltyMinimization}
    Find $u^X_\varepsilon \in X$ minimizing
    \begin{align*}
        \frac{1}{2} a(u^X_\varepsilon,u^X_\varepsilon) - \ell(u^X_\varepsilon)
        + \sum_{i=1}^m \frac{1}{2\eps_i} \Vert u^X_\varepsilon-u\Vert_{b_i}^2.
    \end{align*}
\end{problem}

For the sake of convenience we write
\begin{align}
    \label{eq:penalizedForms}
    a_\eps(\cdot,\cdot) = a(\cdot,\cdot) + \sum_{i=1}^m \frac{1}{\eps_i} b_i(\cdot,\cdot),
    \qquad \ell_\eps(\cdot) = \ell(\cdot) + \sum_{i=1}^m \frac{1}{\eps_i} b_i(u,\cdot)
\end{align}
for $\eps = (\eps_i)_{i=1,\dots,m} \in \mathbb{R}_{+}^m = (0,\infty)^m$.
We require that $a_1(\cdot,\cdot)$ is coercive on $X$,
such that $a_\eps(\cdot,\cdot)$ is also coercive for all $\eps \in (0,1]^m$.
Under these assumptions Lax--Milgram's theorem implies existence of a unique solution
$u^X_\varepsilon \in X$ for any $\eps \in (0,1]^m$, characterized by
the variational equation
\begin{align}\label{eq:abstractPenaltyVariation}
    a_\eps(u^X_\varepsilon, v) = \ell_\eps(v)
    \qquad \forall v \in X.
\end{align}

The following result states a C\'ea-type estimate for
the error $u - u^X_\varepsilon$ resulting from penalization and
discretization in $X$.

\begin{theorem}\label{thm:AbstractError}
    For the fixed $u \in U$ used in the definition
    of $\ell_\eps$ 
    suppose there exist constants $c_i >0$ for $i=1,\dots,m$
    such that
    \begin{align}\label{eq:captureCondition}
        \vert a(u, v) - \ell(v) \vert \leq \sum_{i=1}^m c_i \Vert v\Vert_{b_i}
        \qquad \forall{v\in X}.
    \end{align}
    Then the error
    for the solution
    $u^X_\eps$ of \pref{prob:abstractPenaltyMinimization} can be bounded by
    \begin{align}\label{eq:abstractErrorEstimate}
        \Vert u- u^X_\eps \Vert_{a_\eps}^2 
            \leq 3 \inf_{v\in X} \left( \Vert u-v \Vert_{a_\eps}^2
                    + \sum_{i=1}^m \varepsilon_i c_i^2 \right)\text{.}
    \end{align}
\end{theorem}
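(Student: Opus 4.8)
The plan is to prove a Galerkin-type quasi-optimality estimate whose crucial structural ingredient is that the penalty construction \eqref{eq:penalizedForms} is \emph{consistent} with the exact solution $u$. First I would test the defining relation \eqref{eq:abstractPenaltyVariation} against an arbitrary $w \in X$ and subtract the analogous expression evaluated at $u$. Since $\ell_\eps$ was defined to carry exactly the term $\sum_{i=1}^m \frac{1}{\eps_i} b_i(u,\cdot)$, the penalty contributions $\frac{1}{\eps_i} b_i(u,w)$ appearing in $a_\eps(u,w)$ and in $\ell_\eps(w)$ cancel, leaving the identity
\[
    a_\eps(u - u^X_\eps, w) = a(u,w) - \ell(w) \qquad \forall w \in X.
\]
Thus the abstract consistency error collapses to precisely the residual $a(u,\cdot)-\ell(\cdot)$, which is the quantity controlled by the capture condition \eqref{eq:captureCondition}.

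Second, I would record an elementary comparison of seminorms. Because $a_\eps = a + \sum_{i=1}^m \frac{1}{\eps_i} b_i$ with $a$ and all $b_i$ positive semi-definite, we have $\sum_{i=1}^m \frac{1}{\eps_i}\Vert w \Vert_{b_i}^2 \le \Vert w \Vert_{a_\eps}^2$ for every $w$. Combining \eqref{eq:captureCondition} with a \emph{weighted} Cauchy--Schwarz inequality, writing $c_i \Vert w \Vert_{b_i} = (\sqrt{\eps_i}\, c_i)(\Vert w \Vert_{b_i}/\sqrt{\eps_i})$ before summing, yields
\[
    \bigl\vert a(u,w) - \ell(w) \bigr\vert \le \Bigl( \sum_{i=1}^m \eps_i c_i^2 \Bigr)^{1/2} \Vert w \Vert_{a_\eps} \qquad \forall w \in X.
\]
It is this weighting, rather than a naive triangle-inequality split, that avoids an unwanted factor depending on the number $m$ of penalty terms and produces a dimension-free constant.

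Finally, for arbitrary $v \in X$ I would split $u - u^X_\eps = (u-v) + (v - u^X_\eps)$ and expand $\Vert u - u^X_\eps \Vert_{a_\eps}^2 = a_\eps(u - u^X_\eps, u-v) + a_\eps(u - u^X_\eps, v - u^X_\eps)$. The first term is handled by Cauchy--Schwarz in the $a_\eps$-seminorm, and the second term, since $v - u^X_\eps \in X$, is rewritten via the identity of the first paragraph and bounded by the inequality of the second. Abbreviating $E = \Vert u - u^X_\eps \Vert_{a_\eps}$, $D = \Vert u-v \Vert_{a_\eps}$ and $C = (\sum_i \eps_i c_i^2)^{1/2}$, and using $\Vert v - u^X_\eps \Vert_{a_\eps} \le E + D$, this produces a quadratic inequality of the form $E^2 \le E(D+C) + CD$. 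Two applications of Young's inequality absorb the $E^2$-term on the right and collapse the mixed terms to give $E^2 \le 3(D^2 + C^2)$; taking the infimum over $v \in X$ is exactly \eqref{eq:abstractErrorEstimate}.

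The main conceptual step is the consistency identity of the first paragraph: recognizing that the definition of $\ell_\eps$ makes $u$ an exact solution of the penalized variational problem up to the genuine residual $a(u,\cdot)-\ell(\cdot)$. The only quantitative subtlety is bookkeeping of constants so that the final factor is exactly $3$, and the weighted Cauchy--Schwarz of the second paragraph is what keeps that constant independent of $m$.
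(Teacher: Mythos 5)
Your proposal is correct and follows essentially the same route as the paper: the same consistency identity $a_\eps(u-u^X_\eps,w)=a(u,w)-\ell(w)$ on $X$, the same splitting of $\Vert u-u^X_\eps\Vert_{a_\eps}^2$ into $a_\eps(e,u-v)+a_\eps(e,v-u^X_\eps)$, and Young's inequality to absorb the error term. The only difference is bookkeeping — you apply a weighted Cauchy--Schwarz across the penalty terms before the triangle inequality, whereas the paper splits $\Vert v-u^X_\eps\Vert_{b_i}$ term by term first — and both land on the same constant $3$.
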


\begin{proof}
    Let $e = u - u^X_\varepsilon$. Then we get
    \begin{align}\label{eq:helper4}
        \Vert e \Vert_{a_\eps}^2 = a_\eps(e,e) = a_\eps(e,u-v) + a_\eps(e,v-u^X_\varepsilon)
    \end{align}
    for all $v\in X$.
    Using Young's inequality we can bound the first term by
    \begin{align}\label{eq:helper2}
        \begin{aligned}
            a_\eps(e,u-v)
                &\leq \Vert e\Vert_{a_\eps} \, \Vert u - v\Vert_{a_\eps}
            \leq \frac{1}{4} \Vert e\Vert_{a_\eps}^2 + \Vert u - v\Vert_{a_\eps}^2.
        \end{aligned}
    \end{align}
    The definition \eqref{eq:penalizedForms} of
    the $u$-dependent penalized functional $\ell_\eps$
    yields
    \begin{align*}
        a_\eps(u,w) - a(u,w) & = \ell_\eps(w) - \ell(w)
        \qquad \forall{w \in X}.
    \end{align*}
    Combining this identity with
    \eqref{eq:abstractPenaltyVariation} we get
    \begin{align}\label{eq:helper1}
        a_\eps(e,w) = a(u,w) - \ell(w)
        \qquad \forall{w \in X}.
    \end{align}
    Because of $v - u^X_\varepsilon \in X$ this implies
    \begin{align}\label{eq:helper3}
        \begin{aligned}
            a_\eps(e,v-u^X_\varepsilon) & = a(u,v-u^X_\varepsilon) - \ell(v-u^X_\varepsilon)
            \\ & \leq \sum_{i=1}^m c_i \Vert v - u^X_\varepsilon\Vert_{b_i}
            \\ & \leq \sum_{i=1}^m c_i \left( \Vert e\Vert_{b_i} + \Vert u - v \Vert_{b_i} \right)
            \\ & \leq \sum_{i=1}^m \Bigl( \frac{1}{4\varepsilon_i} \Vert e\Vert_{b_i}^2 + \frac{1}{2\varepsilon_i} \Vert u - v\Vert_{b_i}^2 + \frac{3}{2}\varepsilon_i c_i^2\Bigr)
            \\ & \leq \frac{1}{4}\Vert e\Vert_{a_\eps}^2 + \frac{1}{2}\Vert u-v\Vert_{a_\eps}^2 + \sum_{i=1}^m \frac{3}{2}\varepsilon_i c_i^2
        \end{aligned}
    \end{align}
    where the second to last inequality follows from Young's inequality.

    Inserting the estimates \eqref{eq:helper2} and \eqref{eq:helper3} into \eqref{eq:helper4}
    we obtain
    \begin{align*}
        \Vert e\Vert_{a_\eps}^2
        \leq \frac{1}{2}\Vert e\Vert_{a_\eps}^2 + \frac{3}{2}\Vert u-v\Vert_{a_\eps}^2 + \sum_{i=1}^m \frac{3}{2}\varepsilon_i c_i^2
    \end{align*}
    which proves the assertion.
\end{proof}



Next we consider the case where the evaluation of
$a(\cdot,\cdot)$, $\ell(\cdot)$, and $b_i(\cdot,\cdot)$
is not performed exactly but approximated by some $\wt{a}$,
$\wt{\ell}$ and $\wt{b}_i$, respectively.
We use a notation analogous to $a_\eps$ and $\ell_\eps$:
\begin{align*}
    \wt{a}_\eps(\cdot,\cdot) = \wt{a}(\cdot,\cdot) + \sum_{i=1}^m \frac{1}{\eps_i} \wt{b}_i(\cdot,\cdot)
    , \qquad \wt{\ell}_\eps(\cdot) = \wt{\ell}(\cdot) + \sum_{i=1}^m \frac{1}{\eps_i} \wt{b}_i(u,\cdot)
\end{align*}
Instead of solving \eqref{eq:abstractPenaltyVariation} for $u^X_\eps$ directly one
now computes a $\wt{u}^X_\eps$ by solving
\begin{align}\label{eq:abstractInexactPenaltyVariation}
    \wt{a}_\eps(\wt{u}^X_\eps, v ) = \wt{\ell}_\eps(v)
    \qquad \forall{v \in X} \text{.}
\end{align}
In this setting we can prove the following Strang-type result:

\begin{proposition}\label{prop:strang}
    Let $\norm{a}$ be the continuity constant of $a$ with respect to the $H$-norm and let
    $a_1 = a+\sum_{i=1}^m b_i$ and $\wt{a}_1 = \wt{a} +\sum_{i=1}^m \wt{b}_i$ be coercive
    with respect to the constants $\alpha$ and $\wt{\alpha}$, respectively.
    Additionally to the assumptions of Theorem~\ref{thm:AbstractError}
    suppose that for all $i\in\{1,\dots,m\}$ there exists $c_i \in \mathbb{R}$ such that for all $v \in X$
    \begin{align}\label{eq:helper1611061548}
        \abs{ b_i(v,v) - \wt{b}_i(v,v) } \leq c_i \Vert v \Vert^2\text{.}
    \end{align}
    Then there exists a constant $C = C(\alpha,\wt{\alpha},\norm{a}) >0$ such that
    \begin{align*}
        \norm{u - \wt{u}^X_\eps}_{a_\eps}
        & \leq C \left( 1 + \sum_{i=1}^m \frac{c_i}{\eps_i} \right) \inf_{v\in X} \sup_{w \in X} \left[  \norm{u - v}_{a_\eps} + \frac{1}{\norm{w}} \abs{(a-\wt{a})(v,w)}   \phantom{\left(\frac{1}{\norm{w}}{\sum_{i=1}^m\abs{(b_i-\wt{b}_i)}}\right)}\right.
        \\ & \left. \qquad \qquad + \frac{1}{\norm{w}}\sum_{i=1}^m \frac{1}{\eps_i}\abs{(b_i-\wt{b}_i)(u-v,w)} + \frac{1}{\norm{w}}\abs{(\ell-\wt{\ell})(w)} \right ].
    \end{align*}
\end{proposition}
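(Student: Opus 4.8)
The plan is to run a Strang-type argument, comparing the inexact penalty solution $\wt u^X_\eps$ with the exact solution $u$ through an arbitrary intermediate $v\in X$, and to control the discrete remainder by coercivity together with the perturbed variational equation \eqref{eq:abstractInexactPenaltyVariation}. First I would use the triangle inequality
\[
  \norm{u-\wt u^X_\eps}_{a_\eps}\le \norm{u-v}_{a_\eps}+\norm{v-\wt u^X_\eps}_{a_\eps}
  \qquad\forall v\in X,
\]
so that the first summand already reproduces the approximation term of the claim and it remains to estimate $\phi:=v-\wt u^X_\eps\in X$.

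Since $\wt u^X_\eps$ is only characterized through the perturbed form $\wt a_\eps$, I would first pass from the exact energy norm to the perturbed one on $X$. Splitting $a_\eps(\phi,\phi)=a(\phi,\phi)+\sum_{i}\tfrac1{\eps_i}b_i(\phi,\phi)$, bounding $a(\phi,\phi)\le\norm{a}\norm{\phi}^2$ by continuity, estimating $b_i(\phi,\phi)\le\wt b_i(\phi,\phi)+c_i\norm{\phi}^2$ by \eqref{eq:helper1611061548}, and using the semi-definiteness of the perturbed forms to absorb $\sum_i\tfrac1{\eps_i}\wt b_i(\phi,\phi)\le\wt a_\eps(\phi,\phi)$, I obtain
\[
  a_\eps(\phi,\phi)\le \wt a_\eps(\phi,\phi)+\Bigl(\norm{a}+\sum_i\tfrac{c_i}{\eps_i}\Bigr)\norm{\phi}^2.
\]
Combined with the coercivity $\wt\alpha\norm{\phi}^2\le\wt a_\eps(\phi,\phi)$ this yields an equivalence $\norm{\phi}_{a_\eps}\le C(1+\sum_i c_i/\eps_i)^{1/2}\norm{\phi}_{\wt a_\eps}$ with $C=C(\norm{a},\wt\alpha)$. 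This is exactly the mechanism that produces the prefactor $(1+\sum_i c_i/\eps_i)$ in the claim, and the point is that it never requires comparing $a$ with $\wt a$ directly: the $a$-contribution is discarded by continuity and re-absorbed through $\wt\alpha$-coercivity.

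It then remains to bound $\norm{\phi}_{\wt a_\eps}$. As $\wt a_\eps$ is a genuine inner product on $X$, I would use its dual representation $\norm{\phi}_{\wt a_\eps}=\sup_{w\in X}\wt a_\eps(\phi,w)/\norm{w}_{\wt a_\eps}$ and expand the numerator via \eqref{eq:abstractInexactPenaltyVariation} as $\wt a_\eps(\phi,w)=\wt a_\eps(v,w)-\wt\ell_\eps(w)$. Inserting the exact forms together with the identity $a_\eps(u,w)-\ell_\eps(w)=a(u,w)-\ell(w)$, which is immediate from the definitions \eqref{eq:penalizedForms}, decomposes the numerator into $a_\eps(u-v,w)$, the form inconsistency $(a-\wt a)(v,w)$, the load inconsistency $(\ell-\wt\ell)(w)$, the residual $a(u,w)-\ell(w)$, and --- after combining the penalty perturbation stemming from $\wt a_\eps$ with the one hidden in the $u$-dependent load $\wt\ell_\eps$ --- the single term $\sum_i\tfrac1{\eps_i}(b_i-\wt b_i)(u-v,w)$. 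Dividing by $\norm{w}_{\wt a_\eps}$ and using $\norm{w}_{\wt a_\eps}\ge\sqrt{\wt\alpha}\norm{w}$ turns the inconsistencies into the $1/\norm{w}$-weighted terms of the claim, while $a_\eps(u-v,w)\le\norm{u-v}_{a_\eps}\norm{w}_{a_\eps}$ reproduces the approximation term once the ratio $\norm{w}_{a_\eps}/\norm{w}_{\wt a_\eps}$ is bounded by the same equivalence factor from the previous step.

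The step I expect to be most delicate is the bookkeeping of the $\eps$-powers, so that the $1/\eps_i$-amplification of the penalty perturbation is captured exactly by the stated prefactor; in particular the cancellation that merges the two sources of $b_i-\wt b_i$ error (the bilinear form in $\wt a_\eps$ and the $u$-dependent penalty load in $\wt\ell_\eps$) into the clean expression $(b_i-\wt b_i)(u-v,w)$ relies on the precise structure of \eqref{eq:penalizedForms}. A secondary technical point is the residual $a(u,w)-\ell(w)$, which appears because $u\notin X$: it is reabsorbed by means of the capture condition \eqref{eq:captureCondition} of Theorem~\ref{thm:AbstractError} (equivalently, by writing $a(u,w)-\ell(w)=a_\eps(u-u^X_\eps,w)$ and invoking the penalization estimate \eqref{eq:abstractErrorEstimate}), which is precisely why the hypotheses of that theorem are carried over into the present statement.
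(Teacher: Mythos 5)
Your proposal follows essentially the same route as the paper's proof: a triangle inequality through an arbitrary $v\in X$, a norm equivalence $\norm{\cdot}_{a_\eps}\lesssim\bigl(1+\sum_i c_i/\eps_i\bigr)^{1/2}\norm{\cdot}_{\wt{a}_\eps}$ on $X$ obtained from \eqref{eq:helper1611061548} together with the continuity of $a$ and the coercivity of $\wt{a}_\eps$ (this is the paper's inequality \eqref{eq:helper1611061601}), and then an expansion of $\wt{a}_\eps(v-\wt{u}^X_\eps,\cdot)$ via \eqref{eq:abstractInexactPenaltyVariation} into the approximation term $a_\eps(u-v,\cdot)$ plus the consistency terms $(\wt{a}_\eps-a_\eps)(v,\cdot)$ and $(\ell_\eps-\wt{\ell}_\eps)(\cdot)$, whose penalty parts recombine into $\sum_i\eps_i^{-1}(b_i-\wt{b}_i)(u-v,\cdot)$ exactly as you describe. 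Your use of the dual representation $\norm{\phi}_{\wt{a}_\eps}=\sup_{w}\wt{a}_\eps(\phi,w)/\norm{w}_{\wt{a}_\eps}$ is only a cosmetic variant of the paper's device of testing with $\wt{w}=v-\wt{u}^X_\eps$ itself and passing to the supremum over $w\in X$ at the very end.

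The one substantive divergence is your treatment of the residual $a(u,w)-\ell(w)=a_\eps(u,w)-\ell_\eps(w)$. The paper's decomposition inserts $\ell_\eps(\wt{w})-a_\eps(u,\wt{w})$ as if it vanished for $\wt{w}\in X$, so no residual term survives into its final bound; this is why the stated right-hand side contains no consistency term for $u$. You instead keep the residual and propose to control it by the capture condition \eqref{eq:captureCondition}. That is the more careful bookkeeping, but be aware that it does not land exactly on the proposition as stated: the capture condition yields $\abs{a(u,w)-\ell(w)}\le\sum_i c_i\norm{w}_{b_i}\le\sum_i\sqrt{\eps_i}\,c_i\,\norm{w}_{a_\eps}$ (with the $c_i$ of \eqref{eq:captureCondition}, which are different constants from those in \eqref{eq:helper1611061548} despite the shared notation), and after dividing by $\norm{w}_{\wt{a}_\eps}$ this leaves an additional additive contribution of order $\sum_i\sqrt{\eps_i}\,c_i$ that is absent from the claimed estimate. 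So you must either accept this extra term --- harmless in the intended application, since it is exactly the penalization error already present in \eqref{eq:abstractErrorEstimate} --- or discard the residual as the paper's proof does. Everything else in your outline matches the paper's argument step for step.
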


\begin{proof}
    See appendix
\end{proof}

\newcommand{\remove}[1]{}
\remove{
\snote{
In the special case that exact evaluation of the penalty terms is not possible and one instead approximates the solution of \eqref{eq:abstractPenaltyVariation} by solving
\begin{align}\label{eq:abstractInexactPenaltyVariation}
    \forall{v \in X\colon} \ a( \wt{u}^X_\eps, v ) + \sum_{i=1}^m \frac{1}{\eps_i} \wt{b}_i( \wt{u}^X_\eps - u^\ast, v ) = \ell(v)\text{.}
\end{align}
For this situation we can prove a simple error estimate.

\begin{proposition}
    Let $\wt{u}^X_\eps$ be the solution of \eqref{eq:abstractInexactPenaltyVariation}.
    If there exists $c \in \mathbb{R}_{>0}$ such that for all $i \in \{1,\dots,m\}$ and $v \in X$
    \begin{align*}
        (\wt{b}_i-b)(u^X_{\eps} - u^\ast, v ) \leq c \Vert u^X_\eps - u^\ast\Vert_{b_i} \, \Vert v \Vert_{\wt{b}_i}
    \end{align*}
    holds, then
    \begin{align*}
        \Vert u^X_{\eps} - \wt{u}^X_{\eps}\Vert_{a} \leq c \Vert u^\ast - u^X_\eps \Vert_{a'}
    \end{align*}
    where $a' = a + \sum_{i=1}^m \frac{1}{\eps_i}b_i$.
\end{proposition}

\begin{proof}
    Let $\wt{a} = a + \sum_{i=1}^m \frac{1}{\eps_i} \wt{b}_i$.
    For brevity let $u := u^X_\eps$ and $\wt{u} := \wt{u}^X_\eps$.
    Using the variational equalities \eqref{eq:abstractPenaltyVariation}, \eqref{eq:abstractInexactPenaltyVariation} and the assumptions on the $\wt{b}_i$ we conclude
    \begin{align*}
        \wt{a}\left( u - \wt{u}, u - \wt{u} \right)
        & = a( u, u - \wt{u} ) + \sum_{i=1}^m \frac{1}{\eps_i} b_i( u - u^\ast, u - \wt{u} )
        \\ & \qquad - a( \wt{u}, u - \wt{u} ) - \sum_{i=1}^m \frac{1}{\eps_i} \wt{b}_i ( \wt{u} - u^\ast, u - \wt{u} )
            \\ & \qquad + \sum_{i=1}^m \frac{1}{\eps_i} (\wt{b}_i - b )( u - u^\ast, u - \wt{u} )
        \\ & = \sum_{i=1}^m (\wt{b}_i - b )( u - u^\ast, u - \wt{u} )
        \\ & \leq c \sum_{i=1}^m \Vert u - u^\ast\Vert_{b_i} \, \Vert u - \wt{u} \Vert_{\wt{b}_i}
        \\ & \leq \sum_{i=1}^m \frac{c^2}{4\eps_i} \Vert u - u^\ast\Vert_{b_i}^2
                + \sum_{i=1}^m \frac{1}{\eps_i} \Vert u - \wt{u} \Vert_{\wt{b}_i}\text{.}
    \end{align*}
    Substracting the second summand from the inequality immediately yields
    \begin{align*}
        \Vert u^X_{\eps} - \wt{u}^X_{\eps} \Vert_a \leq \frac{c}{2} \sum_{i=1}^m \frac{1}{\sqrt{\eps_i}} \Vert u^\ast - u^X_\eps \Vert_{b_i}
        \leq c \Vert u^\ast - u^X_\eps \Vert_{a'}\text{.}
    \end{align*}
\end{proof}
}
}

\section{Regularity of the Hard Curve Constraint Problem}\label{sec:regularity}
In order to apply the results of the previous section it will be crucial to
prove \eqref{eq:captureCondition}. For this purpose and in order to prove
convergence rates by bounding the best approximation errors in Theorem~\ref{thm:AbstractError}
we will now derive regularity results for the solution of \pref{prob:optimHardCurve}.

Let $u \in V_f$ be the solution of the hard curve minimization
formulation, \pref{prob:optimHardCurve}.
Knowing the regularity of $u$ is central to proving discretization errors
since the maximal regularity of the solution immediately reveals the optimal
rates of convergence that one would expect for the corresponding discretization
errors.

Our strategy is to rewrite the hard curve minimization problem as a system of
elliptic partial differential equations to which we apply standard regularity
theory for elliptic interface problems.
To this end we define the fourth order elliptic differential operator
$L := \kappa \Delta^2 - \sigma \Delta$ associated with the energy functional $\cJ$.

\begin{proposition}\label{prop:weakHardCurve}
    Suppose $(u_1,u_2) \in H^2(\Omega_B) \times H^2(B)$ is a weak solution of the system of PDEs
    \begin{align}\label{eq:pdeHardCurve}
        \begin{aligned}
            Lu_1 & = 0 \text{ on $\Omega_B$,}
            & T_\Gamma u_1 & = T_\Gamma u \text{ on $\Gamma$,}
            & u_1 = \partial_\nu u_1 = 0 \text{ on $\partial\Omega$,}
            \\ Lu_2 & = 0 \text{ on $B$,}
            & T_\Gamma u_2 & = T_\Gamma u \text{ on $\Gamma$.}
        \end{aligned}
    \end{align}
    Then $u_1 = u|_{\Omega_B}$ and $u_2 = u|_B$ where $u$ is the solution of \pref{prob:optimHardCurve}.
    Conversely, if $u$ is the solution of \pref{prob:optimHardCurve},
    then $(u|_{\Omega_B}, u|_B)$ solves \eqref{eq:pdeHardCurve}.
\end{proposition}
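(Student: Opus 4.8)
The plan is to exploit that the trace data on the right-hand sides of \eqref{eq:pdeHardCurve} are prescribed by the \emph{fixed} function $T_\Gamma u$, where $u$ denotes the solution of \pref{prob:optimHardCurve} (which exists and is unique by Lax--Milgram). Because this datum is fixed, the apparently coupled system \eqref{eq:pdeHardCurve} in fact decouples into two independent Dirichlet problems for $L = \kappa\Delta^2 - \sigma\Delta$: one for $u_1$ on $\Omega_B$ and one for $u_2$ on $B$. The proof then rests on two claims: (i) the pair of restrictions $(u|_{\Omega_B}, u|_B)$ is a weak solution of \eqref{eq:pdeHardCurve}; and (ii) each decoupled problem has a unique weak solution. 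Both assertions of the proposition follow at once, since by (ii) any weak solution must equal the one produced in (i).

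For claim (i) I would first note that, since $u \in H_0^2(\Omega)$, the restrictions lie in $H^2(\Omega_B)$ and $H^2(B)$ (using the $C^{1,1}$-regularity of $\Gamma$), and that the conditions $u|_{\partial\Omega} = \partial_\nu u|_{\partial\Omega} = 0$ and $T_\Gamma(u|_{\Omega_B}) = T_\Gamma u = T_\Gamma(u|_B)$ are immediate, because $u$ is a single $H^2(\Omega)$-function whose one-sided traces on $\Gamma$ coincide. The weak equations $Lu_1 = 0$ and $Lu_2 = 0$ then follow from the Euler--Lagrange characterization $a(u,v) = 0$ for all $v$ in the tangent space $V_0 := \{v \in H_0^2(\Omega) \mid P_\Gamma T_\Gamma v = 0\}$ of $V_f$, where $a(v,w) = \int_\Omega \kappa\,\Delta v\,\Delta w + \sigma\,\nabla v\cdot\nabla w\,\mathrm{d}x$. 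Indeed, the extension by zero $\tilde\varphi$ of any $\varphi \in H_0^2(\Omega_B)$ satisfies $T_\Gamma\tilde\varphi = 0$, hence lies in $V_0$; inserting it and using $\tilde\varphi = 0$ on $B$ gives the weak form of $Lu_1 = 0$ on $\Omega_B$, and the same argument with $\varphi \in H_0^2(B)$ gives $Lu_2 = 0$ on $B$.

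For claim (ii), each decoupled problem is exactly the optimality condition for minimizing $\cJ_D(v) = \int_D \tfrac12\kappa(\Delta v)^2 + \tfrac12\sigma\vert\nabla v\vert^2\,\mathrm{d}x$ over the affine space of $H^2(D)$-functions carrying the prescribed essential boundary data, with $D = \Omega_B$ or, componentwise, $D = B_i$. These affine spaces are nonempty, as the restrictions from (i) belong to them, and the associated bilinear form $a_D$ is coercive on the tangent space $H_0^2(D)$: for $v \in H_0^2(D)$ one has $\Vert\Delta v\Vert_{L^2(D)}^2 = \vert v\vert_{H^2(D)}^2$, so the biharmonic term alone controls the full $H^2$-seminorm, which a Poincar\'e inequality upgrades to $H^2$-coercivity; note this requires only $\kappa > 0$ and remains valid for $\sigma = 0$. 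Lax--Milgram then furnishes a unique weak solution on each subdomain, and hence of the whole system.

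The step I would flag as the main pitfall is the temptation to prove the converse instead by gluing $u_1, u_2$ into a single $w \in H_0^2(\Omega)$ and re-deriving $a(w,v) = 0$ for all $v \in V_0$. Integrating by parts on each subdomain, the matching normal derivatives eliminate most interface contributions but leave the residual term $\kappa\sum_i c_i\int_{\partial B_i}(\partial_\nu\Delta u_2 - \partial_\nu\Delta u_1)\,\mathrm{d}s$, where the $c_i$ are the free boundary constants of $v$; this is precisely the natural boundary condition associated with the free averages $\gamma_i$, and showing it vanishes is genuinely extra work. Routing the argument through decoupling and uniqueness avoids this term altogether, leaving the coercivity of the subproblems as the only estimate requiring care.
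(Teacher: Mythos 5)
Your argument is correct, but it is not comparable line-by-line with the paper, because the paper does not actually prove this proposition: it simply cites \cite[Proposition~4.1]{ElGrHoKoWo15}, where the equivalence is established by the standard variational route (Euler--Lagrange equation plus integration by parts, with the interface jump relations appearing as natural conditions). Your route is a clean, self-contained alternative that deliberately sidesteps the integration-by-parts bookkeeping: you observe that since the boundary datum $T_\Gamma u$ is fixed, the system decouples into two independent Dirichlet problems for $L$; you verify that $(u|_{\Omega_B}, u|_B)$ solves them by testing $a(u,\cdot)=0$ with zero-extensions of $H_0^2(\Omega_B)$- and $H_0^2(B)$-functions (which do lie in $V_0$ since their full trace on $\Gamma$ vanishes); and you conclude both directions from uniqueness of each subdomain problem. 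The uniqueness step is where the only real care is needed, and you handle it correctly: the difference of two weak solutions has vanishing trace and normal derivative on all of $\partial\Omega_B$ (resp.\ $\partial B_i$), which for Lipschitz subdomains identifies it as an element of $H_0^2$, where $\|\Delta v\|_{L^2}^2 = |v|_{H^2}^2$ and Poincar\'e give coercivity of $a_D$ even for $\sigma = 0$. What your approach buys is exactly what you say in your final paragraph: the converse direction never requires gluing $u_1,u_2$ and re-deriving $a(w,v)=0$ on $V_0$, so the residual interface term associated with the free averages $\gamma_i$ never has to be shown to vanish. The trade-off is that the reference's proof makes the natural interface conditions explicit (which the present paper later exploits in Lemma~\ref{lem:captureConditionSoftCurve}), whereas yours leaves them implicit.
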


\begin{proof}
    A proof for the equivalence of \pref{prob:optimHardCurve} with a weak formulation of
    \eqref{eq:pdeHardCurve} is given in \cite[Proposition 4.1]{ElGrHoKoWo15}.
\end{proof}


\begin{lemma}
    \label{lem:piecewiseRegularity}
    Let $\Omega$ be a piecewise polygonal domain, let $\Gamma$ be smooth and suppose $(f_1,f_2) \in H^{\frac{7}{2}}(\Gamma)\times H^{\frac{5}{2}}(\Gamma)$.
    If all corners of $\Omega$ have an inner angle $\omega$ with $\omega \leq 126^\circ$,
    then $(u|_{\Omega_B},u|_B) \in H^{4}(\Omega_B)\times H^{4}(B)$.
\end{lemma}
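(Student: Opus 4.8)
The plan is to reduce \pref{prob:optimHardCurve} to the decoupled elliptic system \eqref{eq:pdeHardCurve} and then apply elliptic regularity separately on $B$ and on $\Omega_B$, the only delicate point being the corners of $\partial\Omega$. By \cref{prop:weakHardCurve} the pair $(u|_{\Omega_B}, u|_B)$ solves \eqref{eq:pdeHardCurve}, so it suffices to analyse the two fourth-order boundary value problems for $L = \kappa\Delta^2 - \sigma\Delta$ individually; they couple only through the common Dirichlet datum $T_\Gamma u$ on $\Gamma$. First I would pin down the regularity of this datum. Since $u$ solves \pref{prob:optimHardCurve}, the constraint $P_\Gamma(T_\Gamma u - f) = 0$ gives $\partial_\nu u|_\Gamma = f_2$ and $u|_\Gamma = f_1 + \sum_{i=1}^k \gamma_i\eta_i$ with constants $\gamma_i$. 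As each $\eta_i = \chi_{\partial B_i}$ is constant on the smooth component $\partial B_i$ and vanishes on the others, $\sum_i\gamma_i\eta_i$ is smooth on $\Gamma$, so by the hypothesis on $f$ we obtain $T_\Gamma u \in H^{\frac{7}{2}}(\Gamma)\times H^{\frac{5}{2}}(\Gamma)$. These are precisely the trace spaces of $H^4$, i.e.\ the data sit at exactly the level needed for an $H^4$ solution.

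Next I would dispatch the easy regions. On $B$ the boundary $\Gamma$ is smooth, the right-hand side $Lu_2 = 0$ lies in $L^2(B)$, and the clamped conditions $(v, \partial_\nu v)$ satisfy the complementing condition for the properly elliptic operator $L$; hence the shift theorem for fourth-order elliptic problems on smooth domains (e.g.\ \cite{Grisvard85}) yields $u_2 \in H^4(B)$. The same interior and smooth-boundary estimates give $H^4$-regularity of $u_1$ in the interior of $\Omega_B$, in a neighbourhood of $\Gamma$ (smooth interface, $H^{\frac{7}{2}}\times H^{\frac{5}{2}}$ data), and near every smooth piece of $\partial\Omega$ (homogeneous clamped data, which are trivially compatible). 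Because $\Gamma$ lies in the interior of $\Omega$ and is therefore disjoint from $\partial\Omega$, these local pictures do not interact, and what remains is purely the corner behaviour of $u_1$.

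The corners of $\partial\Omega$ are the main obstacle. There I would invoke the Kondrat'ev--Grisvard theory for elliptic problems on domains with conical points: near a corner of opening $\omega$ the solution splits into a regular $H^4$-part and a finite combination of corner singularities $r^\lambda\phi(\theta)$, where the admissible exponents $\lambda$ are the roots of the characteristic equation of the clamped biharmonic operator in the sector,
\begin{align*}
    \sin^2\bigl((\lambda-1)\omega\bigr) = (\lambda-1)^2\sin^2\omega ,
\end{align*}
the lower-order term $-\sigma\Delta$ not entering the principal part $\kappa\Delta^2$ and hence not affecting which exponents lie in the critical strip. Since in two dimensions $r^\lambda\phi(\theta) \in H^4$ exactly when $\operatorname{Re}\lambda > 3$, the solution is $H^4$ near the corner as soon as no admissible exponent satisfies $1 < \operatorname{Re}\lambda \le 3$. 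In the regime of interest these exponents are in fact complex, and the analysis of Blum and Rannacher for the clamped plate (see also \cite{Grisvard85}) shows that the smallest admissible value of $\operatorname{Re}\lambda$ stays above $3$ for all $\omega < \omega^\ast$ and first reaches $3$ at the critical angle $\omega^\ast \approx 126.28^\circ$. As every corner of $\Omega$ satisfies $\omega \le 126^\circ < \omega^\ast$ by hypothesis, all corner exponents obey $\operatorname{Re}\lambda > 3$, the singular contributions already lie in $H^4$, and therefore $u_1 \in H^4(\Omega_B)$.

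Collecting the two subdomain results gives $(u|_{\Omega_B}, u|_B) \in H^4(\Omega_B)\times H^4(B)$, as claimed. The only genuinely non-standard ingredient is the corner estimate together with the quantitative threshold $\omega^\ast$; the remaining pieces are classical elliptic regularity, tied together by the decoupling of \cref{prop:weakHardCurve} and by the separation of the smooth interface $\Gamma$ from the polygonal boundary $\partial\Omega$. The bound $126^\circ$ is precisely the price the fourth-order operator $L$ pays, under clamped boundary conditions, compared with the angle $180^\circ$ that would suffice for a second-order problem.
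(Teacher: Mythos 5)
Your proposal is correct and follows essentially the same route as the paper: decouple via \cref{prop:weakHardCurve}, note that the Dirichlet data $T_\Gamma u=(f_1+\sum_i\gamma_i\eta_i,\,f_2)$ lie in $H^{7/2}(\Gamma)\times H^{5/2}(\Gamma)$, reduce to the biharmonic operator by absorbing the lower-order term (the paper does this explicitly by setting $g=\tfrac{\sigma}{\kappa}\Delta u\in L^2$ and solving $\Delta^2 u=g$, you do it by remarking that $-\sigma\Delta$ does not affect the principal corner exponents), and then invoke Grisvard's polygonal-domain regularity theorem together with the Blum--Rannacher analysis of the characteristic equation and the critical angle $\omega^\ast\approx 126.28^\circ$. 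Your write-up is merely more detailed about the smooth-boundary and interface cases, which the paper subsumes in its single citation.
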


\begin{proof}
    Let $g := \frac{\sigma}{\kappa} \Delta u|_{\Omega_B} \in L^2(\Omega_B)$.
    From Proposition~\ref{prop:weakHardCurve} we know that $u|_{\Omega_B}$ is a weak solution of
    \begin{align*}
        \Delta^2 u = g \text{ on $\Omega_B$},
        \qquad T_{\Gamma}u = T_{\Gamma} u|_{\Omega_B} \text{ on $\Gamma$},
        \qquad u = \partial_\nu u = 0 \text{ on $\partial\Omega$.}
    \end{align*}
    Since $g \in L^2(\Omega_B)$ and
    $T_{\Gamma}^1 u|_{\Omega_B} = f_1 + \sum_{i=1}^k \vert \partial B_i\vert^{-1} ( u|_{\Omega_B} - f_1, \eta_i)_{L^2(\Gamma)}\eta_i \in H^{\frac{7}{2}}(\Gamma)$
    as well as $T_{\Gamma}^2 u|_{\Omega_B} = f_2 \in H^{\frac{5}{2}}(\Gamma)$
    it follows from \cite[Theorem 7.2.2.3]{Grisvard85} and the computations in
    \cite{BlumRannacher80} on the associated characteristic equation that
    $u|_{\Omega_B} \in H^4(\Omega_B)$.
    The analogue argumentation on $B$ yields $u|_B \in H^4(B)$ and thus proves the assertion.
\end{proof}


Having the intrinsic regularity $u \in H^2(\Omega)$ and the piecewise regularity
from Lemma~\ref{lem:piecewiseRegularity} allows to show an improved global regularity result.
The key ingredient is the following technical Lemma.

\begin{lemma}\label{lem:gluing}
    Let $v \in H^2(\Omega)$ such that $v|_{\Omega_B} \in H^4(\Omega_B)$ and $v|_{B} \in H^4(B)$.
    Then $v \in H^{2+\frac{1}{2}-\delta}(\Omega)$ for all $\delta > 0$.
\end{lemma}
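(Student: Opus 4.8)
The plan is to reduce the claim to a statement about the second-order derivatives of $v$ and then to a classical gluing estimate across the interface $\Gamma$. Since $H^{5/2-\delta}(\Omega) \supseteq H^{5/2-\delta'}(\Omega)$ whenever $\delta \geq \delta'$, and since for $\delta \geq \tfrac12$ the assertion follows already from $v \in H^2(\Omega)$, it suffices to treat $\delta \in (0,\tfrac12)$ and set $\sigma := \tfrac12 - \delta \in (0,1)$. On the bounded Lipschitz domain $\Omega$ I would use the equivalent characterization of the fractional-order space, namely that $v \in H^{2+\sigma}(\Omega)$ if and only if $v \in H^2(\Omega)$ and every second-order weak derivative $D^\alpha v$, $|\alpha| = 2$, has finite Gagliardo seminorm $|D^\alpha v|_{H^\sigma(\Omega)}$. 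Thus the whole problem is reduced to showing $D^\alpha v \in H^\sigma(\Omega)$ for each $|\alpha| = 2$.

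Next I would exploit the hypotheses to pin down the structure of $g := D^\alpha v$. Because $v \in H^2(\Omega)$, each such $g$ is a genuine function in $L^2(\Omega)$, so that the global weak derivative carries no singular contribution along $\Gamma$. Restricting to the open sets $\Omega_B$ and $B$, the weak derivative $g$ agrees with the classical second derivative of $v|_{\Omega_B} \in H^4(\Omega_B)$ and of $v|_B \in H^4(B)$, so that $g|_{\Omega_B} \in H^2(\Omega_B)$ and $g|_B \in H^2(B)$. In particular $g$ has one-sided traces in $H^{1/2}(\Gamma)$ from either side of the $C^{1,1}$ interface $\Gamma$, but these traces need not coincide: $g$ may jump across $\Gamma$. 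This is exactly the role of the global $H^2$ hypothesis --- it guarantees that the only possible obstruction to higher regularity of $g$ is a jump across $\Gamma$, with no worse singularity.

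It therefore remains to prove the gluing estimate: if $g \in L^2(\Omega)$ with $g|_{\Omega_B} \in H^1(\Omega_B)$ and $g|_B \in H^1(B)$, then $g \in H^\sigma(\Omega)$ for every $\sigma < \tfrac12$. I would estimate the Gagliardo seminorm by splitting $\Omega \times \Omega$ into the pieces $\Omega_B \times \Omega_B$, $B \times B$, and the two symmetric cross terms $\Omega_B \times B$. The two diagonal pieces are bounded by $|g|_{H^\sigma(\Omega_B)}^2 + |g|_{H^\sigma(B)}^2$, which are finite because $H^1 \hookrightarrow H^\sigma$ on each subdomain. For the cross term one bounds $|g(x) - g(y)|^2 \leq 2|g(x)|^2 + 2|g(y)|^2$; fixing $x \in \Omega_B$ and integrating $|x-y|^{-(n+2\sigma)}$ over $y \in B$ produces a factor $\lesssim d(x)^{-2\sigma}$, where $d(x) = \mathrm{dist}(x,\Gamma)$. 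Introducing normal and tangential coordinates in a tubular neighborhood of $\Gamma$ (available since $\Gamma \in C^{1,1}$) and using the uniform $L^2(\Gamma)$-bound of $g$ on surfaces parallel to $\Gamma$ (a consequence of $g \in H^1$ on each side), the remaining one-dimensional integral $\int_0^{t_0} t^{-2\sigma}\,dt$ converges precisely because $2\sigma < 1$. This yields $g \in H^\sigma(\Omega)$, hence $D^\alpha v \in H^{1/2-\delta}(\Omega)$ for all $|\alpha|=2$, and therefore $v \in H^{5/2-\delta}(\Omega)$.

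The main obstacle is this cross-interface term: it is the only place where the half-derivative threshold $\sigma < \tfrac12$ enters, and hence where the loss of regularity below $H^{5/2}$ originates. Controlling it requires both the smoothness of $\Gamma$ (to set up normal coordinates and to make sense of traces on parallel surfaces) and the fact that an $H^1$ function cannot concentrate too strongly near $\Gamma$. I would note that this gluing fact --- that a globally $L^2$, piecewise $H^1$ function is $H^{1/2-\delta}$ --- is essentially classical and could alternatively be obtained by a partition of unity together with a flattening diffeomorphism reducing to a half-space jump, or by invoking standard transmission-regularity results; I nonetheless prefer the direct Gagliardo estimate, since it avoids having to track how a flattening map acts on fractional Sobolev norms.
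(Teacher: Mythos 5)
Your proposal is correct, and its skeleton coincides with the paper's proof: both reduce the claim to showing that each second-order derivative $D^\alpha v$ has finite Gagliardo $H^\sigma$-seminorm for $\sigma=\tfrac12-\delta$, and both split $\Omega\times\Omega$ into the two diagonal blocks (handled by $H^1\hookrightarrow H^\sigma$ on each subdomain) and the cross block $\Omega_B\times B$. Where you genuinely diverge is in the treatment of the cross term. The paper bounds the numerator $|D^\alpha v(x)-D^\alpha v(y)|^2$ by a uniform constant --- using the Sobolev embedding $H^4(\Omega_i)\hookrightarrow C^2(\overline{\Omega_i})$ in two dimensions, so it really exploits the full piecewise $H^4$ hypothesis --- and then estimates the cross integral by the layer-cake formula together with the geometric observation that the set of pairs $(x,y)\in\Omega_B\times B$ with $\|x-y\|\le r$ has measure $O(|\Gamma|\,r^3)$; the resulting integral $\int_1^\infty t^{-3/(2s+2)}\,dt$ converges precisely for $s<\tfrac12$. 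You instead bound $|g(x)-g(y)|^2\le 2|g(x)|^2+2|g(y)|^2$, integrate out one variable to produce the weight $\operatorname{dist}(x,\Gamma)^{-2\sigma}$, and control the weighted integral by a Hardy-type argument via traces on surfaces parallel to $\Gamma$. Your route needs only piecewise $H^1$ regularity of the second derivatives (i.e.\ piecewise $H^3$ of $v$) and no $L^\infty$ information, so it is slightly more general and dimension-independent; the paper's route is more elementary in that it avoids trace theory and weighted inequalities entirely, at the price of invoking $C^2$-regularity up to the boundary. Both arguments locate the $\sigma<\tfrac12$ threshold in the same place: the integrability of an inverse power of the distance to $\Gamma$ near the interface.
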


\begin{proof}
    See appendix.
\end{proof}

Combining Lemma~\ref{lem:piecewiseRegularity} and Lemma~\ref{lem:gluing} now gives:

\begin{corollary}\label{cor:regularity}
    Let the assumptions from Lemma~\ref{lem:piecewiseRegularity} hold.
    Then $u \in H^{\frac{5}{2}-\delta}(\Omega)$ for every $\delta >0$.
\end{corollary}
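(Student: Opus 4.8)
The plan is to combine the two lemmas directly. From Lemma~\ref{lem:piecewiseRegularity}, the solution $u$ of \pref{prob:optimHardCurve} satisfies the piecewise regularity $u|_{\Omega_B} \in H^4(\Omega_B)$ and $u|_B \in H^4(B)$. Independently, since $u \in V_f \subseteq H_0^2(\Omega)$, we also have the intrinsic global regularity $u \in H^2(\Omega)$. These are exactly the hypotheses of Lemma~\ref{lem:gluing} with $v = u$.

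Thus the argument is essentially a one-line invocation: I would set $v = u$ and verify that $u$ meets the assumptions of Lemma~\ref{lem:gluing}, namely $u \in H^2(\Omega)$ together with the two piecewise-$H^4$ conditions. The first follows from membership in $H_0^2(\Omega)$, and the latter two are precisely the conclusion of Lemma~\ref{lem:piecewiseRegularity}, which applies because the corollary inherits its hypotheses. Lemma~\ref{lem:gluing} then yields $u \in H^{2+\frac{1}{2}-\delta}(\Omega) = H^{\frac{5}{2}-\delta}(\Omega)$ for every $\delta > 0$, which is the assertion.

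Since both ingredients are already available, there is no genuine obstacle in the corollary itself; all the real work has been delegated to the two lemmas. The only point requiring minor care is bookkeeping of the hypotheses: Lemma~\ref{lem:piecewiseRegularity} requires the stronger boundary data $(f_1,f_2) \in H^{\frac{7}{2}}(\Gamma) \times H^{\frac{5}{2}}(\Gamma)$, smooth $\Gamma$, and the corner-angle bound $\omega \leq 126^\circ$, and these are subsumed under ``the assumptions from Lemma~\ref{lem:piecewiseRegularity} hold'' in the corollary's statement. One should note that the intrinsic regularity $u \in H^2(\Omega)$ is automatic from the function space $V_f$ and does not need the geometric restrictions, so the two inputs to the gluing lemma come from genuinely different sources — one structural, one elliptic-regularity-theoretic — but both are guaranteed under the stated hypotheses.

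The heart of the matter, which I would expect to be the substantive step if one were proving everything from scratch, lies entirely inside Lemma~\ref{lem:gluing}: reconciling the piecewise $H^4$ smoothness with the limited global $H^2$ regularity across the interface $\Gamma$ caps the achievable global Sobolev exponent at $\frac{5}{2}$. The loss of half a derivative reflects the jump in higher-order normal derivatives of $u$ across $\Gamma$ that is permitted by the $H^2$ matching condition (only $u$ and $\partial_\nu u$ are continuous across $\Gamma$, while second and third normal derivatives may jump), and the arbitrarily small $\delta$ accounts for the borderline failure of the trace/interpolation estimate at the critical exponent. For the corollary proper, however, the proof is simply the composition of the two preceding results.
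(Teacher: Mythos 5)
Your proposal is correct and matches the paper exactly: the corollary is stated there as an immediate combination of Lemma~\ref{lem:piecewiseRegularity} (piecewise $H^4$ regularity) with Lemma~\ref{lem:gluing} (applied to $v=u$, using $u\in H^2_0(\Omega)$ from the function space). The additional commentary on where the half-derivative loss comes from is accurate but not needed for the corollary itself.
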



\section{Discretization Errors}\label{sec:discreteError}
In this section we apply the results from the previous sections to derive
discretization errors for penalized finite element approximations of the form
\pref{prob:optimSoftCurve} and \pref{prob:optimSoftArea} but with $h$-dependent
penalty parameters.

We suppose from now on that $\Omega$ is a rectangular domain.
In view of Lemma~\ref{lem:piecewiseRegularity} this implies that
$u|_{\Omega_B} \in H^4(\Omega_B)$, $u|_B \in H^4(B)$ and
$u \in H^{\frac{5}{2}-\delta}(\Omega)$ for any $\delta >0$.
On the domain $\Omega$ we establish a quadrilateral grid equipped with
Bogner--Fox--Schmit finite elements (see e.\,g. \cite{Ciarlet78}).
Given the set of grid nodes $N$ and the set of multi-indices
$\Lambda = \{(0,0), (1,0), (0,1), (1,1)\}$,
the resulting finite element space is spanned by a basis
$(\psi_{p,\alpha})_{p\in N, \alpha \in \Lambda}$ of piecewise bi-cubic
polynomials such that each $\psi_{p,\alpha}$ satisfies
\begin{align*}
    \partial^\beta
    \psi_{p,\alpha}(q)
        = \delta_{\alpha,\beta} \delta_{p,q}
    \qquad \forall q \in N, \beta \in \Lambda
\end{align*}
where $\partial^\beta$ is the partial derivative associated with the multi-index $\beta$
and $\delta_{a,b}$ is the Kronecker delta. I.\,e., the degrees of freedom are the values,
first order partial derivatives and mixed second order partial derivatives at the vertices
(see Figure~\ref{fig:bfsdofs}).

\begin{figure}
    \begin{center}
        \begin{tikzpicture}[
            node distance=10mm,
            mixed/.style={
                preaction={decorate,decoration={markings, mark=at position 0 with {\arrowreversed[scale=2]{stealth}}}}
            }]
            \coordinate (p1) at (0,0);
            \coordinate (p2) at (2,0);
            \coordinate (p3) at (2,2);
            \coordinate (p4) at (0,2);
            \draw (p1) node[below left] {}
               -- (p2) node[below right] {}
               -- (p3) node[above right] {}
               -- (p4) node[above left] {}
               --cycle;
            \draw (p1) circle (.2);
            \fill (p1) circle (.1);
            \draw (p1)+(0.5,0.5) edge[mixed] (p1);
            \draw (p2) circle (.2);
            \fill (p2) circle (.1);
            \draw (p2)+(0.5,0.5) edge[mixed] (p2);
            \draw (p3) circle (.2);
            \fill (p3) circle (.1);
            \draw (p3)+(0.5,0.5) edge[mixed] (p3);
            \draw (p4) circle (.2);
            \fill (p4) circle (.1);
            \draw (p4)+(0.5,0.5) edge[mixed] (p4);

            \fill (4,2) circle (.1);
            \draw (4.5,2) node[right] {value};
            \draw (4,1) circle (.2);
            \draw (4.5,1) node[right] {1st order partial derivatives};
            \draw (3.8,-0.2)+(0.5,0.5) edge[mixed] (3.8,-0.2);
            \draw (4.5,0) node[right] {mixed 2nd order partial derivatives};
        \end{tikzpicture}
    \end{center}
    \caption{Degrees of freedom for the Bogner--Fox--Schmit element}
    \label{fig:bfsdofs}
\end{figure}
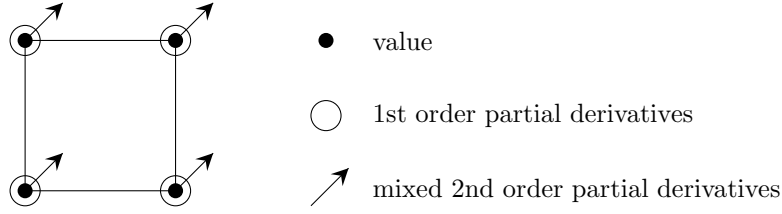

By setting the degrees of freedom on $\partial\Omega$ to zero this yields a
conforming subspace of $C^1(\overline{\Omega}) \cap H_0^2(\Omega)$.
Given a family $(S_h)_{h\in I}$ of such finite element discretizations over
$\Omega$ based on quasi-uniform grids with mesh size $h$
we will use the following well known result:
There exists a constant $c>0$ such that for all $K \in [0,4]$ the approximation estimate
\begin{align}\label{eq:interpolationEstimate}
    \forall{v \in H^K(\Omega) \cap H_0^2(\Omega)}
    \ \exists{\overline{v} \in S_h}
    \ \forall{k\in [0,\min\{2,K\}]}
    \colon \ 
    \Vert v - \overline{v} \Vert_{H^k(\Omega)} \leq c h^{K-k} \Vert v \Vert_{H^K(\Omega)}
\end{align}
holds true for all $h \in I$.
(This is a direct consequence of classical approximation estimates, see e.\,g.
\cite[Theorem 3.1.4]{Ciarlet78}, and interpolation theory of function spaces,
see e.\,g. \cite[Theorem 1.1.6]{Lunardi99}.)

In order to put our minimization problems into the variational framework used
in \cref{sec:abstractError} we introduce the bilinear form
\begin{align*}
    a \colon H^2(\Omega) \times H^2(\Omega) &\longrightarrow \mathbb{R}, &
    a(w,v) &= \kappa ( \Delta w, \Delta v )_{L^2(\Omega)} + \sigma ( \nabla w, \nabla v )_{L^2(\Omega)}.
\end{align*}
together with the curve penalty terms
\begin{align*}
    b_{\Gamma}^1 \colon H^2(\Omega) \times H^2(\Omega) & \longrightarrow \mathbb{R}, &
        b_{\Gamma}^1(w,v) &= ( P_\Gamma^1T_\Gamma^1 w, P_\Gamma^1T_\Gamma^1 v )_{L^2(\Gamma)},\\
    b_{\Gamma}^2 \colon H^2(\Omega) \times H^2(\Omega) & \longrightarrow \mathbb{R}, &
        b_{\Gamma}^2(w,v) &= ( P_\Gamma^2T_\Gamma^2 w, P_\Gamma^2T_\Gamma^2 v )_{L^2(\Gamma)}
\end{align*}
and the bulk penalty term
\begin{align*}
    b_{B}\colon H^2(\Omega) \times H^2(\Omega) & \longrightarrow \mathbb{R}, &
        b_{B}(w,v) &= ( P_BT_B w, P_BT_B v )_{H^s(B)}\text{.}
\end{align*}
Then the
solution $u \in V_f \subset H_0^2(\Omega)$ of \pref{prob:optimHardCurve} satisfies
the variational equation
\begin{align}\label{eq:varBase}
    a(u,v) = 0 \qquad
    \forall{v \in V_0}.
\end{align}

Discretization of the penalized curve constrained \pref{prob:optimSoftCurve}
in $S_h$ leads to the \emph{soft curve problem}:
    Find $u_\eps^h \in S_h$ minimizing
    \begin{align}\label{eq:optimSoftCurveDiscrete}
        \cJ_{\Omega}(u_\eps^h) + \sum_{i=1}^2 \frac{1}{\eps_i} \Vert P_\Gamma^i (T_{\Gamma}^i u_\eps^h - f_i) \Vert_{L^2(\Gamma)}^2\text{.}
    \end{align}
which is equivalently characterized by the variational equation
\begin{align}\label{eq:varDiscreteSoftCurve}
    u_\eps^h \in S_h: \qquad
    a(u^h_\eps,v) + \sum_{i=1}^2 \frac{1}{\eps_i} b_\Gamma^i(u^h_\eps-u,v)  = 0
    \qquad \forall{v \in S_h}.
\end{align}

Analogously, discretization of the penalized bulk constrained \pref{prob:optimSoftArea}
in $S_h$ leads to the \emph{soft bulk problem}:
    Find $u_\eps^h \in S_h$ minimizing
    \begin{align}\label{eq:optimSoftAreaDiscrete}
        \cJ_{\Omega}(u_\eps^h) + \frac{1}{\eps} \Vert P_BT_B(u_\eps^h - u) \Vert_{H^s(B)}^2
    \end{align}
with the equivalent variational equation
\begin{align}\label{eq:varDiscreteSoftArea}
    u_\eps^h \in S_h: \qquad
    a(u^h_\eps,v) + \frac{1}{\eps} b_B(u^h_\eps-u,v)  = 0
    \qquad \forall{v \in S_h}.
\end{align}

At this point we want to emphasize that the solutions of
\eqref{eq:varDiscreteSoftCurve} and \eqref{eq:varDiscreteSoftArea} are in
general different.
We nevertheless refer in a slight abuse of notation to both solutions as $u^h_\eps$.
In the following it will, however, be clear from the context whether $u^h_\eps$
denotes the solution of the soft curve problem or the soft bulk problem.

In the next result we show that the soft curve formulation meets the requirement from Theorem~\ref{thm:AbstractError}, which we will use afterwards to get an asymptotic error estimate.
\begin{lemma}\label{lem:captureConditionSoftCurve}
    For all $v \in H_0^2(\Omega)$ the solution $u$ of \pref{prob:optimHardCurve} satisfies
    \begin{align*}
        a(u,v) \leq \kappa \left(\Vert [\Delta u] \Vert_{L^2(\Gamma)} + \Vert [\partial_\nu \Delta u]\Vert_{L^2(\Gamma)} \right) \left( \Vert v\Vert_{b^1_\Gamma} + \Vert v \Vert_{b^2_\Gamma} \right)\text{.}
    \end{align*}
    Here $[w] = w|_{\Omega_B} - w|_B$ denotes the jump of the function $w$ on $\Gamma$.
\end{lemma}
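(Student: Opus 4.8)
The plan is to derive an explicit representation of $a(u,v)$ as a boundary integral over $\Gamma$ carried by the jumps $[\Delta u]$ and $[\partial_\nu\Delta u]$, and then to estimate the two resulting terms by Cauchy--Schwarz against the seminorms $\norm{v}_{b_\Gamma^2} = \norm{\partial_\nu v}_{L^2(\Gamma)}$ and $\norm{v}_{b_\Gamma^1} = \norm{P_\Gamma^1(v|_\Gamma)}_{L^2(\Gamma)}$. Throughout I use the piecewise regularity $u|_{\Omega_B}\in H^4(\Omega_B)$ and $u|_B\in H^4(B)$ from \cref{lem:piecewiseRegularity} (available here since $\Omega$ is rectangular); this guarantees that $\Delta u$ and $\partial_\nu\Delta u$ possess $L^2(\Gamma)$-traces from either side, so the jumps are well defined as $L^2(\Gamma)$-functions.

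First I would split $a(u,v)$ into the contributions over $\Omega_B$ and $B$ and integrate by parts twice on each subdomain, using Green's formula for the bi-Laplacian together with the standard Green formula for the gradient term. Since by \cref{prop:weakHardCurve} the function $u$ satisfies $Lu = \kappa\Delta^2 u - \sigma\Delta u = 0$ on each subdomain, every volume term cancels and only boundary integrals survive; on $\partial\Omega$ these vanish because $v\in H_0^2(\Omega)$. Adding the two $\Gamma$-contributions while accounting for the opposite orientation of the outward normal on $\partial B$ versus $\partial\Omega_B$, and using that $v$, $\partial_\nu v$ and $\partial_\nu u$ are single-valued across $\Gamma$ (so that $[\partial_\nu u]=0$ and the surface-tension boundary terms drop out), I obtain the identity
\[
    a(u,v) = \kappa\int_\Gamma [\Delta u]\,\partial_\nu v \,\mathrm{d}s - \kappa\int_\Gamma [\partial_\nu \Delta u]\, v\,\mathrm{d}s.
\]

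The key step, and the main obstacle, is to replace the factor $v$ in the second integral by $P_\Gamma^1(v|_\Gamma)$. For this I would prove that $[\partial_\nu\Delta u]$ has vanishing mean on each $\partial B_i$, i.e.\ $\int_{\partial B_i}[\partial_\nu\Delta u]\,\mathrm{d}s = 0$, which is precisely $L^2(\Gamma)$-orthogonality of $[\partial_\nu\Delta u]$ to each indicator $\eta_i$. This follows from the variational equation \eqref{eq:varBase}: testing $a(u,v)=0$ against functions $v\in V_0$ that are constant on a single $\partial B_i$ and satisfy $\partial_\nu v|_\Gamma=0$ (such $v$ exist by surjectivity of $T_\Gamma$ and localization away from $\partial\Omega$) annihilates the first boundary integral in the identity above and isolates $\int_{\partial B_i}[\partial_\nu\Delta u]\,\mathrm{d}s$. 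Since $v|_\Gamma - P_\Gamma^1(v|_\Gamma)$ is a linear combination of the $\eta_i$ and $P_\Gamma^1$ is the $L^2(\Gamma)$-orthogonal projection annihilating them, this orthogonality gives $\int_\Gamma[\partial_\nu\Delta u]\,v\,\mathrm{d}s = \int_\Gamma[\partial_\nu\Delta u]\,P_\Gamma^1(v|_\Gamma)\,\mathrm{d}s$.

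Finally I would apply Cauchy--Schwarz termwise: the first integral is bounded by $\norm{[\Delta u]}_{L^2(\Gamma)}\,\norm{\partial_\nu v}_{L^2(\Gamma)} = \norm{[\Delta u]}_{L^2(\Gamma)}\,\norm{v}_{b_\Gamma^2}$, and the second by $\norm{[\partial_\nu\Delta u]}_{L^2(\Gamma)}\,\norm{P_\Gamma^1(v|_\Gamma)}_{L^2(\Gamma)} = \norm{[\partial_\nu\Delta u]}_{L^2(\Gamma)}\,\norm{v}_{b_\Gamma^1}$. Expanding the product $(\norm{[\Delta u]}_{L^2(\Gamma)} + \norm{[\partial_\nu\Delta u]}_{L^2(\Gamma)})(\norm{v}_{b_\Gamma^1} + \norm{v}_{b_\Gamma^2})$ shows it dominates the sum of these two bounds, since all factors are nonnegative, which yields exactly the claimed estimate.
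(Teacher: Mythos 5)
Your proposal is correct and follows essentially the same route as the paper's proof: integration by parts on $\Omega_B$ and $B$ using $Lu=0$ and the piecewise $H^4$-regularity to obtain $a(u,v)=\kappa([\Delta u],\partial_\nu v)_{L^2(\Gamma)}-\kappa([\partial_\nu\Delta u],v)_{L^2(\Gamma)}$, then the variational equation \eqref{eq:varBase} tested against suitable elements of $V_0$ (built via trace surjectivity) to dispose of the $(\operatorname{id}-P_\Gamma^1)$-part, and finally Cauchy--Schwarz. The only difference is organizational: the paper lifts the single trace datum $(\operatorname{id}-P_\Gamma^i)T_\Gamma^i v$ to one test function $w\in V_0$, whereas you first derive the equivalent natural condition $\int_{\partial B_i}[\partial_\nu\Delta u]\,\mathrm{d}s=0$ for each $i$ and then use that $v|_\Gamma-P_\Gamma^1(v|_\Gamma)$ lies in the span of the $\eta_i$.
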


\begin{proof}
    By integration by parts and as of $u \in H^2(\Omega)$, $u|_{\Omega_B} \in H^4(\Omega_B)$, $u|_B \in H^4(B)$ and $Lu = 0$ we are able to state for all $v \in H_0^2(\Omega)$
    \begin{align*}
        \frac{1}{\kappa} a(u,v)
        & = \int_{\Omega} \Delta u \Delta v + \frac{\sigma}{\kappa} \nabla u \cdot \nabla v
        \\ & = \int_{\Omega_B} \frac{1}{\kappa} Lu|_{\Omega_B} \, v
                + \int_B \frac{1}{\kappa} Lu|_B \, v
                + \int_{\Gamma} \frac{\sigma}{\kappa} \left( \partial_\nu u|_{\Omega_B} - \partial_\nu u|_B \right) v
        \\ & \qquad     + \int_{\Gamma} \left( - \partial_\nu \Delta u|_{\Omega_B} + \partial_\nu \Delta u|_B \right) v
                + \int_{\Gamma} \left( \Delta u|_{\Omega_B} - \Delta u|_B \right ) \partial_\nu v
        \\ & = ( -[\partial_\nu \Delta u], T_\Gamma^1 v)_{L^2(\Gamma)} + ( [\Delta u], T_\Gamma^2 v)_{L^2(\Gamma)}
        \\ & = ( -[\partial_\nu \Delta u], P_\Gamma^1 T_\Gamma^1 v)_{L^2(\Gamma)} + ( [\Delta u], P_\Gamma^2 T_\Gamma^2 v)_{L^2(\Gamma)}
        \\ & \qquad + ( -[\partial_\nu \Delta u], (\operatorname{id}_{L^2(\Gamma)}-P_\Gamma^1)T_\Gamma^1 v)_{L^2(\Gamma)} + ( [\Delta u], (\operatorname{id}_{L^2(\Gamma)}-P_\Gamma^2)T_\Gamma^2 v)_{L^2(\Gamma)}\text{.}
    \end{align*}
    Since $\widetilde{v}_i := (\operatorname{id}_{L^2(\Gamma)} - P_{\Gamma}^i) T_\Gamma^i v \in H^{\frac{5}{2}-i}(\Gamma)$ and because $\Gamma$ is sufficiently smooth there exists a function $w \in H_0^2(\Omega)$ such that $T_\Gamma^i w = \widetilde{v}_i$, \cite[Theorem 9.4]{LioMag72}.
    In particular, we have $P_\Gamma^i T_\Gamma^i w = 0$ which yields $w \in V_0$ and
    \begin{align*}
        0 & = \frac{1}{\kappa} a(u,w) = ( -[\partial_\nu \Delta u], T_\Gamma^1 w)_{L^2(\Gamma)} + ( [\Delta u], T_\Gamma^2 w)_{L^2(\Gamma)}
        \\ & = ( -[\partial_\nu \Delta u], (\operatorname{id}_{L^2(\Gamma)}-P_\Gamma^1)T_\Gamma^1 v)_{L^2(\Gamma)} + ( [\Delta u], (\operatorname{id}_{L^2(\Gamma)}-P_\Gamma^2)T_\Gamma^2 v)_{L^2(\Gamma)}
    \end{align*}
    by applying the variational equation \eqref{eq:varBase} for $u$.
    Combined with the Cauchy--Schwarz inequality this implies
    \begin{align*}
        a(u,v) \leq \kappa \left(\Vert [\Delta u] \Vert_{L^2(\Gamma)} + \Vert [\partial_\nu \Delta u]\Vert_{L^2(\Gamma)} \right) \left( \Vert P_\Gamma^1 T_\Gamma^1 v\Vert_{L^2(\Gamma)} + \Vert P_\Gamma^2 T_\Gamma^2 v\Vert_{L^2(\Gamma)} \right)\text{,}
    \end{align*}
    which was to be shown.
\end{proof}

Now we are in the situation to show the main results,
namely the discretization error estimate for the discretizations
\eqref{eq:optimSoftCurveDiscrete} and \eqref{eq:optimSoftAreaDiscrete}.

\begin{theorem}\label{thm:discretizationErrorSoftCurve}
    Let $u^h_\eps \in S_h$ be the solution of the discrete soft curve problem \eqref{eq:varDiscreteSoftCurve}
    and assume that $\eps_1 = c_1 h^{\lambda_1}$ and $\eps_2 = c_2 h^{\lambda_2}$.
    For any $\delta >0$ define
    \begin{align*}
        \gamma = \min_{i\in\{1,2\}}\left( \frac{1}{2}-\delta, 3-i-2\delta-\frac{\lambda_i}{2}, \frac{\lambda_i}{2} \right)\text{.}
    \end{align*}
    Then there exists a constant $c>0$ independent of $h$ such that
    \begin{align*}
        \Vert u - u^h_\eps \Vert_{H^2(\Omega)} \leq c h^{\gamma}\text{.}
    \end{align*}
    In particular, $\Vert u - u^h_\eps \Vert_{H^2(\Omega)} \in O(h^{\frac{1}{2}-\delta})$
    for $\lambda_1 \in [1-2\delta,3-2\delta]$ and $\lambda_2 = 1-2\delta$.
\end{theorem}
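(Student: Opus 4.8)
The plan is to recognize the discrete soft curve problem \eqref{eq:varDiscreteSoftCurve} as an instance of the abstract penalty framework of \cref{sec:abstractError} and then to feed in the regularity of \cref{sec:regularity} through the interpolation estimate \eqref{eq:interpolationEstimate}. Concretely, I would set $X = S_h$, take the two penalty forms $b_1 = b_\Gamma^1$ and $b_2 = b_\Gamma^2$, and identify the right-hand side as $\ell = 0$, since by \eqref{eq:varBase} the hard curve solution $u$ satisfies $a(u,v)=0$ on $V_0$ and $\cJ_{\Omega}$ carries no linear term. The capture condition \eqref{eq:captureCondition} is then \emph{exactly} \cref{lem:captureConditionSoftCurve}: applying it to $\pm v$ gives $\abs{a(u,v)} \le C_0\left(\Vert v\Vert_{b_\Gamma^1} + \Vert v\Vert_{b_\Gamma^2}\right)$ for all $v \in S_h \subset H_0^2(\Omega)$, with the single constant $C_0 = \kappa\left(\Vert[\Delta u]\Vert_{L^2(\Gamma)} + \Vert[\partial_\nu\Delta u]\Vert_{L^2(\Gamma)}\right)$, which is finite by the piecewise $H^4$-regularity and independent of $h$. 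Thus $c_1=c_2=C_0$ in \eqref{eq:captureCondition}.

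With these identifications \cref{thm:AbstractError} yields, for $h$ small enough that $\eps\in(0,1]^2$,
\begin{align*}
    \Vert u - u^h_\eps\Vert_{a_\eps}^2 \le 3\inf_{v\in S_h}\Bigl(\Vert u-v\Vert_{a_\eps}^2 + C_0^2(\eps_1 + \eps_2)\Bigr),
\end{align*}
so it remains to exhibit one good competitor $v\in S_h$ and to unwind $\Vert\cdot\Vert_{a_\eps}$. I would take $v = \bar u$ to be the single interpolant from \eqref{eq:interpolationEstimate} for the choice $K = \tfrac52 - \delta_0$, which is admissible since $u \in H^{5/2-\delta_0}(\Omega)\cap H_0^2(\Omega)$ by \cref{cor:regularity} and $K\in[0,4]$, and which is simultaneously $H^k$-accurate for every $k\in[0,2]$. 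Splitting $\Vert u-\bar u\Vert_{a_\eps}^2 = a(u-\bar u,u-\bar u) + \sum_{i} \eps_i^{-1}\Vert u-\bar u\Vert_{b_\Gamma^i}^2$, I would bound the elastic term via boundedness of $a$ and \eqref{eq:interpolationEstimate} at $k=2$, giving $a(u-\bar u,u-\bar u) \lesssim \Vert u-\bar u\Vert_{H^2(\Omega)}^2 \lesssim h^{1-2\delta_0}$. For each penalty term I would use that $P_\Gamma^i$ is an $L^2(\Gamma)$-orthogonal, hence non-expansive, projection, so $\Vert u-\bar u\Vert_{b_\Gamma^i} \le \Vert T_\Gamma^i(u-\bar u)\Vert_{L^2(\Gamma)}$, and then invoke the trace theorem for the smooth interior curve $\Gamma$, namely $\Vert T_\Gamma^i w\Vert_{L^2(\Gamma)} \lesssim \Vert w\Vert_{H^{i-1/2+\delta_1}(\Omega)}$, combined with \eqref{eq:interpolationEstimate} at the \emph{minimal} admissible order $k = i-\tfrac12+\delta_1$. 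This produces $\Vert u-\bar u\Vert_{b_\Gamma^i}^2 \lesssim h^{6-2i-2\delta_0-2\delta_1}$ and hence $\eps_i^{-1}\Vert u-\bar u\Vert_{b_\Gamma^i}^2 \lesssim h^{6-2i-2\delta_0-2\delta_1-\lambda_i}$.

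Collecting the five contributions $h^{1-2\delta_0}$, $h^{6-2i-2\delta_0-2\delta_1-\lambda_i}$ and $h^{\lambda_i}$ for $i=1,2$, and setting $\delta_0=\delta_1=\delta$, the squared energy error is $\lesssim h^{2\gamma}$ with $2\gamma$ equal to the minimum of these exponents, which is precisely $2\gamma$ for the $\gamma$ of the statement. Since $a_\eps(w,w) \ge a(w,w) \ge \kappa c\,\Vert w\Vert_{H^2(\Omega)}^2$ for $w = u-u^h_\eps\in H_0^2(\Omega)$ (using $\kappa>0$, $\sigma\ge0$ and the equivalence of $\Vert\Delta w\Vert_{L^2(\Omega)}$ with $\Vert w\Vert_{H^2(\Omega)}$ on $H_0^2(\Omega)$), taking square roots turns this into $\Vert u-u^h_\eps\Vert_{H^2(\Omega)} \lesssim h^\gamma$. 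For the ``in particular'' claim I would substitute $\lambda_2=1-2\delta$ and $\lambda_1\in[1-2\delta,3-2\delta]$ and check elementarily that each of the three candidate exponents in $\gamma$ is $\ge \tfrac12-\delta$, with equality attained by the $i=2$ terms, so that $\gamma = \tfrac12-\delta$.

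I expect the main obstacle to be the trace step. Because the interface $\Gamma$ deliberately does \emph{not} align with the mesh, the penalty contributions cannot be controlled by element-wise discrete trace inequalities and must instead be estimated by the ambient Sobolev trace theorem; this forces the choice of the minimal fractional order $k=i-\tfrac12+\delta_1$ just above the continuity threshold and is exactly what generates the loss of $\delta$ in the rate. Keeping the two auxiliary parameters $\delta_0$ (regularity loss) and $\delta_1$ (trace-threshold loss) bookkept so that they combine into the single $\delta$ of the statement, and ensuring that one interpolant serves all three norms simultaneously, are the points requiring care; everything else reduces to continuity of $a$, non-expansiveness of $P_\Gamma^i$, and the direct application of \cref{thm:AbstractError}.
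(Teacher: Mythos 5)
Your proposal is correct and follows essentially the same route as the paper: invoke \cref{lem:captureConditionSoftCurve} to verify the capture condition, apply \cref{thm:AbstractError} with $X=S_h$, bound the best-approximation terms by interpolating $u\in H^{5/2-\delta}(\Omega)$ via \eqref{eq:interpolationEstimate} together with continuity of $a$ on $H^2(\Omega)$ and of $b_\Gamma^i$ on $H^{i-1/2+\delta}(\Omega)$, and conclude by coercivity of $a$ on $H_0^2(\Omega)$. Your exponent bookkeeping (with $\delta_0=\delta_1=\delta$) reproduces the paper's $\gamma$ exactly, and the explicit non-expansiveness/trace argument is just a spelled-out version of the continuity of $b_\Gamma^i$ that the paper asserts.
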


\begin{proof}
    In this proof we use the notation $A \lesssim B$ whenever $A \leq c B$ holds with a constant
    $c$ that is independent of $\varepsilon_1$, $\varepsilon_2$ and $h$.
    
    As of Lemma~\ref{lem:captureConditionSoftCurve} we can apply Theorem~\ref{thm:AbstractError}.
    Using this together with the coercivity of $a(\cdot,\cdot)$ on $H_0^2(\Omega)$ we conclude for all $v\in S_h$ 
    \begin{align*}
        \Vert u - u^h_\eps \Vert_{H^2(\Omega)}
        & \lesssim \Vert u - u^h_\eps \Vert_{a}
        \\ & \lesssim \Vert u - v \Vert_a + \sum_{i=1}^2 \frac{1}{\sqrt{\eps_i}} \Vert u - v \Vert_{b_i} + \sum_{i=1}^2 \sqrt{\eps_i}
    \end{align*}
    Note that $a(\cdot,\cdot)$ is continuous on $H^2(\Omega)$ and $b_i(\cdot,\cdot)$
    is continuous on $H^{i-\frac{1}{2}+\delta}(\Omega)$.
    Furthermore, $u \in H^{\frac{5}{2}-\delta}(\Omega)$ according to Corollary~\ref{cor:regularity}.
    Choosing the interpolant $v = \overline{u}$ and applying the
    interpolation estimate \eqref{eq:interpolationEstimate} yields
    \begin{align*}
        \Vert u - u^h_\eps \Vert_{H^2(\Omega)}
        & \lesssim  \Vert u - \overline{u} \Vert_{H^2(\Omega)} + \sum_{i=1}^2 \frac{1}{\sqrt{\eps_i}} \Vert u - \overline{u} \Vert_{H^{i-\frac{1}{2}+\delta}(\Omega)} + \sum_{i=1}^2 \sqrt{\eps_i}
        \\ & \lesssim h^{\frac{1}{2}-\delta} \Vert u \Vert_{H^{\frac{5}{2}-\delta}(\Omega)} + \sum_{i=1}^2 \frac{1}{\sqrt{\eps_i}} h^{3-i-2\delta} \Vert u \Vert_{H^{\frac{5}{2}-\delta}(\Omega)} + \sum_{i=1}^2 \sqrt{\eps_i}
        \\ & \lesssim h^{\frac{1}{2}-\delta} + \sum_{i=1}^2 h^{3-i-2\delta-\frac{\lambda_i}{2}} + \sum_{i=1}^2 h^{\frac{\lambda_i}{2}}
        \\ & \lesssim h^\gamma\text{.}
    \end{align*}
    This proves the assertion.
\end{proof}

We can proceed similarly to prove convergence rates for the soft bulk formulation.
\begin{lemma}\label{lem:captureConditionSoftArea}
    Suppose that there is a constant $c_0 >0$ such that for every $h \in I$, $K \in [0,2]$ and $k\in [0,K]$ the inverse estimate
    \begin{align}\label{eq:inverseEstimate}
        \forall{v \in S_h} \colon \ 
        \vert v \vert_{H^K(B)} \leq c_0 h^{k-K} \vert v \vert_{H^k(B)}
    \end{align}
    is fulfilled.
    Let $u$ be the solution of \pref{prob:optimHardCurve}
    and $\delta>0$. Then there exists a constant $c>0$ such that for all $h \in I$ and $v \in S_h$
    \begin{align*}
        a(u,v) \leq c h^{\min\left(0,s-\frac{3}{2}-\delta\right)} \Vert v \Vert_{b_B}\text{.}
    \end{align*}
\end{lemma}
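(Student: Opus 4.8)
The plan is to reduce the bulk penalty estimate to the already-established curve penalty estimate of Lemma~\ref{lem:captureConditionSoftCurve} and then to convert the $L^2(\Gamma)$ trace terms appearing there into the bulk quantity $\Vert v\Vert_{b_B}=\Vert P_B T_B v\Vert_{H^s(B)}$ by combining the trace theorem with the inverse estimate \eqref{eq:inverseEstimate}. Concretely, Lemma~\ref{lem:captureConditionSoftCurve} already yields
\begin{align*}
    a(u,v) \lesssim \Vert P_\Gamma^1 T_\Gamma^1 v\Vert_{L^2(\Gamma)} + \Vert P_\Gamma^2 T_\Gamma^2 v\Vert_{L^2(\Gamma)},
\end{align*}
with a constant depending only on the jump terms $\Vert[\Delta u]\Vert_{L^2(\Gamma)}$ and $\Vert[\partial_\nu\Delta u]\Vert_{L^2(\Gamma)}$, which are finite by Lemma~\ref{lem:piecewiseRegularity}. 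It therefore remains to bound the right-hand side by $h^{\min(0,s-3/2-\delta)}\Vert v\Vert_{b_B}$.

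The first step is to rewrite the curve traces in terms of $w:=P_B T_B v\in H^2(B)$. Since $v\in S_h\subset C^1(\overline{\Omega})$, its trace from inside $B$ coincides with $T_\Gamma v$, so $T_\Gamma^i v = T_\Gamma^i(T_B v)$. Writing $T_B v = w + \sum_{i=1}^k \lambda_i\chi_{B_i}$ with the coefficients $\lambda_i$ from the definition of $P_B$, I would use that each $\chi_{B_i}$ is locally constant on $B$, whence $T_\Gamma^1\chi_{B_i}=\eta_i$ and $T_\Gamma^2\chi_{B_i}=0$. Because $P_\Gamma^1\eta_i=0$ (orthogonality of the $\eta_i$) and $P_\Gamma^2=\operatorname{id}$, this gives the identities
\begin{align*}
    P_\Gamma^1 T_\Gamma^1 v = P_\Gamma^1 T_\Gamma^1 w, \qquad P_\Gamma^2 T_\Gamma^2 v = T_\Gamma^2 w.
\end{align*}
As $P_\Gamma^1$ is the $L^2(\Gamma)$-orthogonal projection onto $\{\eta_i\}^\perp$ and hence a contraction, both curve terms are dominated by the plain traces of $w$, i.e. $\Vert P_\Gamma^1 T_\Gamma^1 v\Vert_{L^2(\Gamma)} + \Vert P_\Gamma^2 T_\Gamma^2 v\Vert_{L^2(\Gamma)} \le \Vert T_\Gamma^1 w\Vert_{L^2(\Gamma)} + \Vert T_\Gamma^2 w\Vert_{L^2(\Gamma)}$.

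The second step is a trace-plus-inverse estimate for $w$. The normal-derivative trace is the binding one: $T_\Gamma^2\colon H^{3/2+\delta}(B)\to L^2(\Gamma)$ is bounded (while $T_\Gamma^1\colon H^{1/2+\delta}(B)\to L^2(\Gamma)$ is less restrictive), so both traces are controlled by $\Vert w\Vert_{H^{3/2+\delta}(B)}$. I would then invoke \eqref{eq:inverseEstimate}: if $s\ge \tfrac32+\delta$ the embedding $H^s(B)\hookrightarrow H^{3/2+\delta}(B)$ gives $\Vert w\Vert_{H^{3/2+\delta}(B)}\lesssim\Vert w\Vert_{H^s(B)}$ (exponent $0$), whereas for $s<\tfrac32+\delta$ the inverse estimate supplies the factor $h^{s-3/2-\delta}$. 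A small but essential point is that \eqref{eq:inverseEstimate} is stated for $S_h$ while $w=P_B T_B v$ is a projected restriction; this is harmless, because subtracting the locally constant functions $\lambda_i\chi_{B_i}$ leaves every seminorm $\vert\cdot\vert_{H^K(B)}$ with $K\ge 1$ unchanged, so the hypothesis applies with $v$ in place of $w$ for the top-order parts, and the full-norm version follows by summing the seminorm estimates over $0\le K\le \tfrac32+\delta$ (using $h\le 1$). Collecting the two cases yields $\Vert w\Vert_{H^{3/2+\delta}(B)}\lesssim h^{\min(0,s-3/2-\delta)}\Vert w\Vert_{H^s(B)}=h^{\min(0,s-3/2-\delta)}\Vert v\Vert_{b_B}$, which together with the previous displays proves the claim.

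The main obstacle is exactly this interaction of the trace theorem with the inverse estimate: one must check that the hypothesis \eqref{eq:inverseEstimate}, formulated for $S_h$ and for seminorms, may legitimately be transferred to the projected restriction $w$ and then assembled into a full-norm bound without degrading the exponent $\min(0,s-3/2-\delta)$. The bookkeeping of the low-order seminorms (orders below $1$, and below $s$ when $s$ is small) is the one genuinely delicate part; everything else is the algebra of the projection identities, which is routine once one observes $T_\Gamma^1\chi_{B_i}=\eta_i\in\ker P_\Gamma^1$ and $T_\Gamma^2\chi_{B_i}=0$.
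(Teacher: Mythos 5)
Your proof is correct and reaches the paper's bound by the same overall chain: eliminate the locally constant part of $T_B v$, pass from curve traces to $\Vert P_B T_B v\Vert_{H^{3/2+\delta}(B)}$ via the trace theorem, and convert to $\Vert v\Vert_{b_B}$ with the inverse estimate \eqref{eq:inverseEstimate}. The one place where you genuinely diverge is the elimination step: the paper re-derives the integration-by-parts identity $\tfrac1\kappa a(u,v)=(-[\partial_\nu\Delta u],v)_{L^2(\Gamma)}+([\Delta u],\partial_\nu v)_{L^2(\Gamma)}$, splits $T_B v$ into $P_B T_B v$ and $(\operatorname{id}_B-P_B)T_Bv$, and kills the second contribution by lifting its traces to a function $w\in V_0$ and invoking $a(u,w)=0$; you instead quote Lemma~\ref{lem:captureConditionSoftCurve} as a black box and observe that $T_\Gamma^1\chi_{B_i}=\eta_i\in\ker P_\Gamma^1$ and $T_\Gamma^2\chi_{B_i}=0$, which is shorter and avoids the lifting argument entirely (at the mild cost of an extra contraction step for $P_\Gamma^1$). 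Both routes then face the same delicate point you flag: \eqref{eq:inverseEstimate} is hypothesized for $v\in S_h$ but must be applied to $w=P_BT_Bv\notin S_h|_B$; the paper applies it without comment, whereas you at least reduce it to the order-$\geq 1$ seminorms (where subtracting locally constant functions is harmless). Your remaining bookkeeping for the low-order seminorms when $s<1$ --- bounding $\vert w\vert_{H^1(B)}$ by $h^{s-3/2-\delta}\Vert w\Vert_{H^s(B)}$ requires an inverse estimate for the \emph{projected} function, not just for $v$ --- is still only sketched, but this is no worse than the paper's own treatment, and since \eqref{eq:inverseEstimate} is an explicit assumption anyway (as the paper's remark after the lemma concedes), the gap is one of formulation rather than substance.
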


\begin{proof}
    From the proof of Lemma~\ref{lem:captureConditionSoftCurve} we know for all $v \in S_h$
    \begin{align*}
        \frac{1}{\kappa} a(u,v)
        & = ( -[\partial_\nu \Delta u], v )_{L^2(\Gamma)}
            + ( [\Delta u],\partial_\nu v)_{L^2(\Gamma)}
        \\ & = ( -[\partial_\nu \Delta u], T_\Gamma^1 P_B T_Bv )_{L^2(\Gamma)}
            + ( [\Delta u], T_\Gamma^2 P_BT_Bv)_{L^2(\Gamma)}
        \\ & \quad + ( -[\partial_\nu \Delta u], T_\Gamma^1 (\operatorname{id}_B - P_B) T_B v )_{L^2(\Gamma)}
            + ( [\Delta u], T_\Gamma^2 (\operatorname{id}_B - P_B) T_Bv)_{L^2(\Gamma)}\text{.}
    \end{align*}
    And as in the proof of Lemma~\ref{lem:captureConditionSoftCurve} there exists a function $w \in V_0 \subseteq H_0^2(\Omega)$ such that $T_\Gamma^i w = T_\Gamma^i(\operatorname{id}_B - P_B) T_B v$ for $i\in\{1,2\}$.
    From this we conclude
    \begin{align*}
        0 = \frac{1}{\kappa} a(u,w) = ( -[\partial_\nu \Delta u], T_\Gamma^1 (\operatorname{id}_B - P_B) T_B v )_{L^2(\Gamma)}
            + ( [\Delta u], T_\Gamma^2 (\operatorname{id}_B - P_B) T_Bv)_{L^2(\Gamma)}
    \end{align*}
    as in the proof of Lemma~\ref{lem:captureConditionSoftCurve}.
    Furthermore note that the operators $T_\Gamma^i$ are continuous and linear over $H^{\frac{3}{2}+\delta}(B)$ and thus the corresponding operator norms $\vertiii{T_{\Gamma}^i}_\delta$ are bounded.
    It follows that
    \begin{align*}
        \frac{1}{\kappa} a(u,v)
        & = ( -[\partial_\nu \Delta u], T_\Gamma^1 P_B T_Bv )_{L^2(\Gamma)}
            + ( [\Delta u], T_\Gamma^2 P_BT_Bv)_{L^2(\Gamma)}
        \\ & \leq \Vert [\partial_\nu \Delta u] \Vert_{L^2(\Gamma)} \vertiii{T_\Gamma^1}_{\delta} \Vert P_B T_B v \Vert_{H^{\frac{3}{2}+\delta}(B)}
        \\ & \quad + \Vert [\Delta u] \Vert_{L^2(\Gamma)} \vertiii{T_\Gamma^2}_{\delta} \Vert P_B T_B v \Vert_{H^{\frac{3}{2}+\delta}(B)}\text{.}
    \end{align*}
    Together with \eqref{eq:inverseEstimate} this implies
    \begin{align*}
        a(u,v)
        & \leq c h^{\min(0,s-\frac{3}{2}-\delta)}\Vert P_BT_Bv \Vert_{H^s(B)}
    \end{align*}
    which proves the assertion.
\end{proof}

Note that \eqref{eq:inverseEstimate} is indeed an \emph{assumption}.
We can not apply the standard finite element inverse estimates here as the grid is in general not matched to the domain $B$.
A closer look to the proof of the standard inverse estimate reveals that the constant in \eqref{eq:inverseEstimate} would go to infinity as $\abs{E_h\cap B} / \abs{E_h} \to 0$ for $I \ni h \to 0$ and some element $E_h$ in the grid associated to $S_h$.
This means that the above inverse estimate over $B$ is only valid for grids that resolve $B$ sufficiently well.

\begin{theorem}\label{thm:discretizationErrorSoftArea}
    Let the assumptions from Lemma~\ref{lem:captureConditionSoftArea} hold,
    $u^h_\eps \in S_h$ the solution of the discrete soft bulk problem \eqref{eq:varDiscreteSoftArea},
    and $\eps = c_0 h^\lambda$ for some $\delta > 0$.
    Define
    \begin{align*}
        \gamma = \min\left( \frac{1}{2}-\delta, \frac{5}{2}-\delta-s-\frac{\lambda}{2}, \min(0,s-\frac{3}{2}-\delta)+\frac{\lambda}{2}\right)\text{.}
    \end{align*}
    Then there exists a constant $c>0$ independent of $h$ such that
    \begin{align*}
        \Vert u - u^h_\eps \Vert_{H^2(\Omega)}
        & \leq c h^\gamma\text{.}
    \end{align*}
    In particular, $\Vert u-u^h_\eps\Vert_{H^2(\Omega)} \in \mathcal{O}(h^{\frac{1}{2}-\delta})$ for $s \leq \frac{3}{2}+\delta$ and $\lambda = 4-2s$.
\end{theorem}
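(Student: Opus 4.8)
The plan is to transcribe the proof of Theorem~\ref{thm:discretizationErrorSoftCurve}, now using the single bulk penalty $b_B$ in place of the two curve penalties, while carefully tracking the fact that the capture constant is itself $h$-dependent. Throughout I would again write $A \lesssim B$ to mean $A \le cB$ with a constant $c$ independent of $h$ and $\eps$. First, Lemma~\ref{lem:captureConditionSoftArea} supplies exactly the capture condition \eqref{eq:captureCondition} required by Theorem~\ref{thm:AbstractError}, with $m = 1$, bilinear form $b_B$, and constant $c_1 = c\,h^{\min(0,s-\frac{3}{2}-\delta)}$; note that the linear form $\ell$ vanishes here, since $u$ solves the homogeneous variational equation \eqref{eq:varBase}. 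Applying Theorem~\ref{thm:AbstractError} and then using coercivity of $a(\cdot,\cdot)$ on $H_0^2(\Omega)$ to pass from the energy norm to the Sobolev norm via $\Vert u - u^h_\eps\Vert_{H^2(\Omega)} \lesssim \Vert u - u^h_\eps\Vert_a \le \Vert u - u^h_\eps\Vert_{a_\eps}$, I would obtain, for every $v \in S_h$,
\[
    \Vert u - u^h_\eps \Vert_{H^2(\Omega)}
    \lesssim \Vert u - v\Vert_a + \frac{1}{\sqrt{\eps}}\Vert u - v\Vert_{b_B} + \sqrt{\eps}\,h^{\min(0,s-\frac{3}{2}-\delta)}.
\]

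Next I would bound the two best-approximation terms by Sobolev norms. Continuity of $a(\cdot,\cdot)$ on $H^2(\Omega)$ gives $\Vert u - v\Vert_a \lesssim \Vert u - v\Vert_{H^2(\Omega)}$, while $b_B$ is continuous on $H^s(\Omega)$: since $P_B$ is an orthogonal projection with respect to the $H^s(B)$ inner product and $T_B$ is the restriction to $B$, one has $\Vert u - v\Vert_{b_B} = \Vert P_B T_B(u-v)\Vert_{H^s(B)} \le \Vert u - v\Vert_{H^s(\Omega)}$. By Corollary~\ref{cor:regularity} the solution satisfies $u \in H^{\frac{5}{2}-\delta}(\Omega)$, so choosing the interpolant $v = \overline{u}$ and applying the interpolation estimate \eqref{eq:interpolationEstimate} with $K = \frac{5}{2}-\delta$ (using $k = 2$ and $k = s \le 2 = \min\{2,K\}$, respectively) yields $\Vert u - \overline{u}\Vert_{H^2(\Omega)} \lesssim h^{\frac{1}{2}-\delta}$ and $\Vert u - \overline{u}\Vert_{H^s(\Omega)} \lesssim h^{\frac{5}{2}-\delta-s}$.

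Substituting these bounds and inserting $\eps = c_0 h^\lambda$ then gives
\[
    \Vert u - u^h_\eps\Vert_{H^2(\Omega)}
    \lesssim h^{\frac{1}{2}-\delta} + h^{\frac{5}{2}-\delta-s-\frac{\lambda}{2}} + h^{\min(0,s-\frac{3}{2}-\delta)+\frac{\lambda}{2}},
\]
and taking the minimum of the three exponents reproduces the claimed $\gamma$. For the concluding assertion, the condition $s \le \frac{3}{2}+\delta$ forces $\min(0,s-\frac{3}{2}-\delta)=0$; then with $\lambda = 4-2s$ the second exponent becomes $\frac{5}{2}-\delta-s-(2-s) = \frac{1}{2}-\delta$ and the third becomes $\frac{\lambda}{2} = 2-s \ge \frac{1}{2}-\delta$, so that $\gamma = \frac{1}{2}-\delta$.

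I would expect the only delicate point to be the handling of the $h$-dependent capture constant. Unlike in the soft curve case, where the constant in Lemma~\ref{lem:captureConditionSoftCurve} is fixed, here it grows like $h^{s-\frac{3}{2}-\delta}$ when $s < \frac{3}{2}+\delta$ and must be paired with the factor $\sqrt{\eps}$ in the penalty contribution. Getting this exponent bookkeeping right — and, upstream of it, recognizing that the inverse-estimate assumption \eqref{eq:inverseEstimate} is precisely what renders $b_B$ controllable in the $H^{\frac{3}{2}+\delta}(B)$ norm entering that constant — is the main obstacle; the remainder is a direct transcription of the soft curve argument.
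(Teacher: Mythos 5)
Your proposal is correct and follows essentially the same route as the paper: invoke Lemma~\ref{lem:captureConditionSoftArea} to satisfy the capture condition of Theorem~\ref{thm:AbstractError} with the $h$-dependent constant $c\,h^{\min(0,s-\frac{3}{2}-\delta)}$, use coercivity and continuity of $a$ and $b_B$, insert the interpolant via \eqref{eq:interpolationEstimate}, and substitute $\eps = c_0 h^\lambda$ to read off the three exponents. The exponent bookkeeping, including the verification of the concluding special case, matches the paper's argument.
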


\begin{proof}
    As of Lemma~\ref{lem:captureConditionSoftArea} we can apply Theorem~\ref{thm:AbstractError}.
    Noting continuity of $b_B(\cdot,\cdot)$ on $H^s(\Omega)$ we obtain as in the proof
    of Theorem~\ref{thm:discretizationErrorSoftCurve} that
    \begin{align*}
        \Vert u - u^h_\eps \Vert_{H^2(\Omega)}
        & \lesssim \Vert u - v \Vert_{H^2(\Omega)}(\Omega) + \frac{1}{\sqrt{\eps}} \Vert u - v\Vert_{H^s(\Omega)} + h^{\min(0,s-\frac{3}{2}-\delta)}\sqrt{\eps} \text{.}
    \end{align*}
    Again choosing the interpolant $v = \overline{u}$ and applying the interpolation estimate \eqref{eq:interpolationEstimate} finally gives
    \begin{align*}
        \Vert u - u^h_\eps \Vert_{H^2(\Omega)}
        & \lesssim h^{\frac{1}{2}-\delta} + h^{\frac{5}{2}-\delta-s-\frac{\lambda}{2}} + h^{\min(0,s-\frac{3}{2}-\delta)+\frac{\lambda}{2}}
        \lesssim h^\gamma
    \end{align*}
    and thus proves the assertion.
\end{proof}


\section{Numerical Example}
\label{sec:computations}


\subsection{A symmetric example problem with known exact solution}
We
select a problem with a known,  analytically
computable solution.
We consider the circular domain $\hat{\Omega} = \{x \in \R^2 \st |x| < r_2\}$
with one embedded particle $\hat{B} = \{x \in \R^2 \st |x| < r_1\}$ for
$0 < r_1 < r_2 < 1$.
On the boundary of $\hat{\Omega}$ we consider homogeneous Dirichlet
boundary conditions whereas on the particle boundary
$\Gamma_1 = \partial B_1$ we impose variable-height
coupling conditions of the form \eqref{eq:var_height_bc}, i.\,e.,
\begin{align}
    u|_{\partial B_1} &= f^1_1 + \gamma^1, &
    \partial_\nu u|_{\partial B_1} &= f^1_2
\end{align}
for variable $\gamma^1 \in \R$ with the given functions
\begin{align*}
    f_1^1(re^{i\theta}) &= \cos(n \theta), &
    f_2^1(x) &= 0.
\end{align*}


We selected the parameters
\begin{align*}
    r_2 &= \tfrac23, &
    r_1 &= \tfrac13, &
    \kappa &= 1, &
    \sigma &= 0, &
    n &=4.
\end{align*}
Then the exact solution of Problem~\ref{prob:optimBase} on $\hat{\Omega}$
is the fourfold symmetric function $u \in H^2(\hat{\Omega})$ given by
\begin{align*}
    u(re^{i\theta}) =
        \begin{cases}
            \cos(4\theta) \, (c_1 r^4 + c_2 r^6) & \text{ for $r \in (0,r_1]$,}
            \\ \cos(4\theta) \, ( c_3 r^{-2} + c_4 r^{-4} + c_5 r^4 + c_6 r^6 ) & \text{ for $r \in [r_1,r_2]$,}
            \\ 0 & \text{ else}
        \end{cases}
\end{align*}
with the constants
\begin{alignat*}{3}
    c_1 & = 243,
    \qquad & c_2 & = -1458,
    \qquad & c_3 & =-6909/689,
    \\ c_4 & = -7936/502281,
    \qquad & c_5 & = 11502/689,
    \qquad & c_6 & = 44288/167427
    \text{.}
\end{alignat*}

In order to discretize the circular domain $\hat{\Omega}$ with the presented
Bogner--Fox--Schmit discretization, we embed $\hat{\Omega}$ into the larger
rectangular domain $\Omega = [-1,1]^2$ and treat the condition on
$\Gamma_2 = \partial \hat{\Omega}$ like a second particle boundary condition.
It is easily seen that the solution of Problem~\ref{prob:optimBase} on $\Omega$
is obtained by extending the solution given on $\hat{\Omega}$ by zero.
The solution of this problem is depicted in  \cref{fig:michellSolution}.

\begin{figure}
    \begin{center}
        \includegraphics[width=0.45\textwidth]{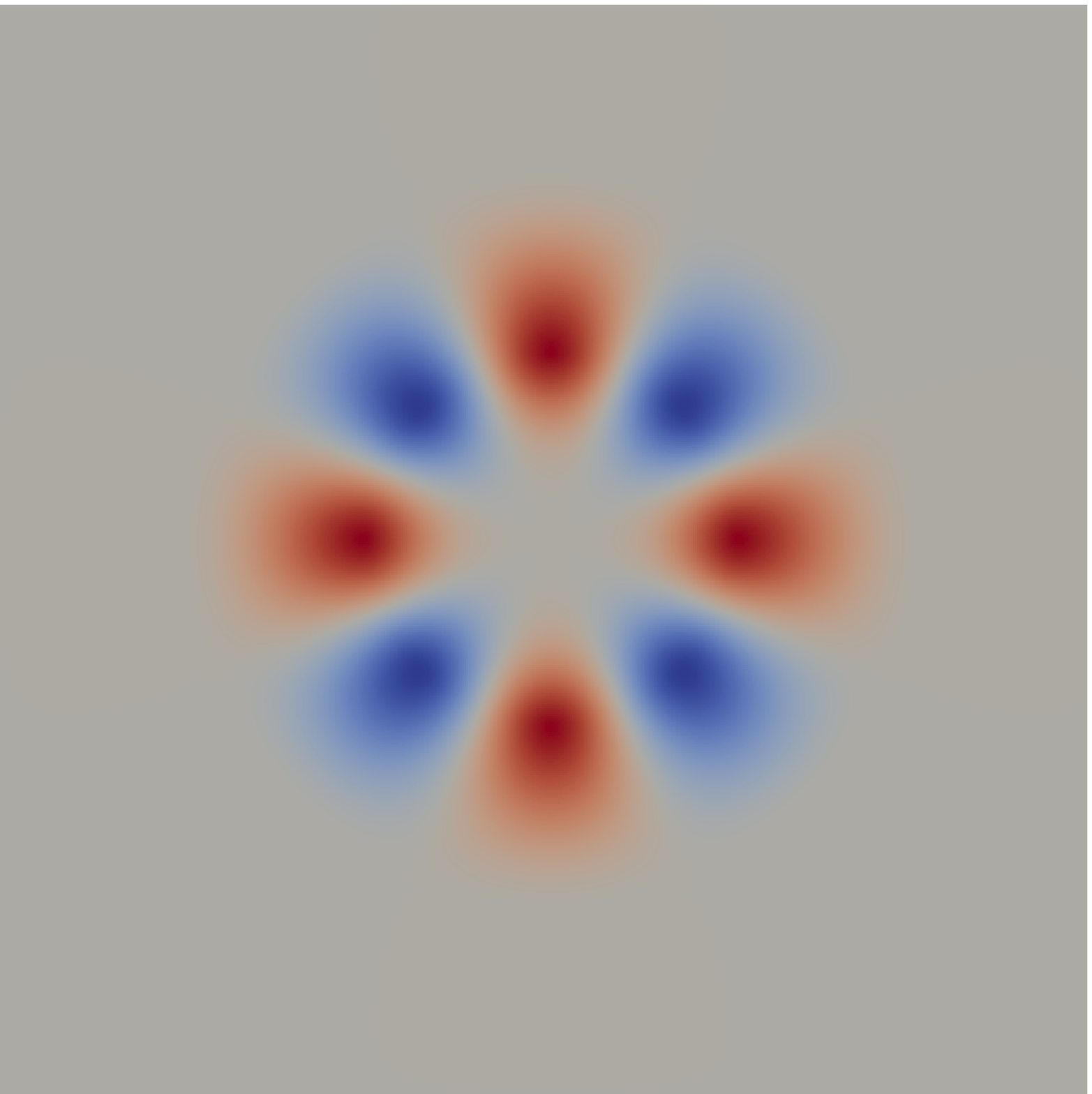}
        \includegraphics[width=0.45\textwidth]{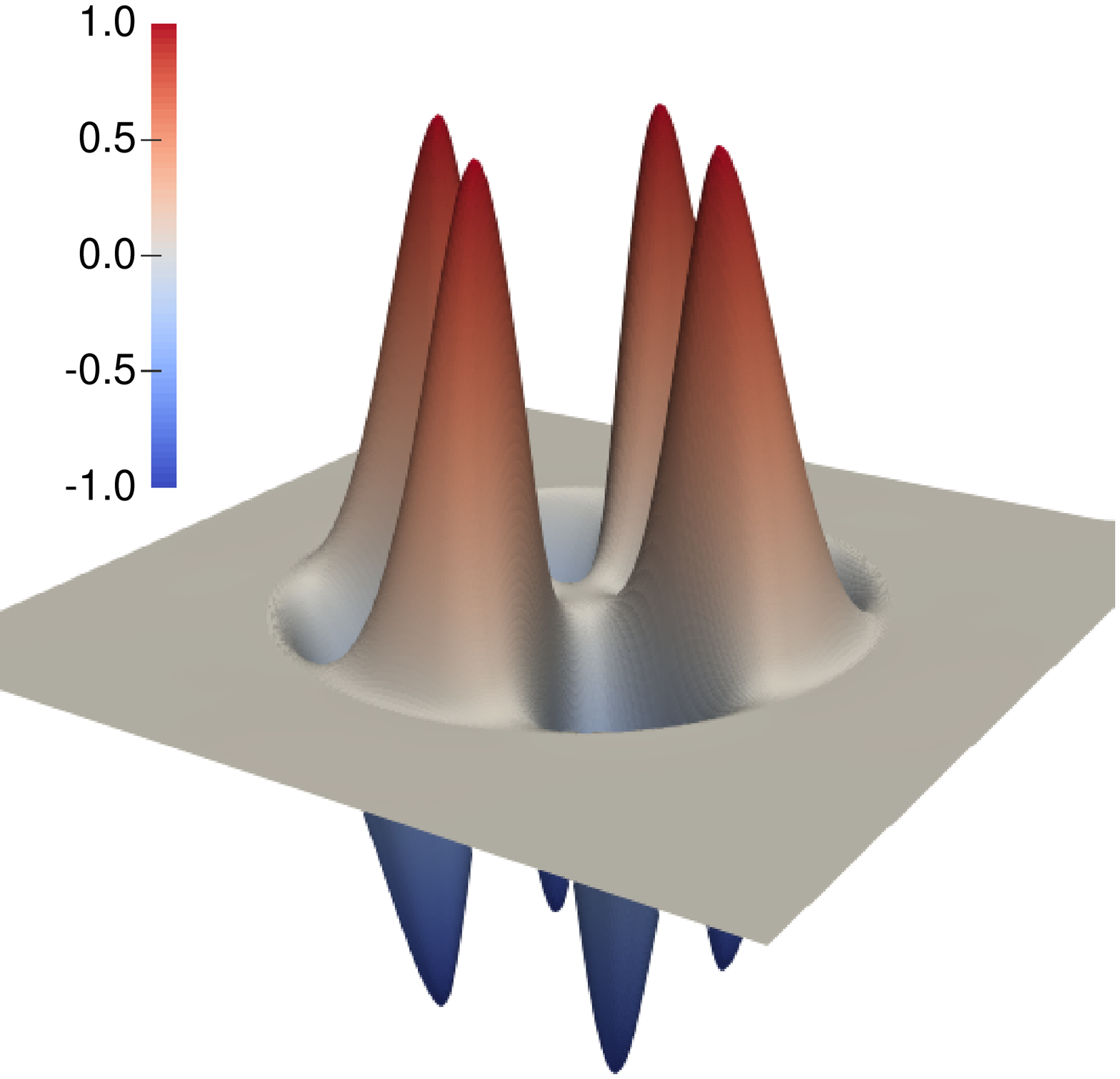}
    \end{center}
    \caption{Exact solution of the symmetric example problem. Left: Top view. Right: Rendered 3D view.}
    \label{fig:michellSolution}
\end{figure}

\subsection{Discretization, quadrature, and error measurement}
For the finite element discretization $S_h \subseteq H_0^2(\Omega)$
we divide $\Omega$ into uniform squares with edges of length $h$
and equip the resulting grid with a Bogner--Fox--Schmit finite element basis.
The boundary conditions over $\partial\Omega$ are enforced by setting the
corresponding degrees of freedom to zero while the boundary conditions over the
$\Gamma_i$ are replaced by penalized constraints.
The resulting soft curve problem with penalized curve constraints is then
given by \eqref{eq:varDiscreteSoftCurve} while the soft bulk problem with penalized
bulk constraints is given by \eqref{eq:varDiscreteSoftArea} where the
area of the virtual second particle is now given by $B_2 = \Omega \setminus \hat{\Omega}$.

We note that we drop the projection for the constraints on $\Gamma_2$ and $B_2$
because we do not allow variable height there.
While our analysis is only formulated in terms of the variable-height constraints
incorporated using projections, this is in fact no limitation, since all arguments
directly carry over to the case without these projections.


Since the assembly of the bilinear forms $b^i_\Gamma(\cdot,\cdot)$
and $b_B(\cdot,\cdot)$ involve integrals over the curves $\Gamma_i$
or the nontrivial domains $B_i$, respectively, we briefly mention
how these can be approximated.
Integrals over full grid cells are evaluated exactly using standard
tensor Gaussian quadrature rules.
For quadrature over the $\Gamma_i$ we use a trigonometric Gauss quadrature
as described in \cite{FieVia12} which in our case is both exact for
integration of finite element functions and for integration of $u$.
Integrals over the non-rectangular domains $B_i$ are approximated
using the local parameterization quadrature method introduced in
\cite{Olshanskii2016}, which is no longer exact.

The discretization error is measured in terms of the norm
$\| \Delta (u^h_\eps - u)\|_{L^2(\Omega)}$ on $H_0^2(\Omega)$.
It is important to note that the error cannot be computed accurately
by simple quadrature over the grid elements, because $u$ is
not smooth across the $\Gamma_i$.
Instead we split the integration into the subdomains
$B_1$, $B_2 = \Omega \setminus \hat{\Omega}$, and $\Omega \setminus (B_1 \cup B_2)$
where we apply the quadrature rules described above.

\subsection{Results for the soft curve formulation}
In case of the soft curve formulation we expect convergence with order $O(h^{\frac12})$
due to Theorem~\ref{thm:discretizationErrorSoftCurve} if the penalty parameters
are chosen suitably.
For the numerical examples we selected penalty parameters
$\eps = ( c h^{\lambda_1}, c h )$ with $\lambda_1 \in [1,3]$
according to Theorem~\ref{thm:discretizationErrorSoftCurve}.
The constant was fixed to be $c=10^{-3}$.

Figure~\ref{fig:errorSoftCurve} shows the behavior of the
$H^2$-discretization error over the mesh size $h$ for uniform grids
and penalty parameters with $\lambda_1 \in \{1, 2, 3\}$.
In accordance with Theorem~\ref{thm:discretizationErrorSoftCurve}
we observe convergence of at least order $O(h^{1/2})$.
While the observed rate seems to behave like $O(h^{1/2})$
for $\lambda_1 \in \{2,3\}$ it seems that the rate is slightly
better for $\lambda_1 = 1$. A possible explanation for this
observation is that the error contribution for the term $\|\cdot\|_{b_1}$
in the proof of Theorem~\ref{thm:discretizationErrorSoftCurve}
can be improved by smaller values of $\lambda_1$.
However, this does not lead to a better theoretical
bound due to other dominating terms.
\begin{figure}
    \begin{center}
        \includegraphics[width=0.31\textwidth]{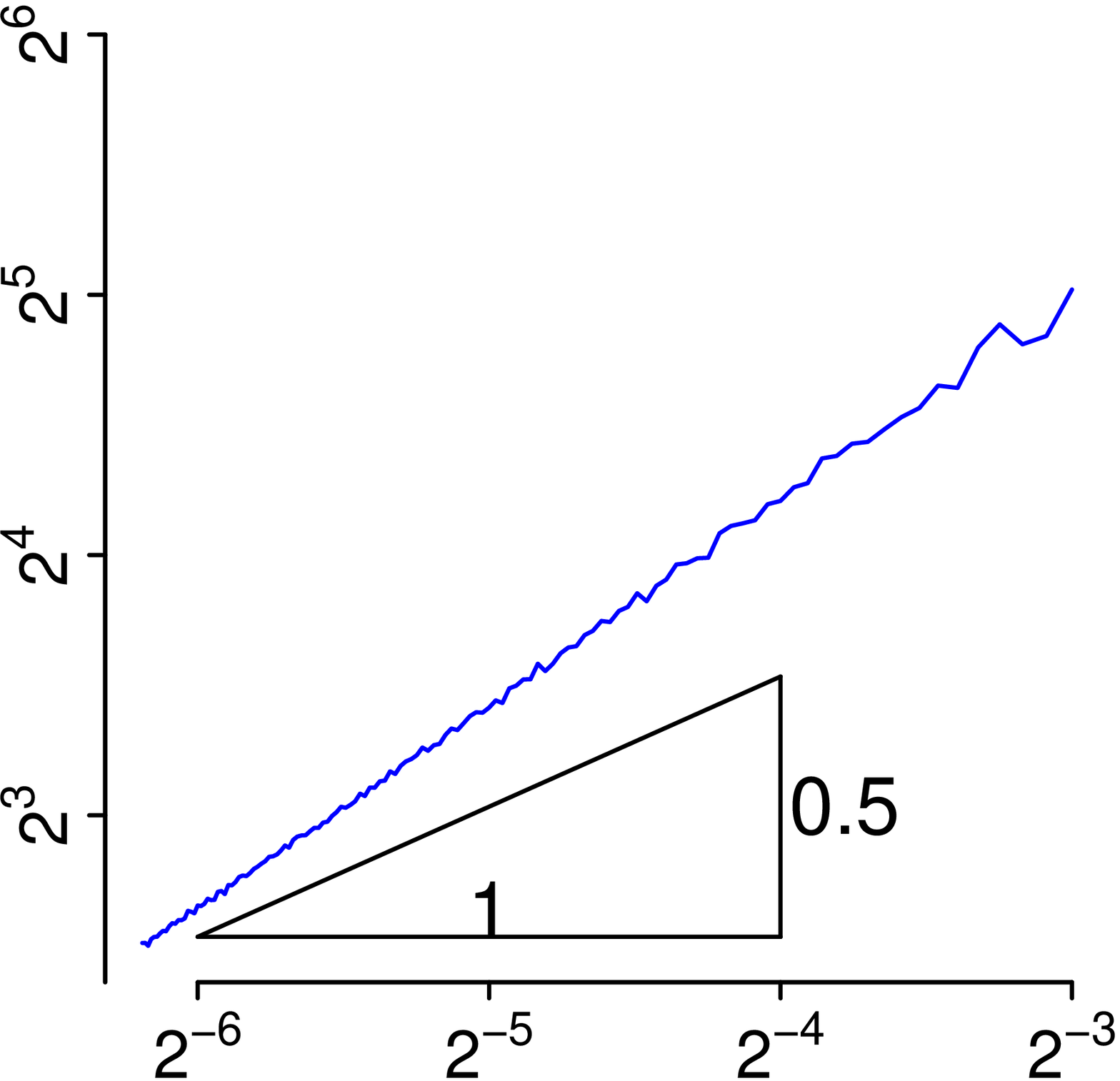}
        \includegraphics[width=0.31\textwidth]{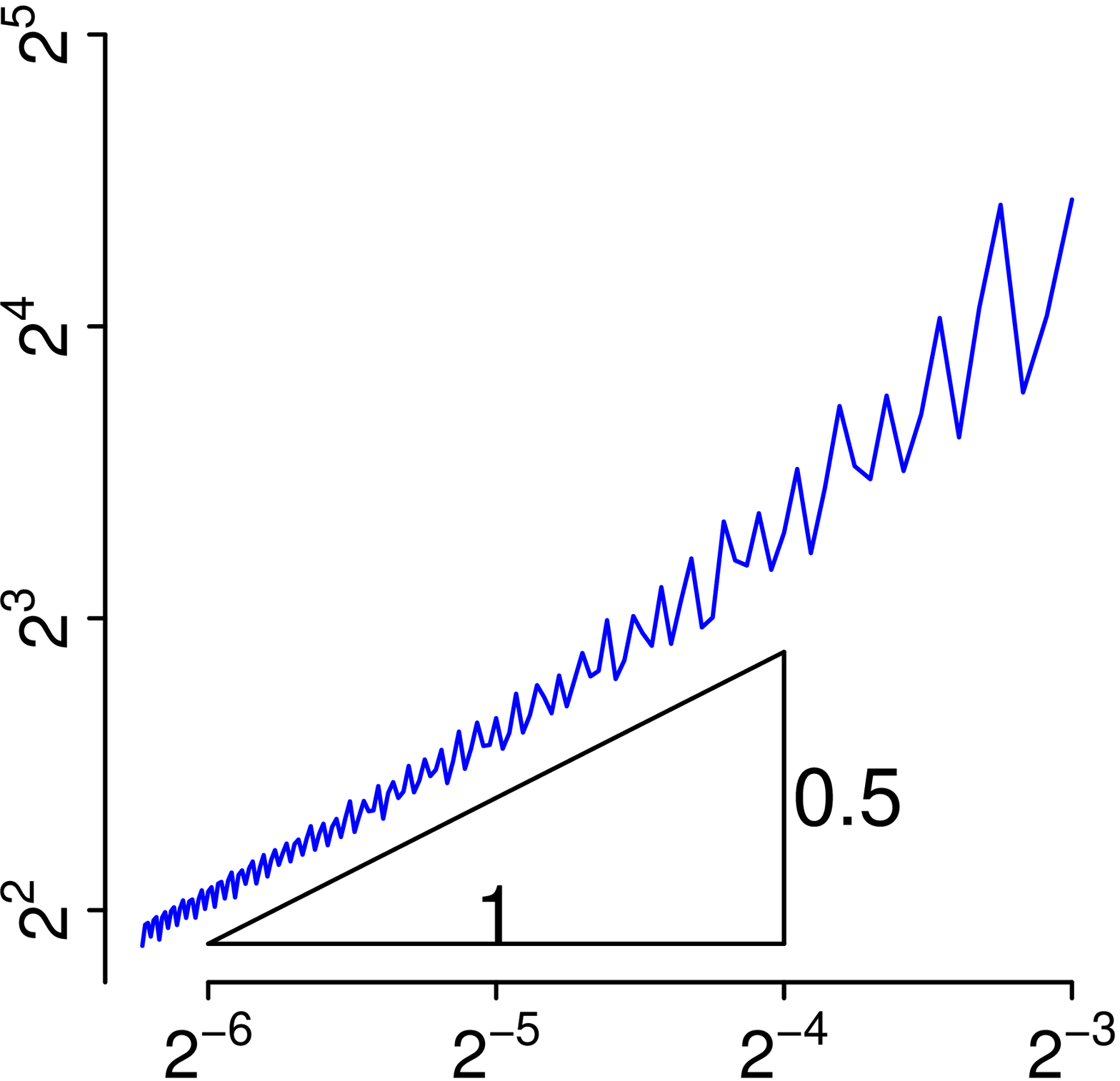}
        \includegraphics[width=0.31\textwidth]{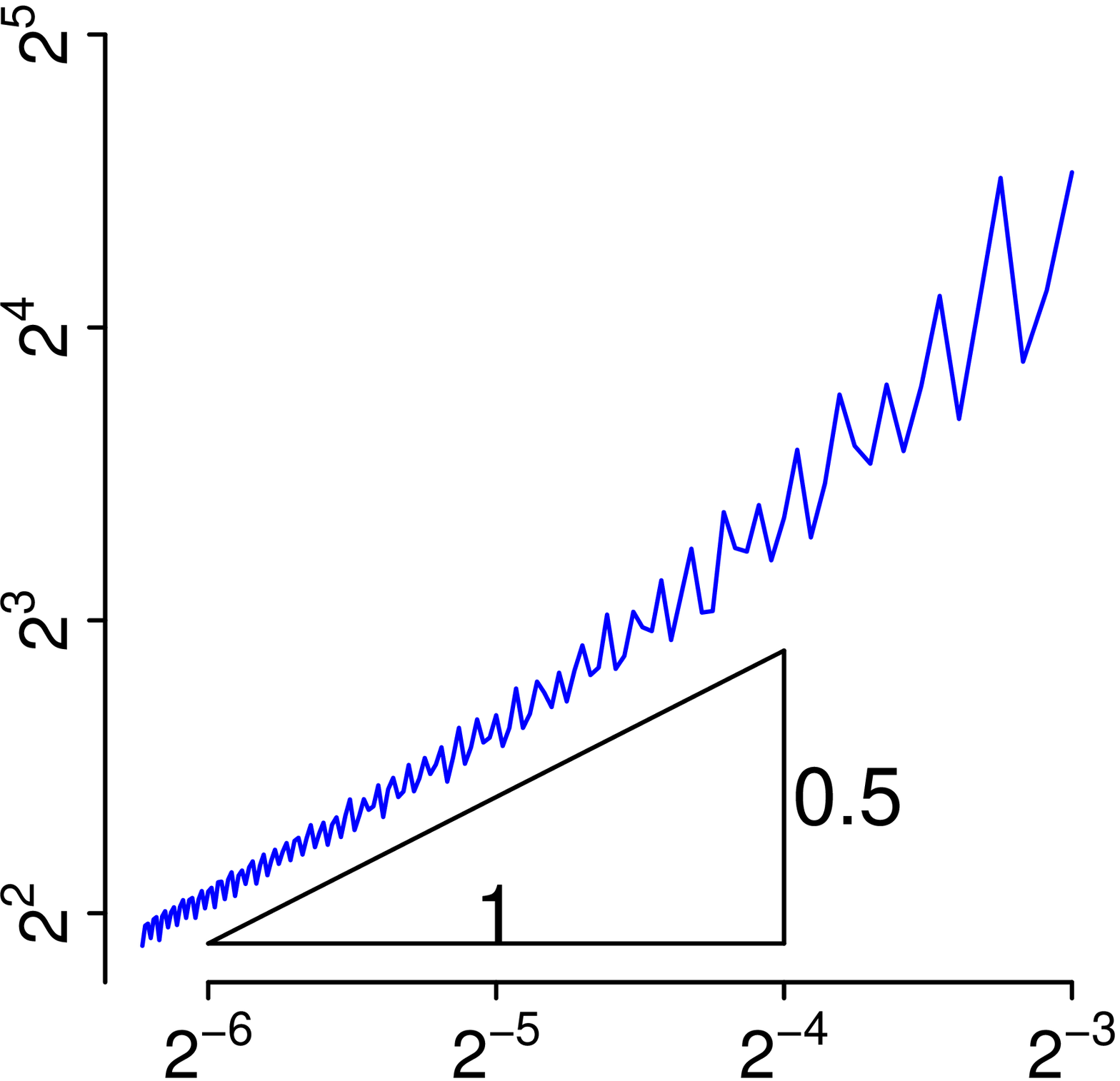}
    \end{center}
    \caption{%
        $H^2$-errors for the soft curve formulation for %
        $\lambda=1$, $\lambda=2$, and $\lambda=3$ (from left to right) %
        over the grid size $h = 1/N$ for $N = 8,\dots,75$.}
    \label{fig:errorSoftCurve}
\end{figure}

Figure~\ref{fig:H1errorSoftCurve} and Figure~\ref{fig:L2errorSoftCurve} show the $H^1$- and $L^2$-discretization errors,
respectively, for the same set of example problems.
In both cases we observe convergence with the order $O(h)$.
While improved convergence order for weaker norms is a well-known property
of many discretizations, we emphasize that this is not covered by the theory
presented here and may be considered in the future with a refined analysis.
\begin{figure}
    \begin{center}
        \includegraphics[width=0.31\textwidth]{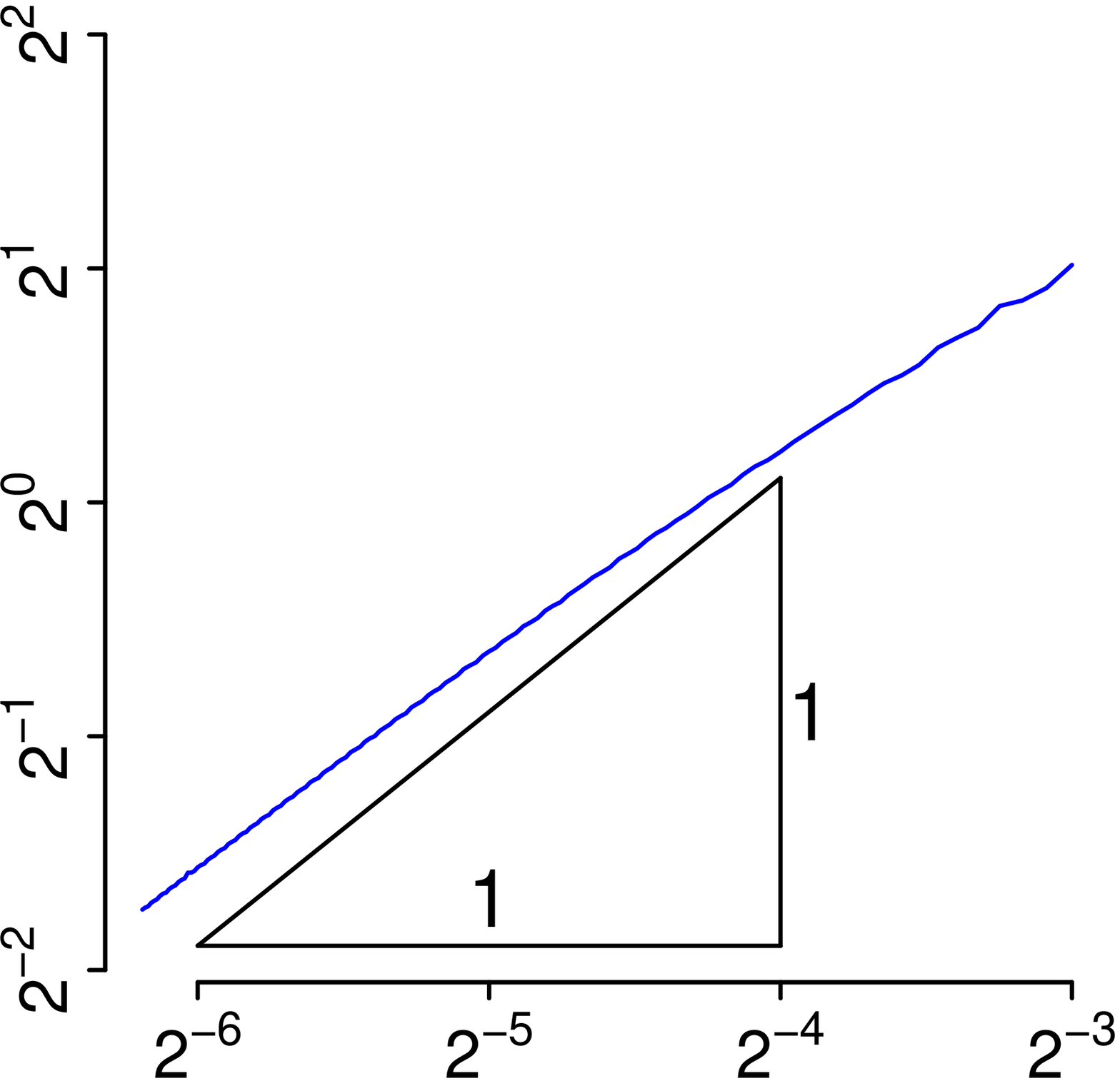}
        \includegraphics[width=0.31\textwidth]{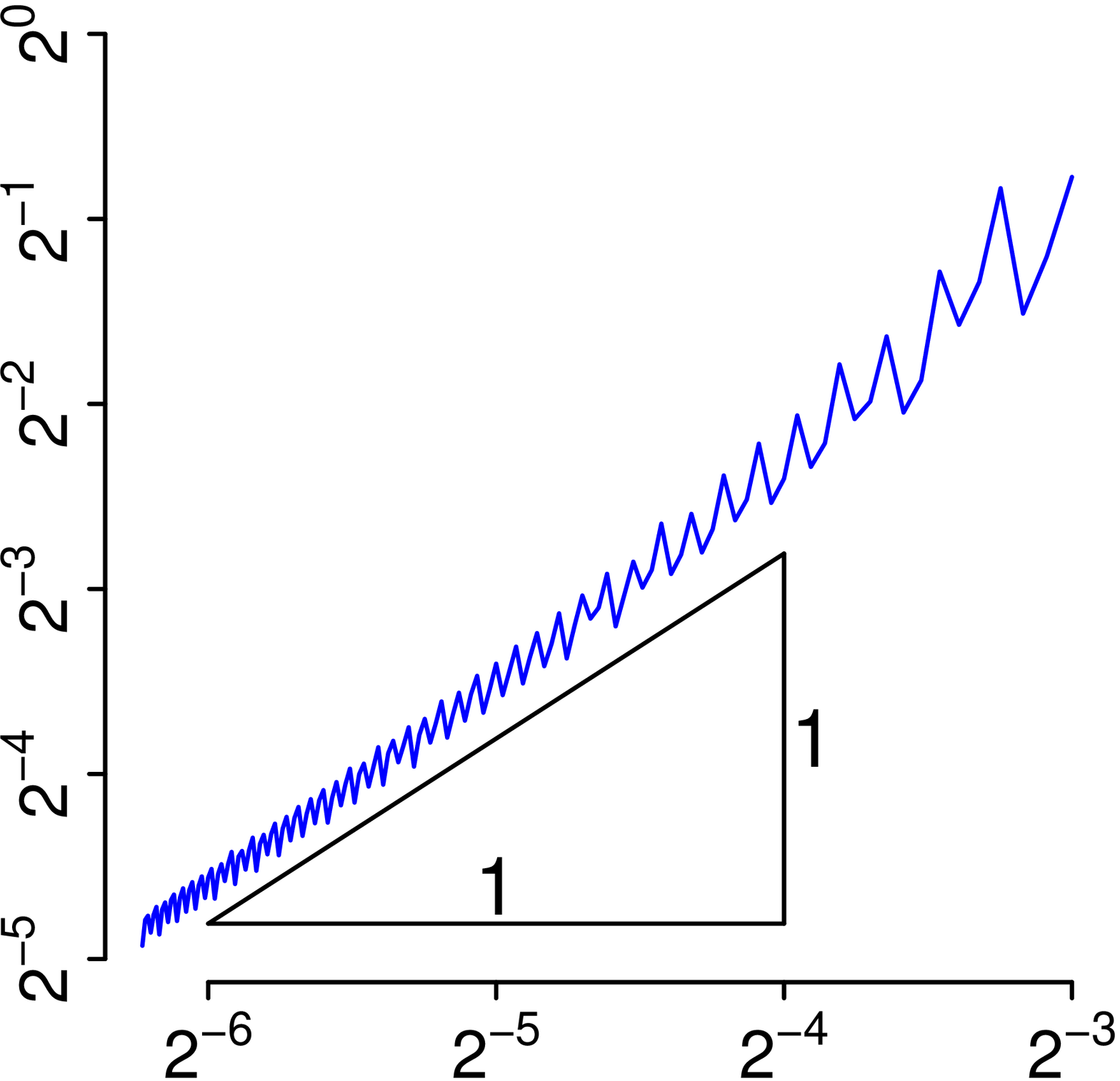}
        \includegraphics[width=0.31\textwidth]{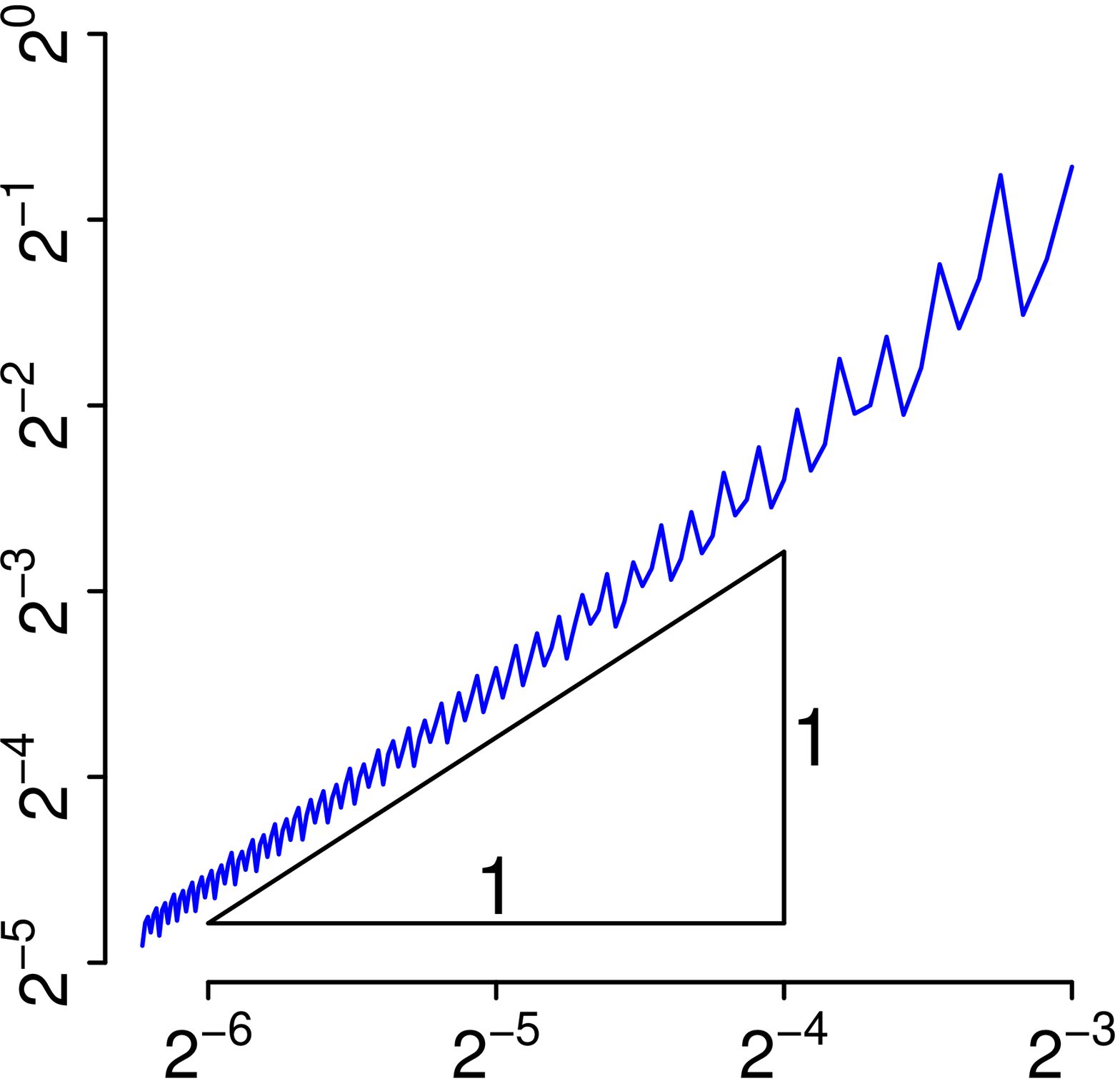}
    \end{center}
    \caption{%
        $H^1$-errors for the soft curve formulation for %
        $\lambda=1$, $\lambda=2$, and $\lambda=3$ (from left to right) %
        over the grid size $h = 1/N$ for $N = 8,\dots,75$.}
    \label{fig:H1errorSoftCurve}
\end{figure}
\begin{figure}
    \begin{center}
        \includegraphics[width=0.31\textwidth]{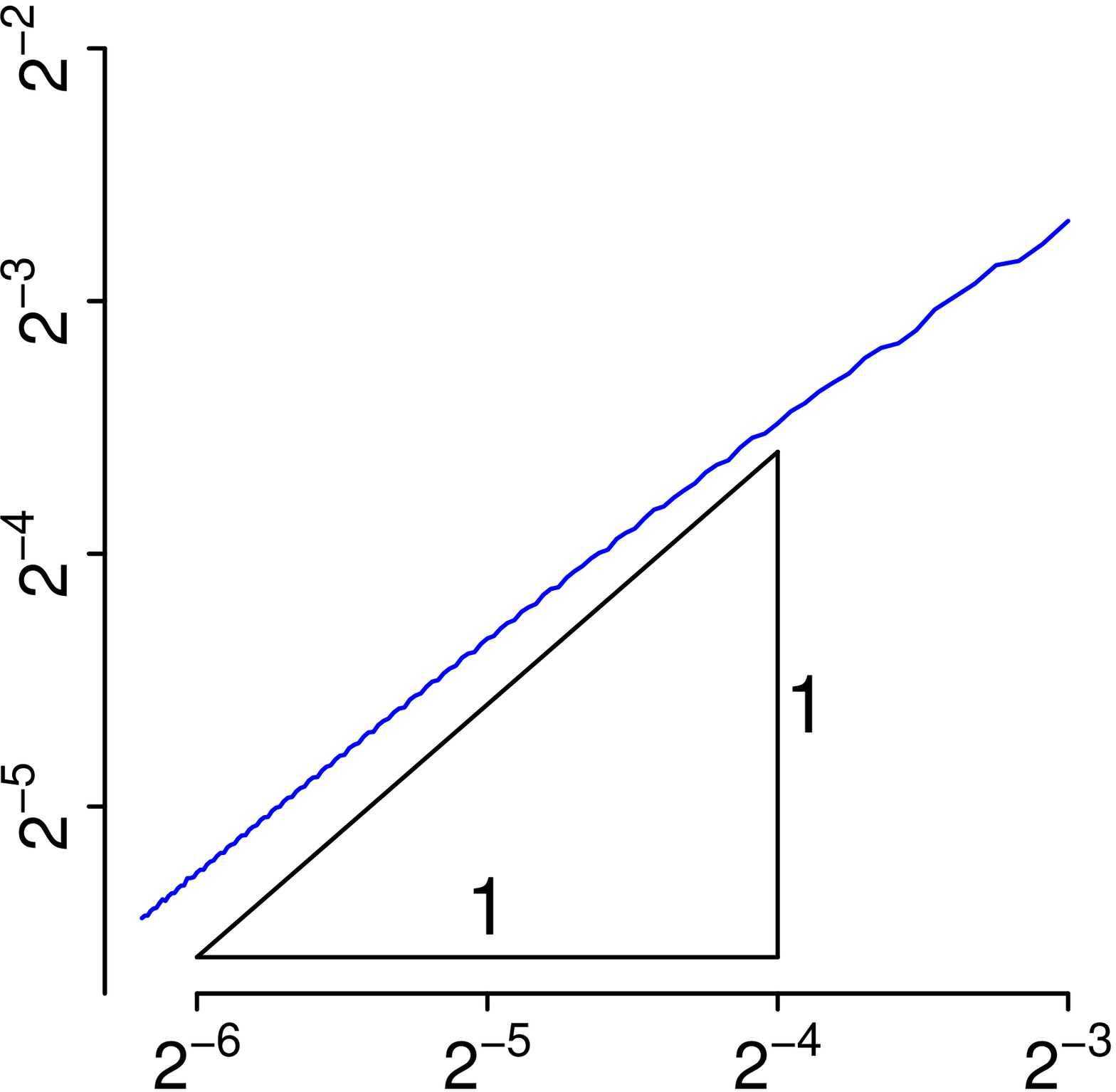}
        \includegraphics[width=0.31\textwidth]{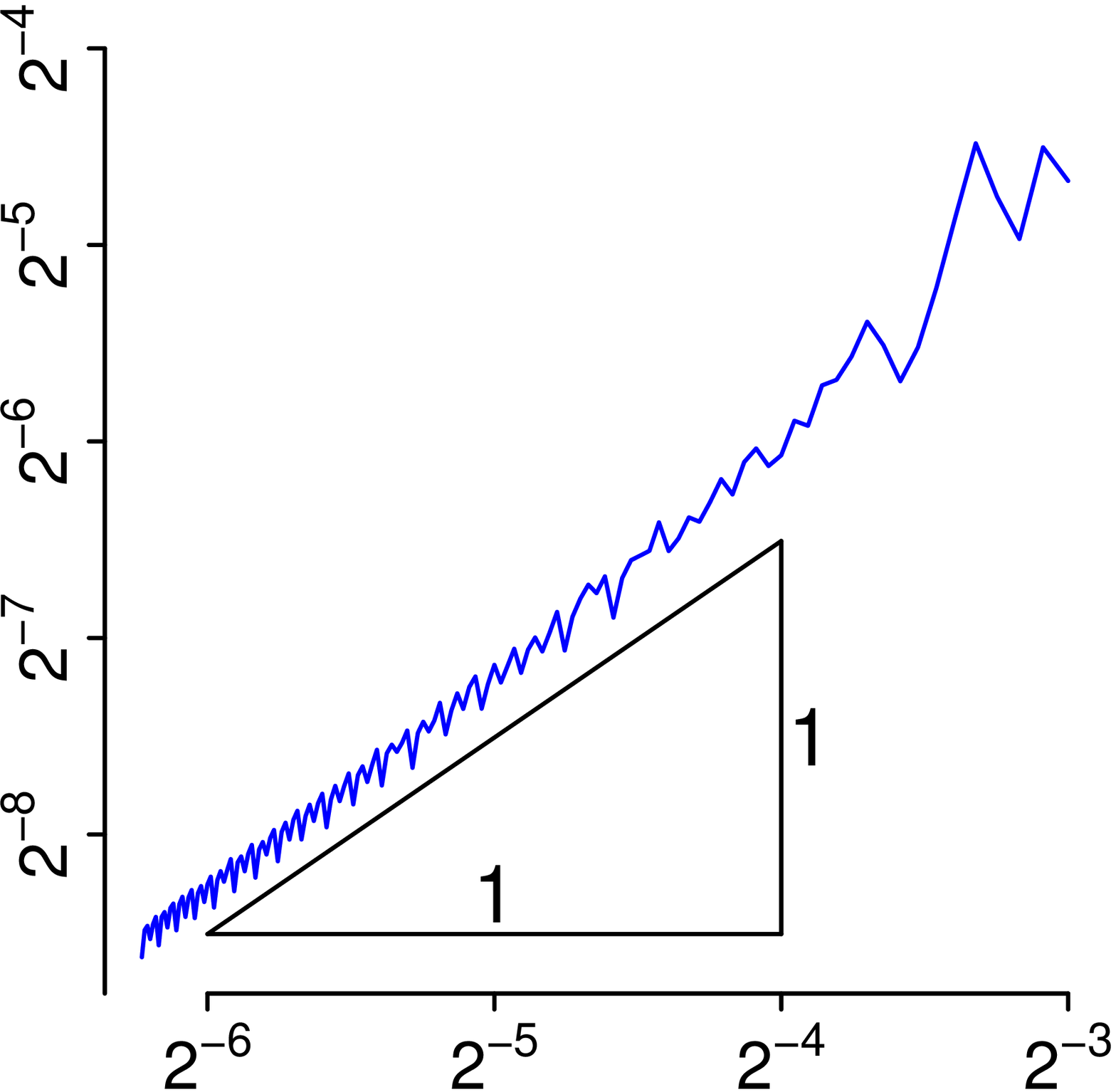}
        \includegraphics[width=0.31\textwidth]{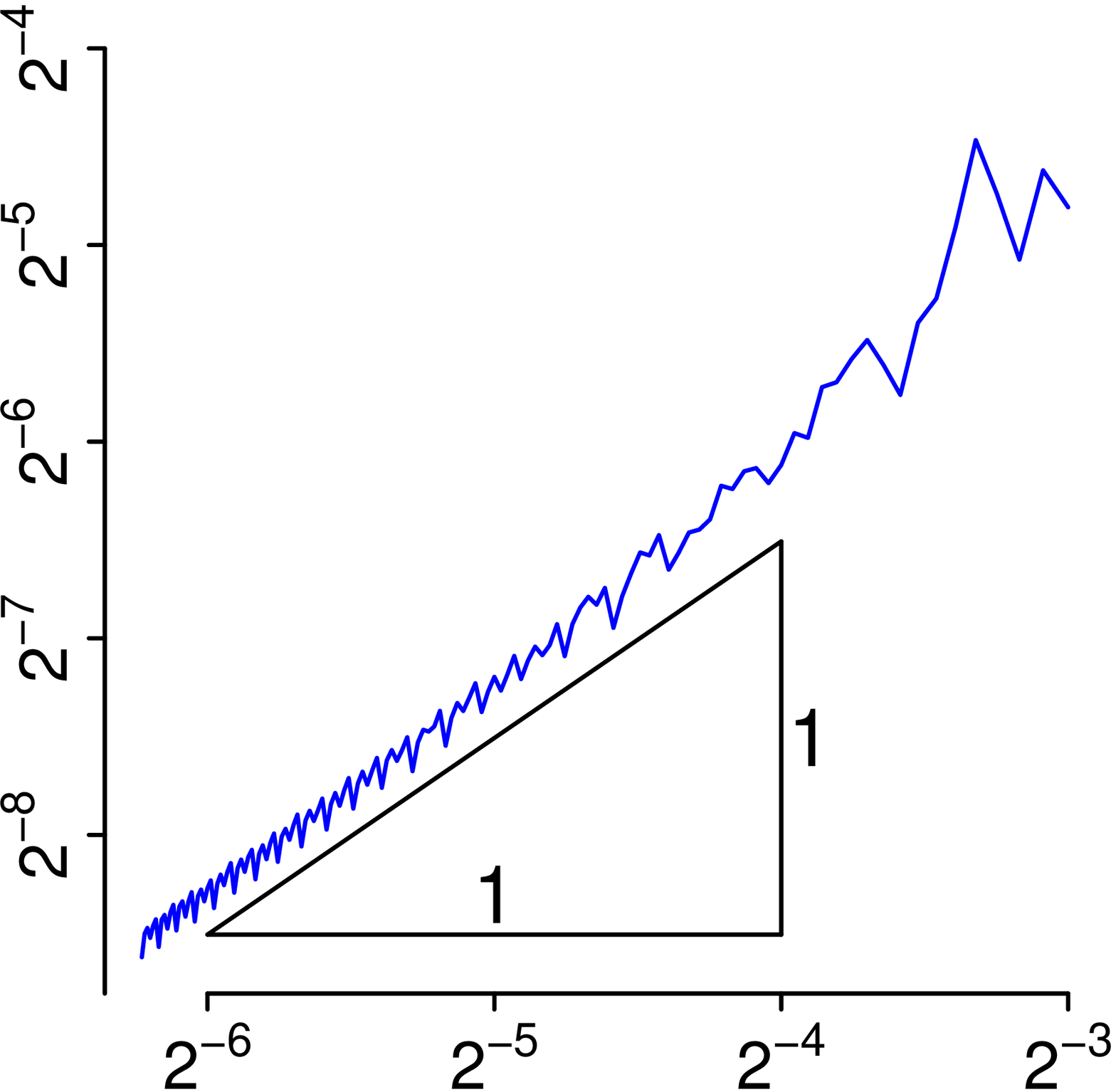}
    \end{center}
    \caption{%
        $L^2$-errors for the soft curve formulation for %
        $\lambda=1$, $\lambda=2$, and $\lambda=3$ (from left to right) %
        over the grid size $h = 1/N$ for $N = 8,\dots,75$.}
    \label{fig:L2errorSoftCurve}
\end{figure}

\subsection{Results for the soft bulk formulation}
Because we use uniform grids and do not refine with regard to the geometry
of the $\Gamma_i$ we lose control over the inverse estimate assumption
\eqref{eq:inverseEstimate}.
Thus we cannot prove condition \eqref{eq:captureCondition} in general.
Assuming that \eqref{eq:captureCondition} still holds true, we could
use Theorem~\ref{thm:discretizationErrorSoftArea} in case of exact integration.
However, as discussed above, we only approximate integrals using the
quadrature from \cite{Olshanskii2016}. If this is accurate enough,
we can in view of the Strang-type result in Proposition~\ref{prop:strang}
expect convergence of order $O(h^{1/2})$ when using
$\eps = c h^{4-2s}$ and the penalty norm $\|\cdot\|_{H^s(B_i)}$ for $s\in [0,1]$.
For our numerical experiments we considered $c = 10^{-3}$ and $s \in \{0,1\}$.

Figure~\ref{fig:errorSoftArea} shows the behaviour of the
discretization error over the mesh size $h$ for uniform grids
and $s\in \{0, 1\}$. Again the observed order is in accordance
with the expected order  $O(h^{1/2})$.
However, especially for $s=1$ this is perturbed by strong oscillations.
This is maybe due to the fact that the constant resulting from
\eqref{eq:captureCondition} is not uniformly bounded and can strongly
vary depending on how the curve $\Gamma$ intersects mesh elements.
Again the $H^1$- and $L^2$-errors depicted in Figure~\ref{fig:H1errorSoftArea}
and Figure~\ref{fig:L2errorSoftArea}, respectively, exhibit
a similar behavior but the improved convergence order $O(h)$.
\begin{figure}
    \begin{center}
        \includegraphics[width=0.31\textwidth]{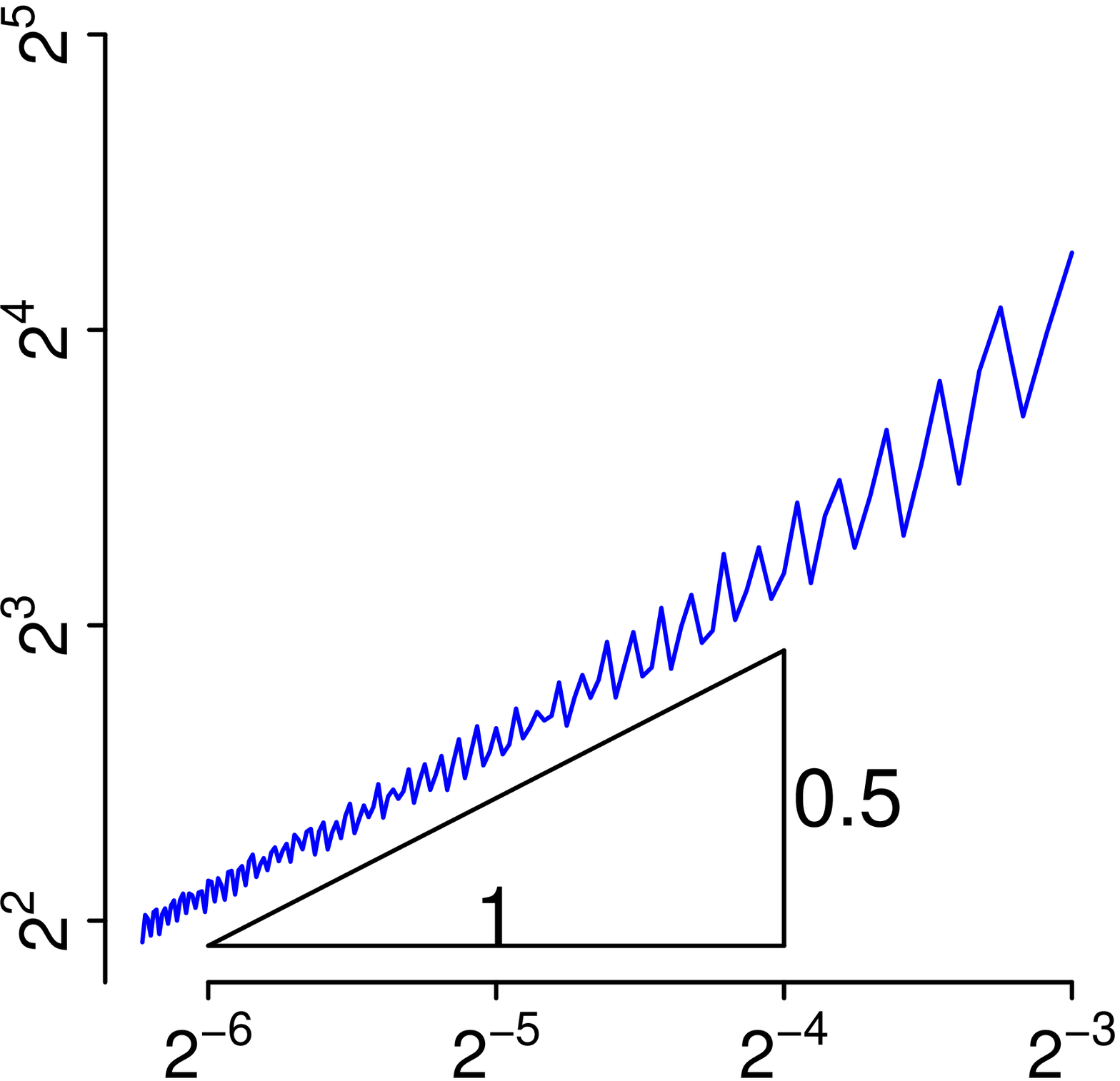}
        \hspace{0.7cm}
        \includegraphics[width=0.31\textwidth]{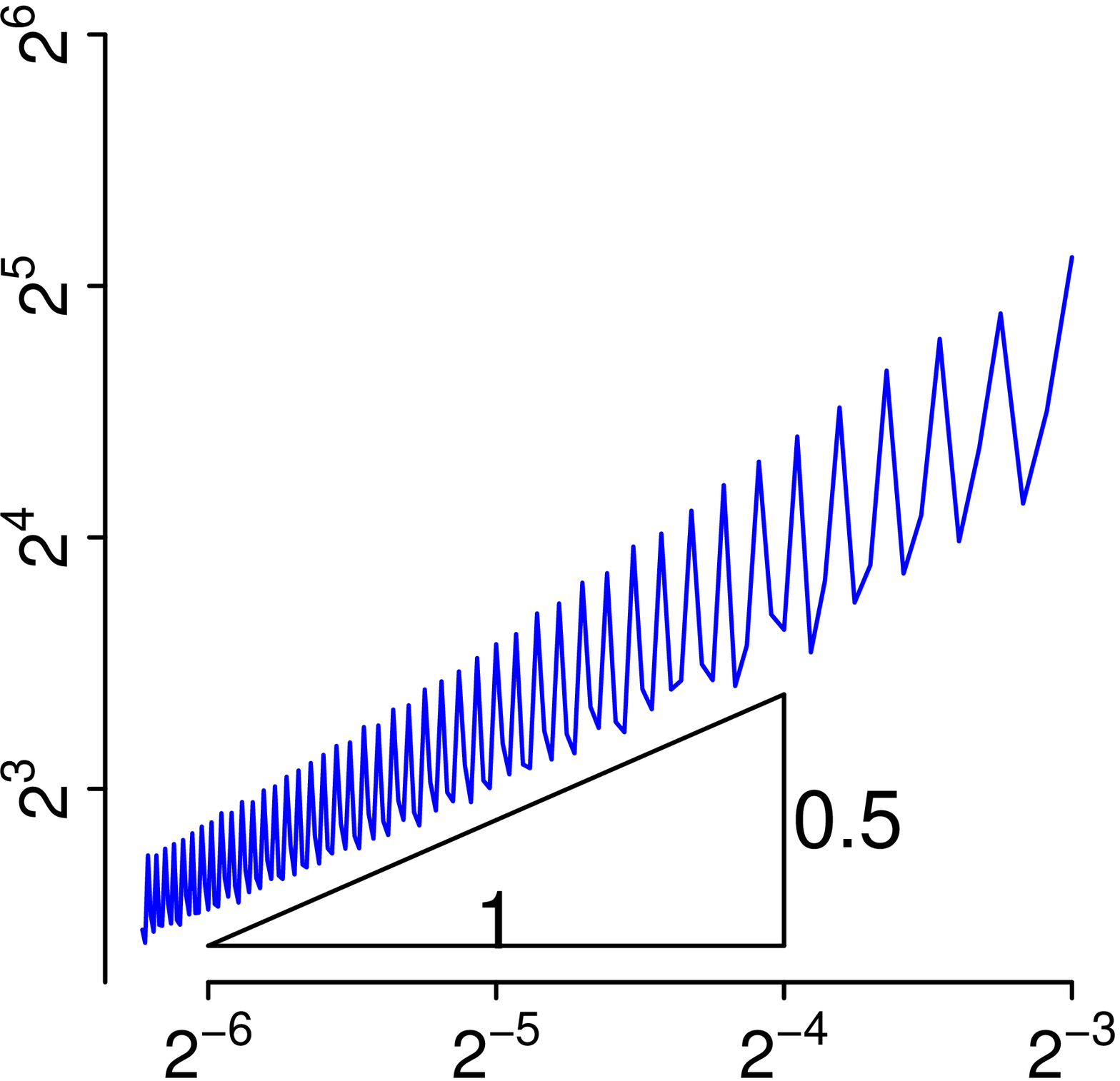}
    \end{center}
    \caption{%
        $H^2$-errors for the soft bulk formulation for %
        $s=0$ and $s=1$ (from left to right) %
        over the grid size $h = 1/N$ for $N = 8,\dots,75$.}
    \label{fig:errorSoftArea}
\end{figure}
\begin{figure}
    \begin{center}
        \includegraphics[width=0.31\textwidth]{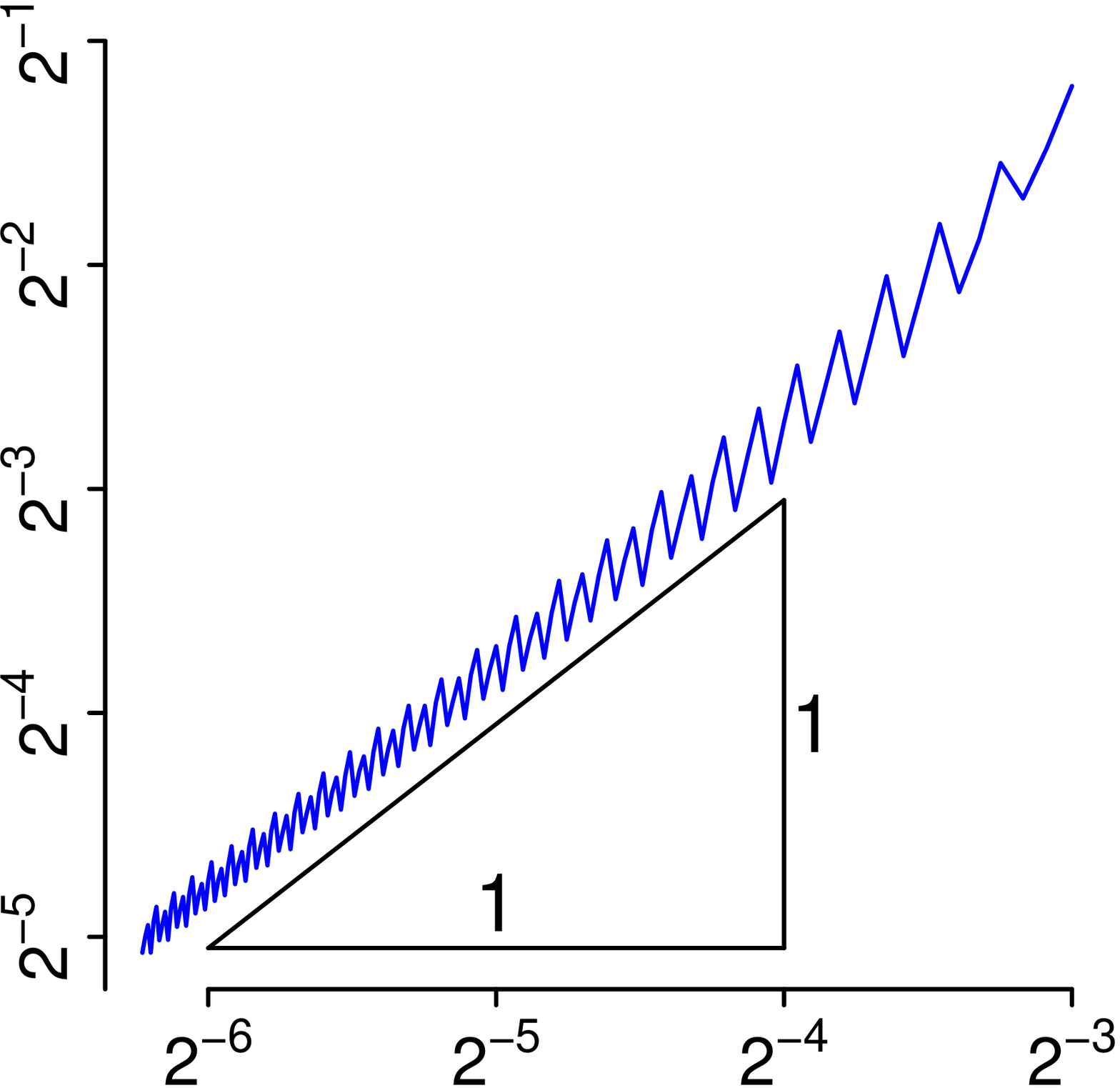}
        \hspace{0.7cm}
        \includegraphics[width=0.31\textwidth]{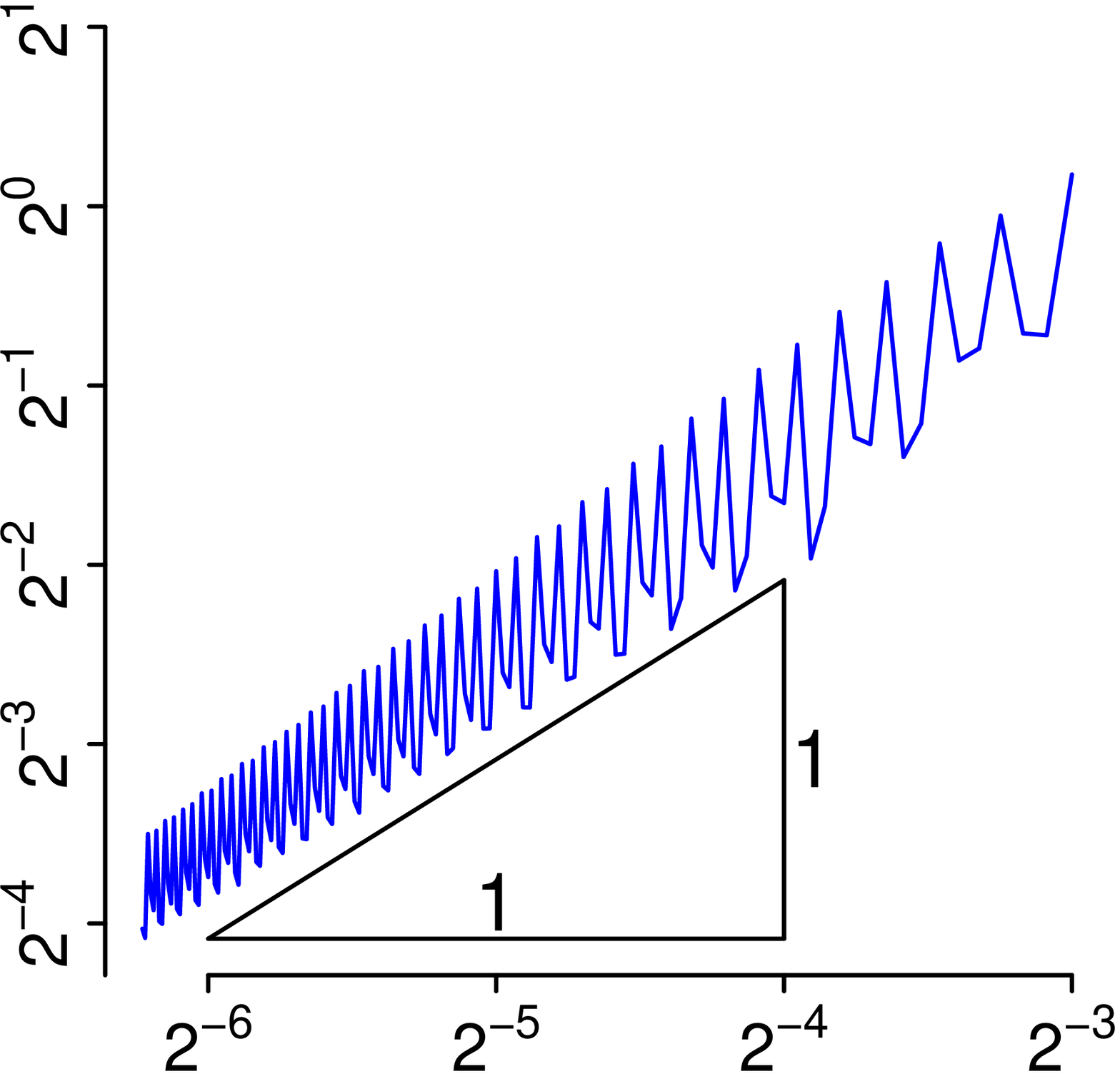}
    \end{center}
    \caption{%
        $H^1$-errors for the soft bulk formulation for %
        $s=0$ and $s=1$ (from left to right) %
        over the grid size $h = 1/N$ for $N = 8,\dots,75$.}
    \label{fig:H1errorSoftArea}
\end{figure}
\begin{figure}
    \begin{center}
        \includegraphics[width=0.31\textwidth]{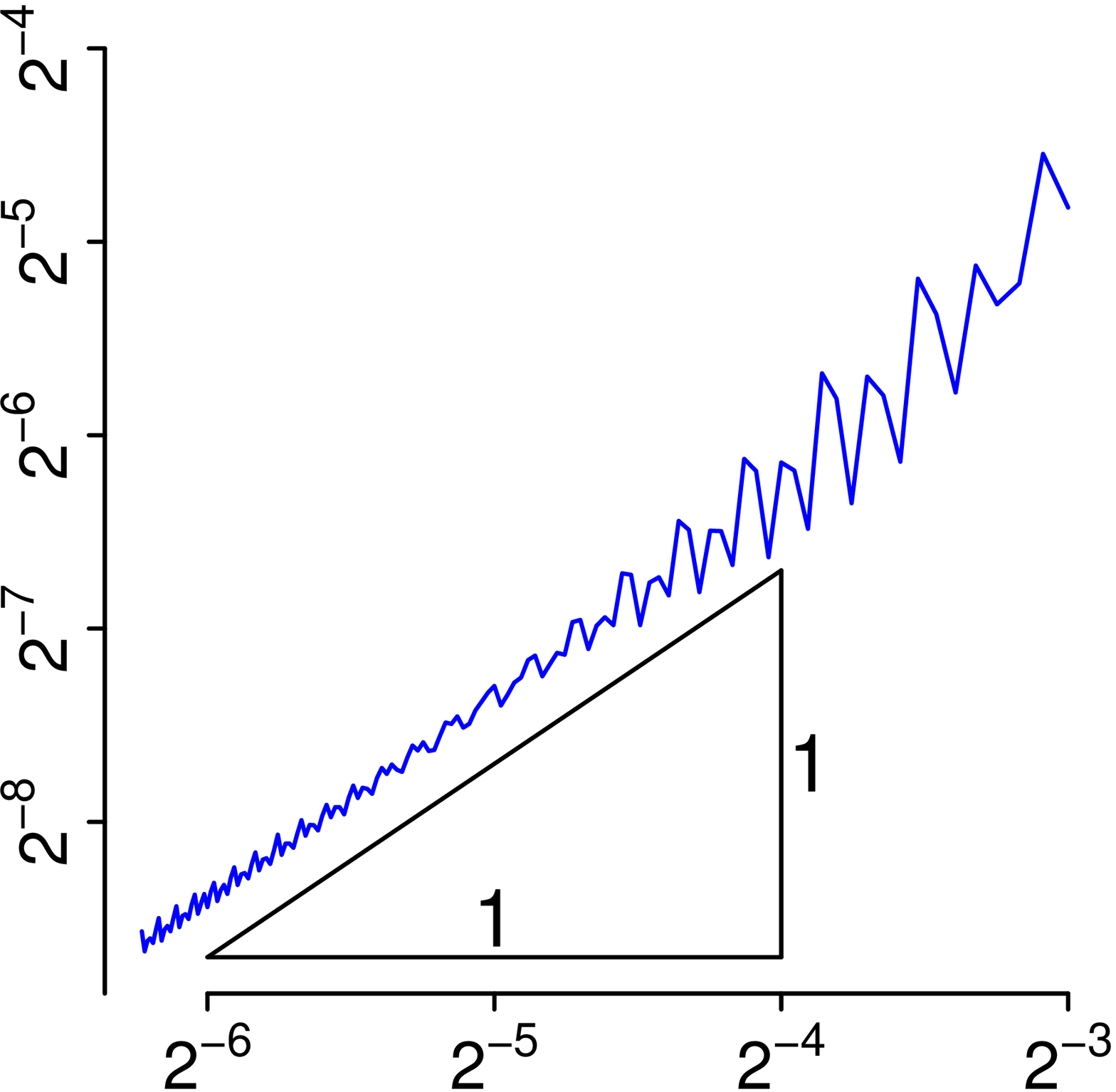}
        \hspace{0.7cm}
        \includegraphics[width=0.31\textwidth]{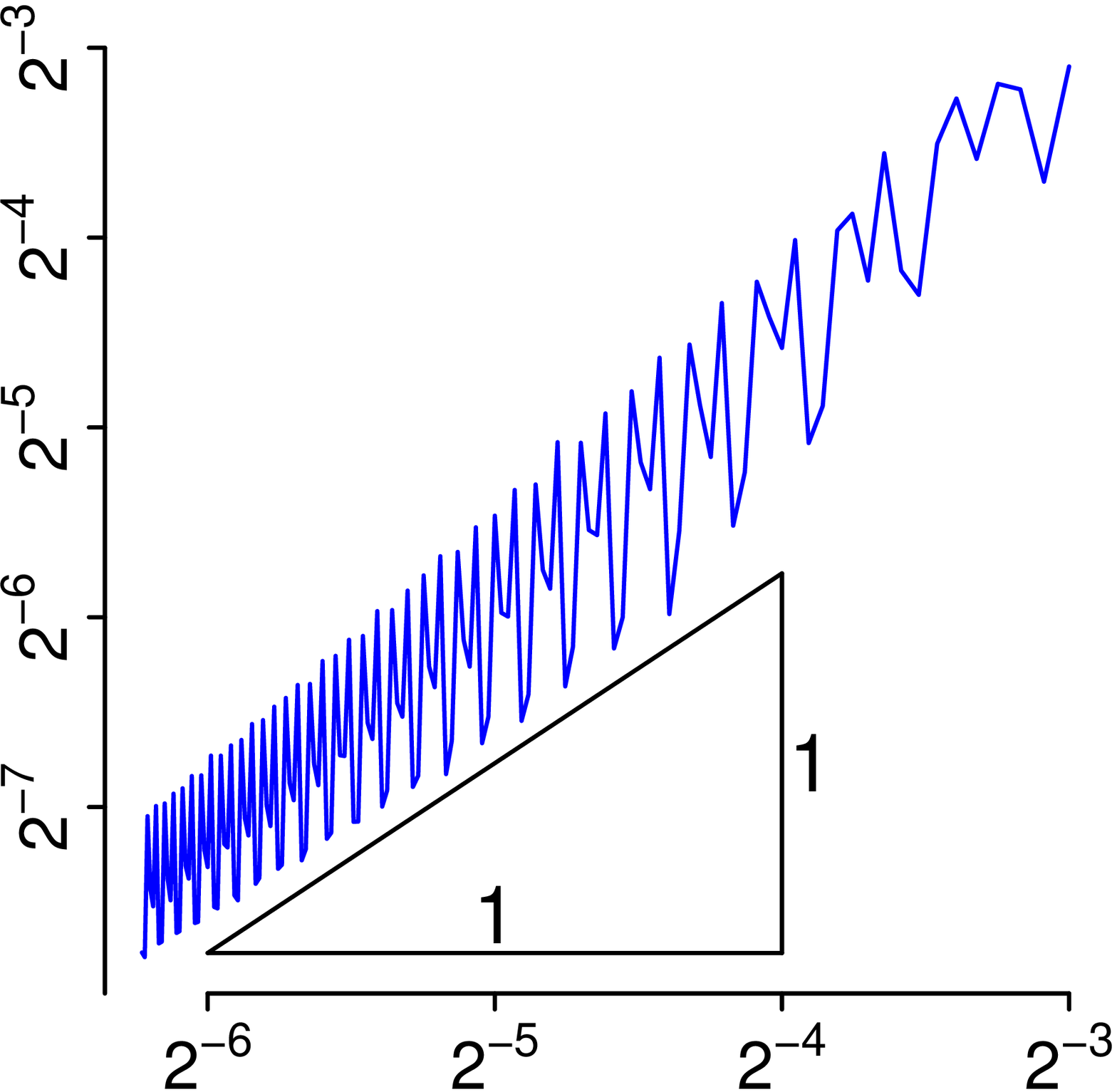}
    \end{center}
    \caption{%
        $L^2$-errors for the soft bulk formulation for %
        $s=0$ and $s=1$ (from left to right) %
        over the grid size $h = 1/N$ for $N = 8,\dots,75$.}
    \label{fig:L2errorSoftArea}
\end{figure}

\subsection{A non-symmetric example problem}
While the first example allowed to compute the exact solution due to its symmetry properties,
we will now assess the discretization for a non-symmetric example with several
particles but without known exact solution.

We consider the domain $\Omega = [-1,1]^2$ with homogeneous Dirichlet boundary
conditions on $\partial \Omega$ and we again select the parameters
\begin{align*}
    \kappa &= 1, &
    \sigma &= 0.
\end{align*}
This time we consider four elliptical particles embedded non-symmetrically into
the membrane domain $\Omega$.
The ellipses' major and minor axes have length $0.4$ and $0.2$, respectively.
Their positions and orientations are depicted in Figure~\ref{fig:genericSolution}.

Regarding the particle--membrane coupling conditions we assume that all particles
have a constant height profile $f^i_1 = 0$ and a constant slope $f^i_2$
varying for different particles. More precisely, we use
\begin{align*}
    f^1_2 &= 1, &
    f^2_2 &= 1, &
    f^3_2 &= -1, &
    f^4_2 &= -1.
\end{align*} 
The solution resulting from that setup is depicted in \cref{fig:genericSolution}.
\begin{figure}
    \begin{center}
        \includegraphics[width=0.45\textwidth]{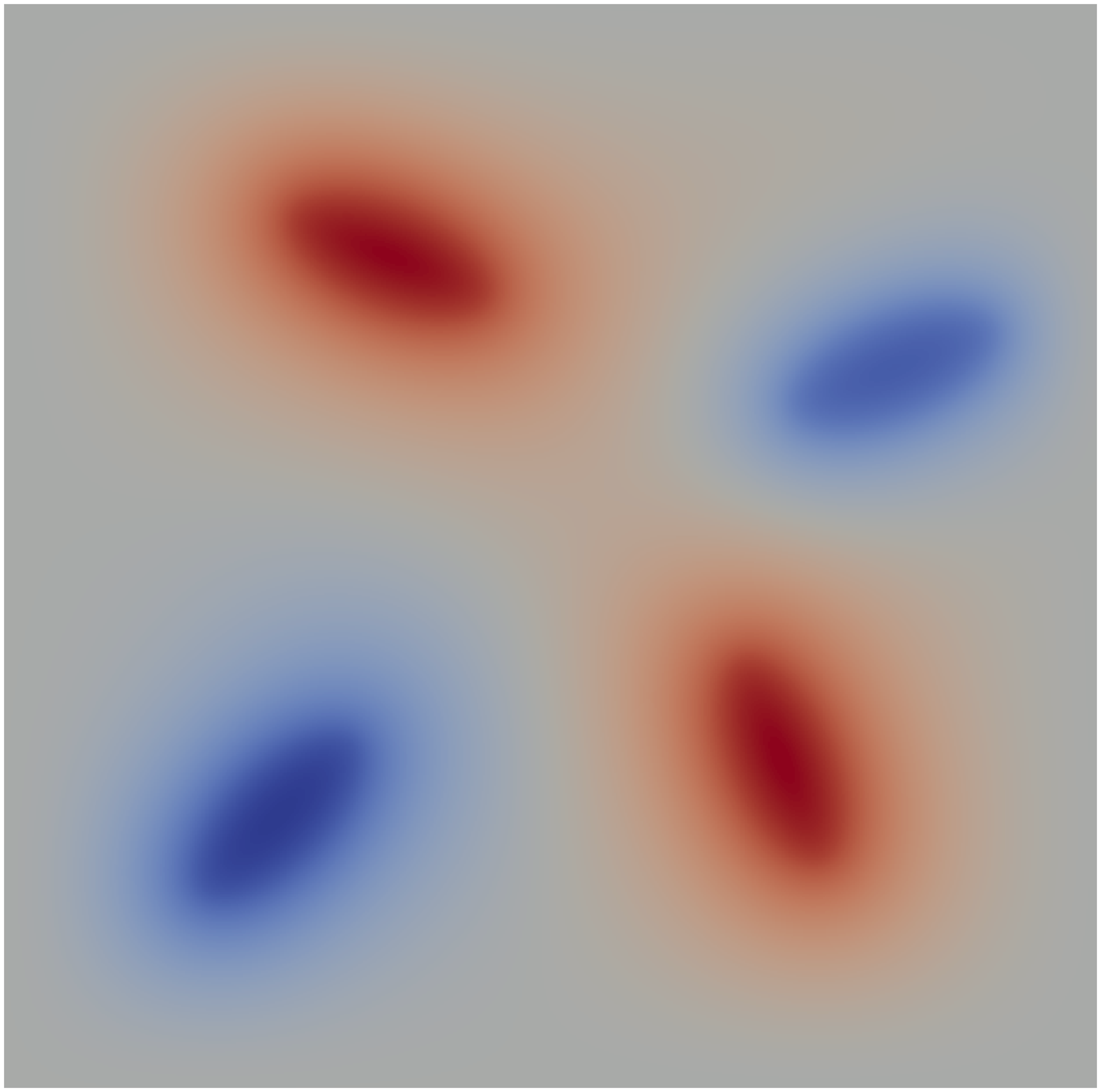}
        \includegraphics[width=0.45\textwidth]{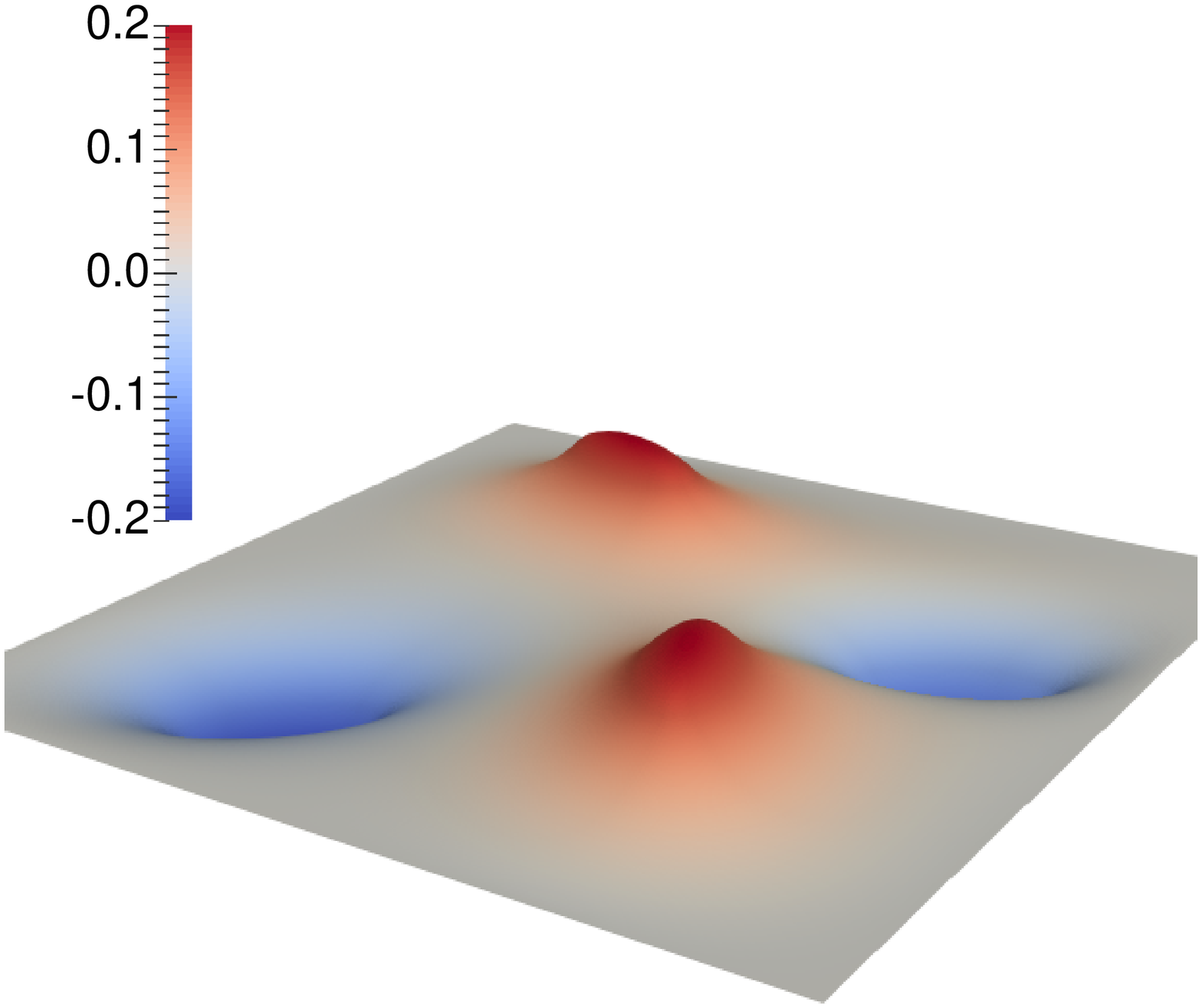}
    \end{center}
    \caption{Solution of the non-symmetric example problem. Left: Top view. Right: Rendered 3D view.}
    \label{fig:genericSolution}
\end{figure}

Since we do not know the exact solution for this problem we estimate the error
by comparing our discrete solutions to
a reference solution computed on a finer grid.
We computed the solutions for the different problem formulations on uniform
grids with mesh size $h=2^{-k}$
for $k= 3,\dots,6$ while the reference solution was computed on a uniform
grid with mesh size $h=2^{-8}$
using the soft curve formulation with $\eps = 10^{-3}(h^3,h)$.

The (approximate) discretization errors
for soft curve and soft bulk formulation are shown in
Figure~\ref{fig:errorSoftCurveGeneric} and Figure~\ref{fig:errorSoftAreaGeneric}, respectively.
As before we used $\eps = (ch^\lambda, ch)$ with $\lambda \in \{1,2,3\}$
for the soft curve formulation
and $\eps = ch^{4-2s}$ with  $s \in \{0,1\}$ for the soft bulk formulation.
In both cases we selected $c = 10^{-3}$.

For the soft bulk formulation the projection $P_{B}$ may lead to a
densely populated matrix for those degrees of freedom located inside
of the particles.
To avoid the resulting computational effort for $s=1$ we used the $H^1(B_i)$-norm in the form
\begin{align*}
    \|v\|_{H^1(B_i)}^2 = \|\nabla v\|_{L^2(B_i)}^2 + \Bigl(\int_{B_i} v \, dx\Bigr)^2.
\end{align*}
Then the penalty term that incorporates the $H^1(B_i)$--projections takes the form
\begin{align*}
    b_{B}(w,v) = (\nabla w, \nabla v)_{L^2(B)},
\end{align*}
which results in a sparse matrix again.

For both formulations, soft curve and soft bulk, we observe a rate that is slightly better
than the expected order of convergence, namely $O(h^{1/2})$.
Note that we restricted the mesh size $h=2^{-k}$ to powers of two in this example
to simplify comparison with the reference solution.
Furthermore, in the case of soft bulk constraints with $s=0$
we were not able to carry out the computations for $h=2^{-6}$ due to hardware restrictions.
This explains the much smaller number of data points in the plots.
\begin{figure}
    \begin{center}
        \includegraphics[width=0.31\textwidth]{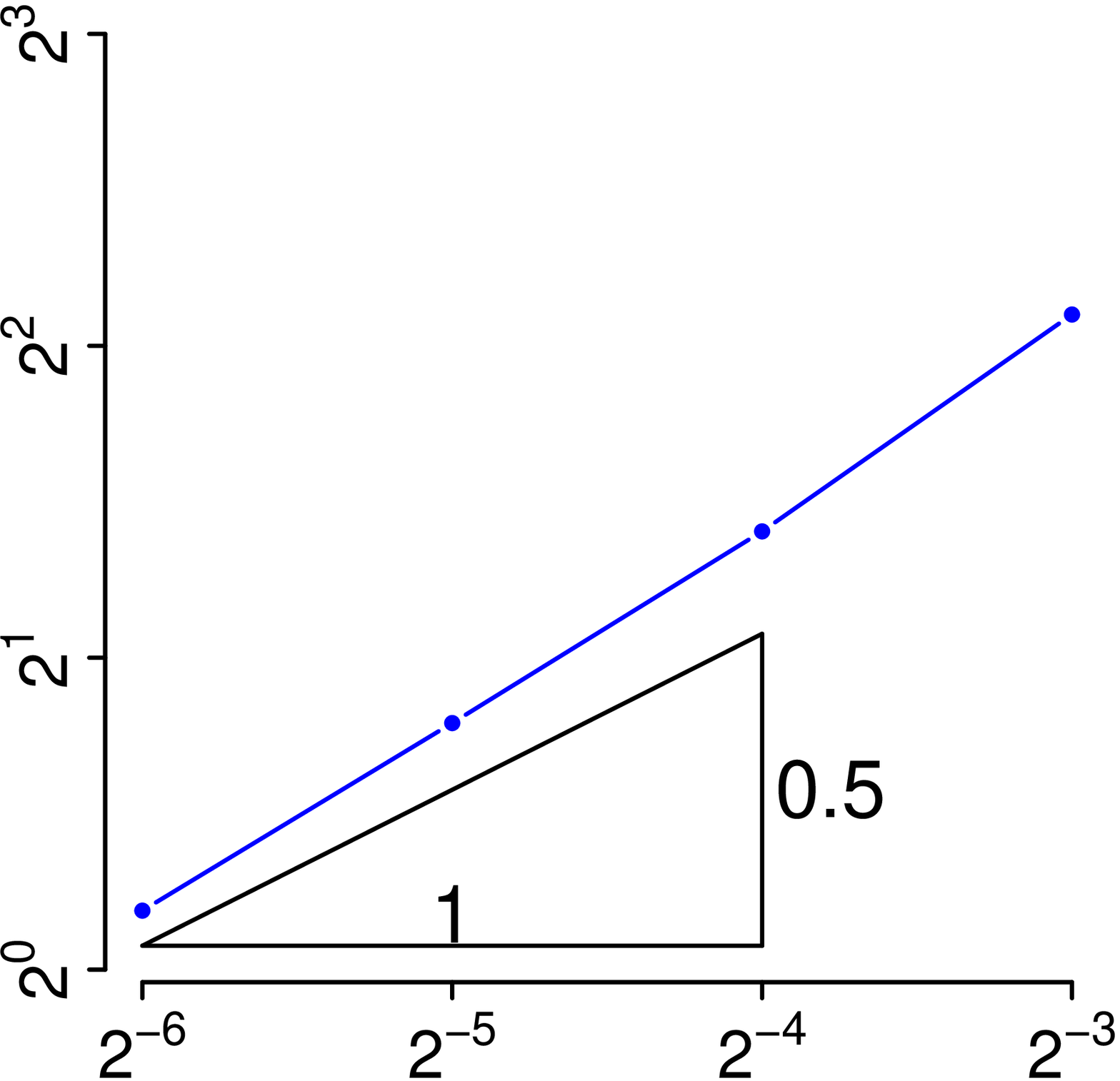}
        \includegraphics[width=0.31\textwidth]{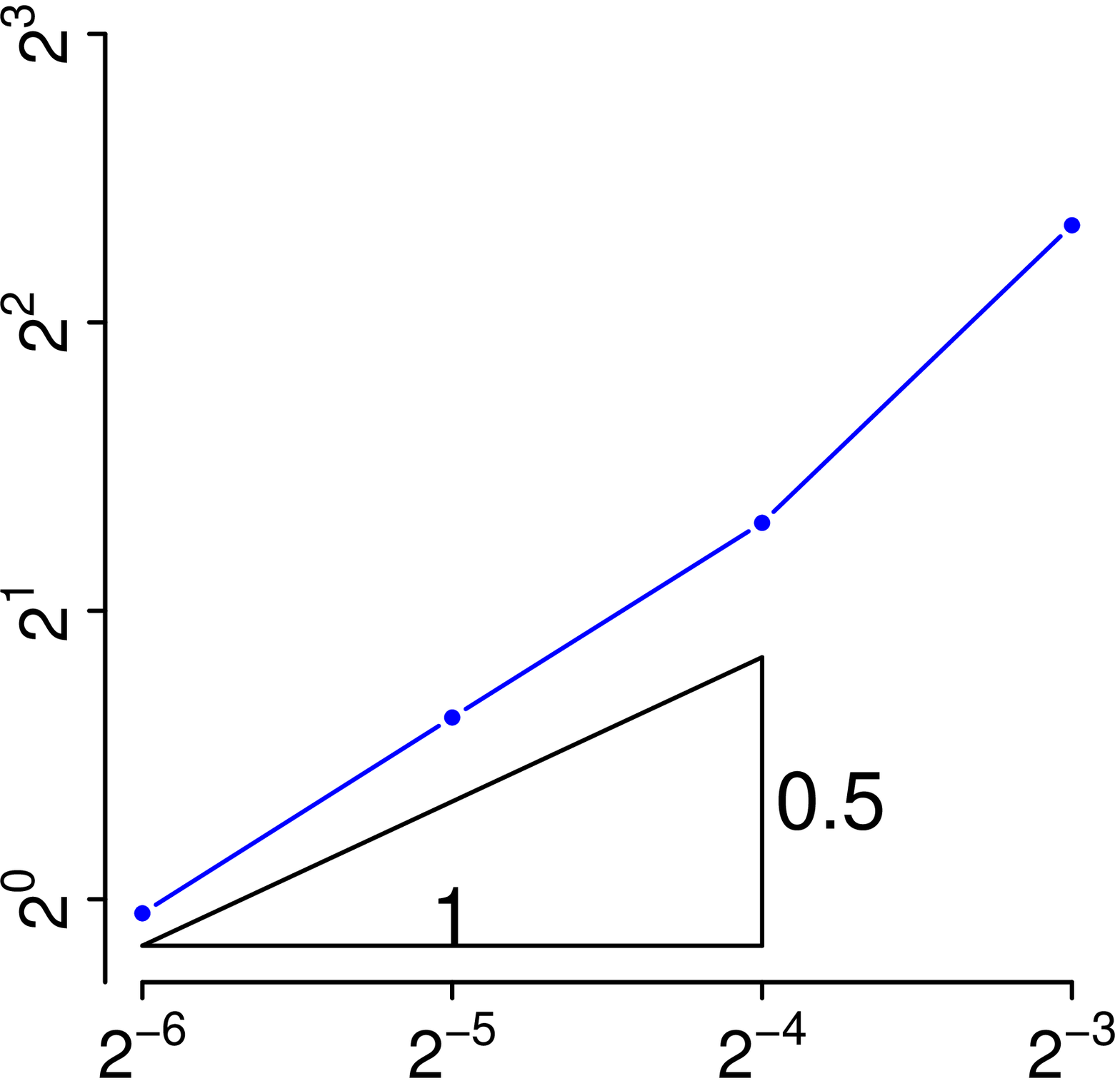}
        \includegraphics[width=0.31\textwidth]{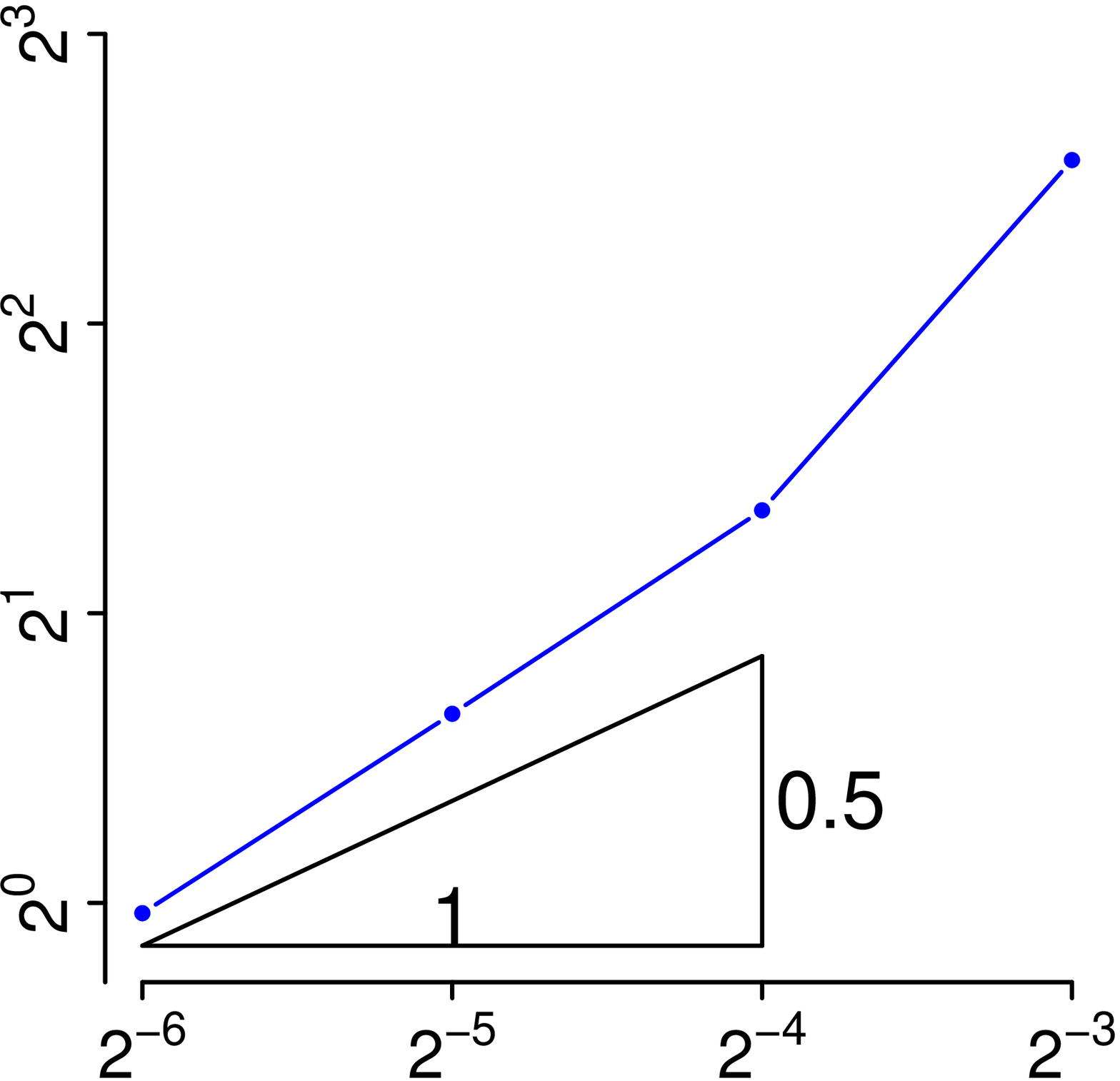}
    \end{center}
    \caption{%
        $H^2$-errors for the soft curve formulation for %
        $\lambda=1$, $\lambda=2$, and $\lambda=3$ (from left to right) %
        over the grid sizes $h=2^{-k}$, $k= 3,\dots,6$.}
    \label{fig:errorSoftCurveGeneric}
\end{figure}
\begin{figure}
    \begin{center}
        \includegraphics[width=0.31\textwidth]{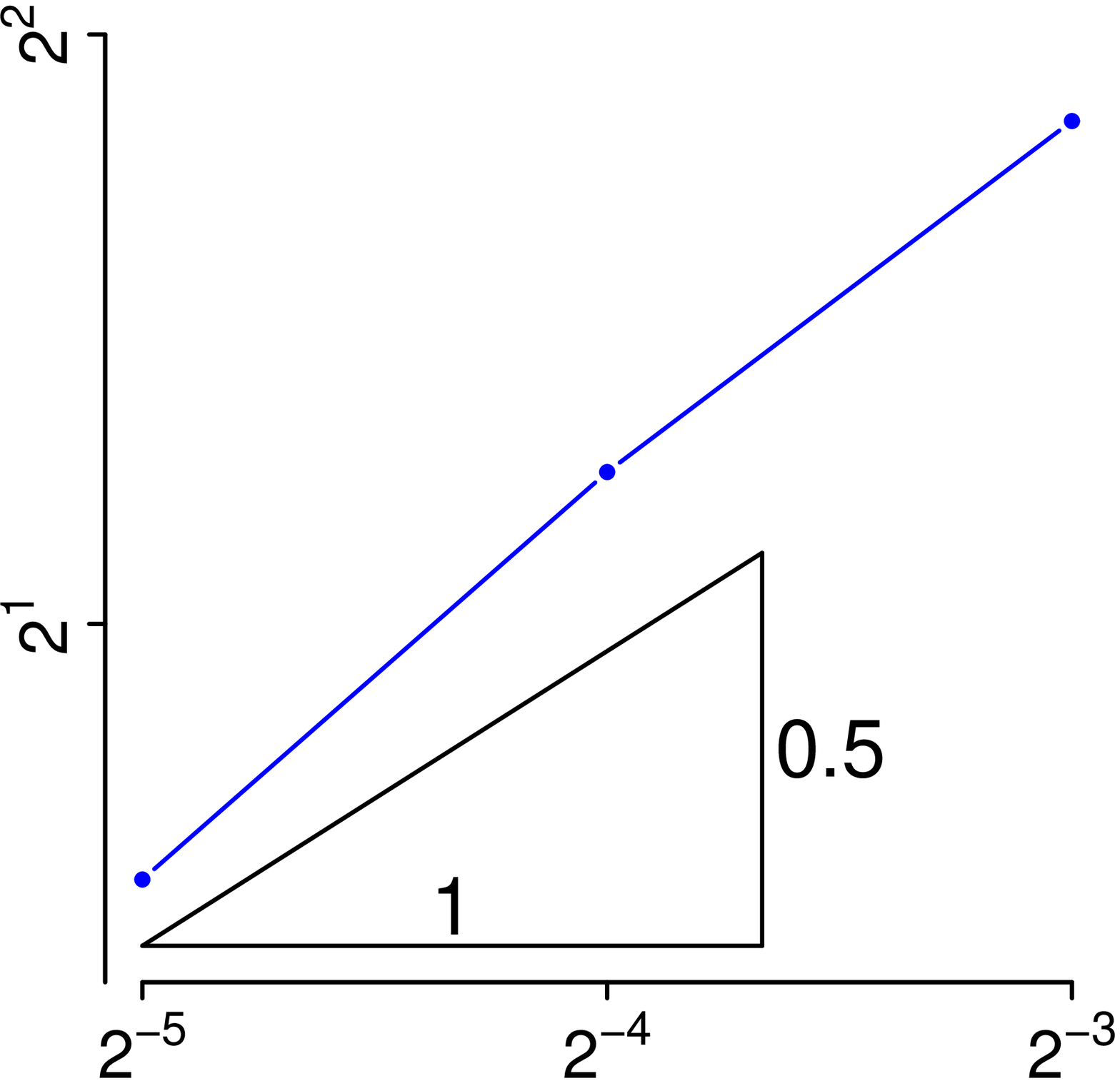}
        \hspace{0.7cm}
        \includegraphics[width=0.31\textwidth]{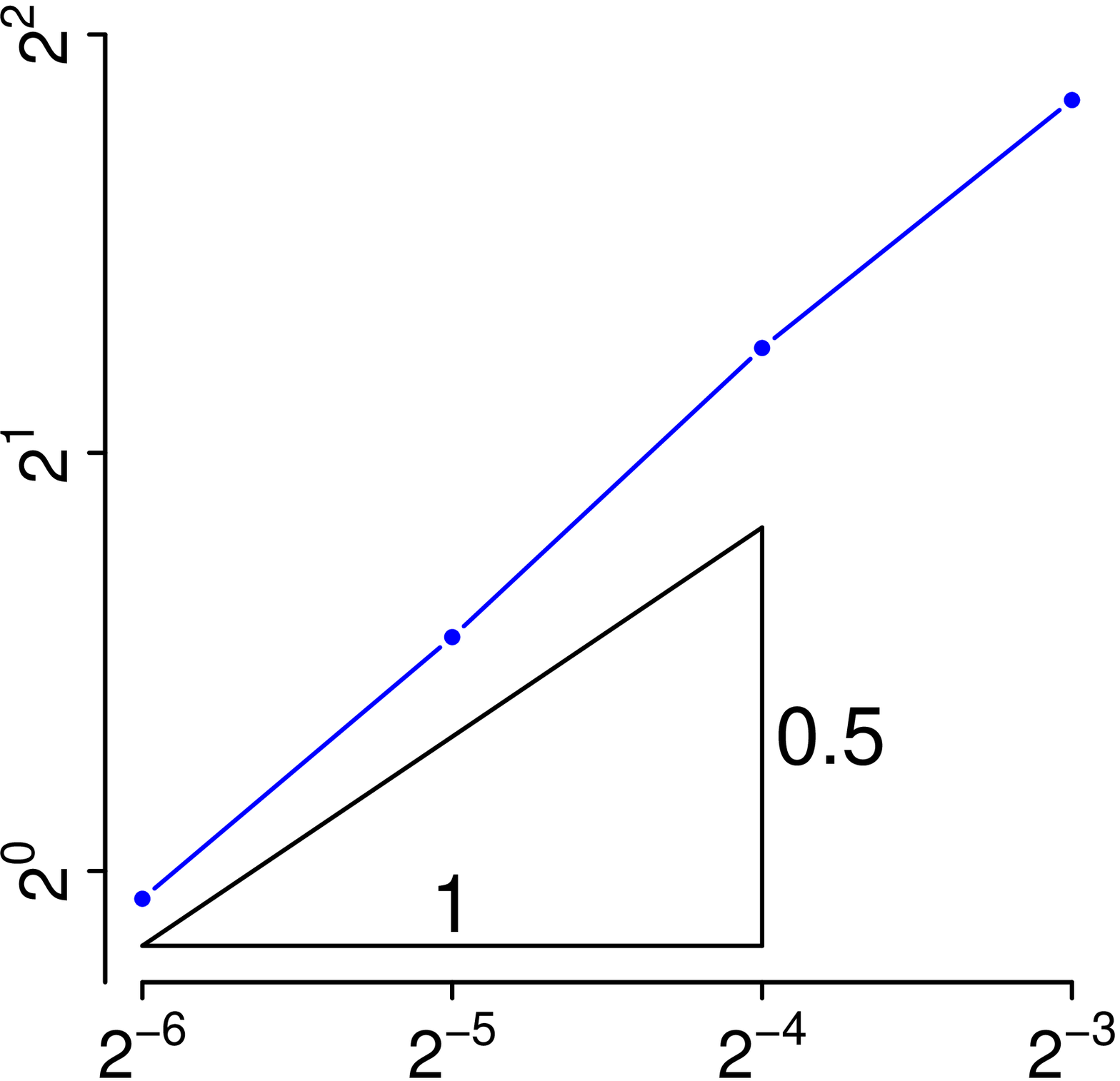}
    \end{center}
    \caption{%
        $H^2$-errors for the soft bulk formulation for %
        $s=0$ over the grid sizes $h=2^{-k}$, $k= 3,4,5$ and $s=1$ over the grid sizes $h=2^{-k}$, $k= 3,\dots,6$ (from left to right) %
        .}
    \label{fig:errorSoftAreaGeneric}
\end{figure}

As before we also computed the
$H^1$- and $L^2$-errors for the same set of example problems.
For the soft curve formulation we again observe that the order of the
$H^1$- and $L^2$-error depicted in
Figure~\ref{fig:H1errorSoftCurveGeneric} and Figure~\ref{fig:L2errorSoftCurveGeneric}, respectively,
is essentially squared in comparison with the $H^2$-error
leading to a convergence order which is approximately $O(h)$.
The situation is different for the soft bulk formulation.
Here, the $H^1$-error depicted in Figure~\ref{fig:H1errorSoftAreaGeneric}
is of order $O(h^{3/2})$ and the $L^2$-error depicted in
Figure~\ref{fig:L2errorSoftAreaGeneric} is of order $O(h^{5/2})$.
While the improved $H^1$- and $L^2$-order is not covered by the presented
theory anyway, we also cannot explain the
surprising difference in the observed order for the symmetric and non-symmetric
example problems.
\begin{figure}[H]
    \begin{center}
        \includegraphics[width=0.31\textwidth]{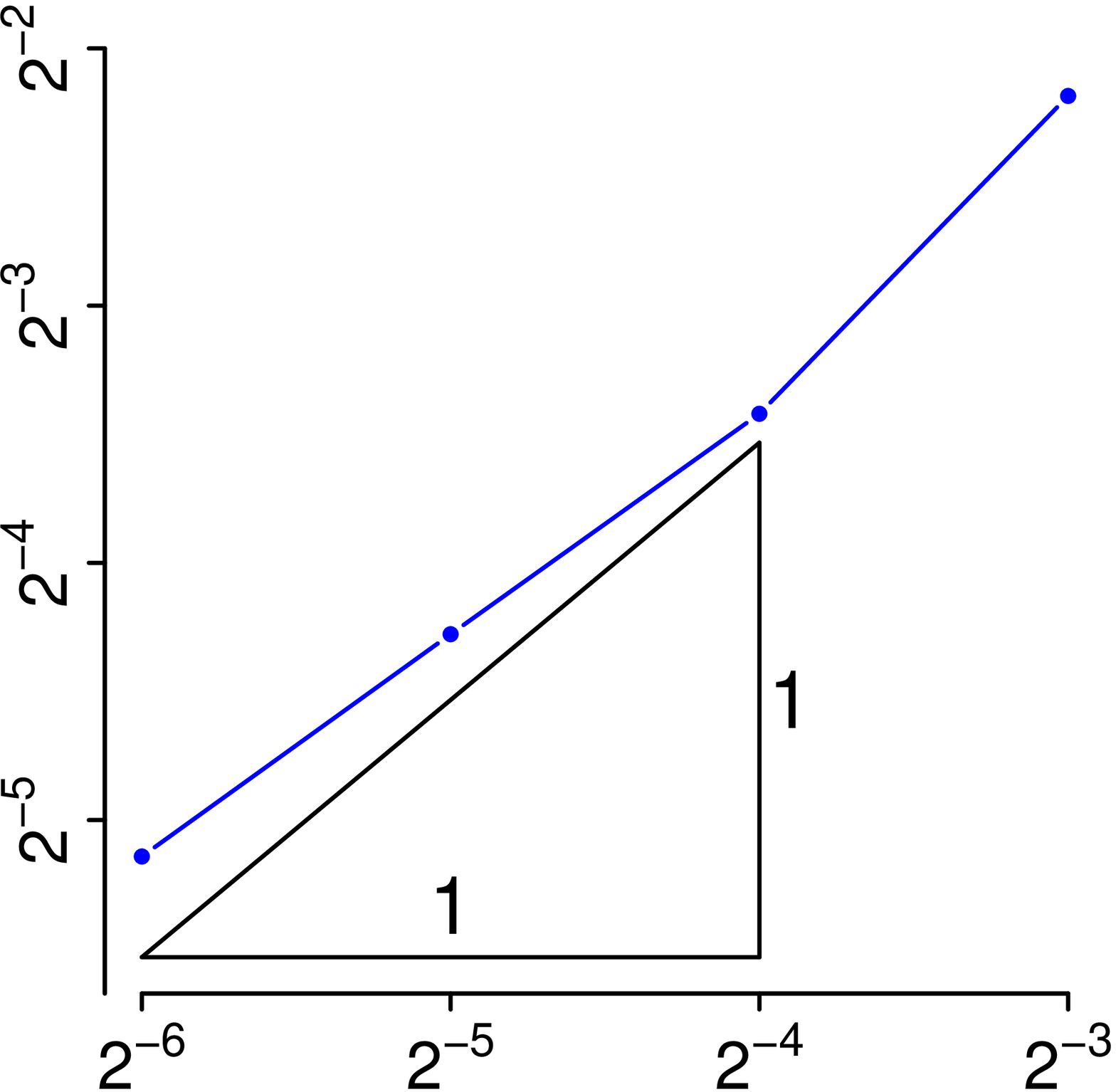}
        \includegraphics[width=0.31\textwidth]{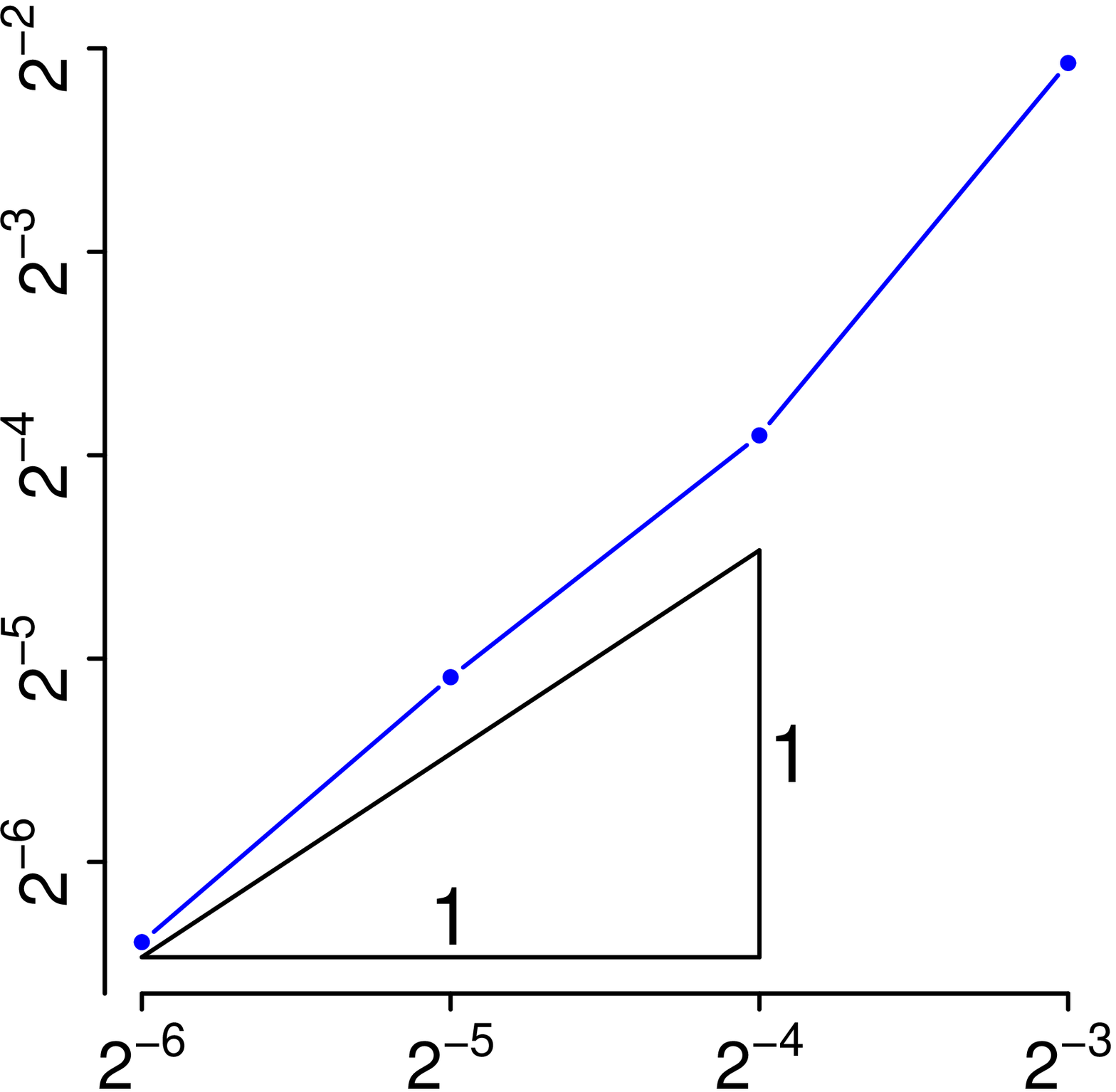}
        \includegraphics[width=0.31\textwidth]{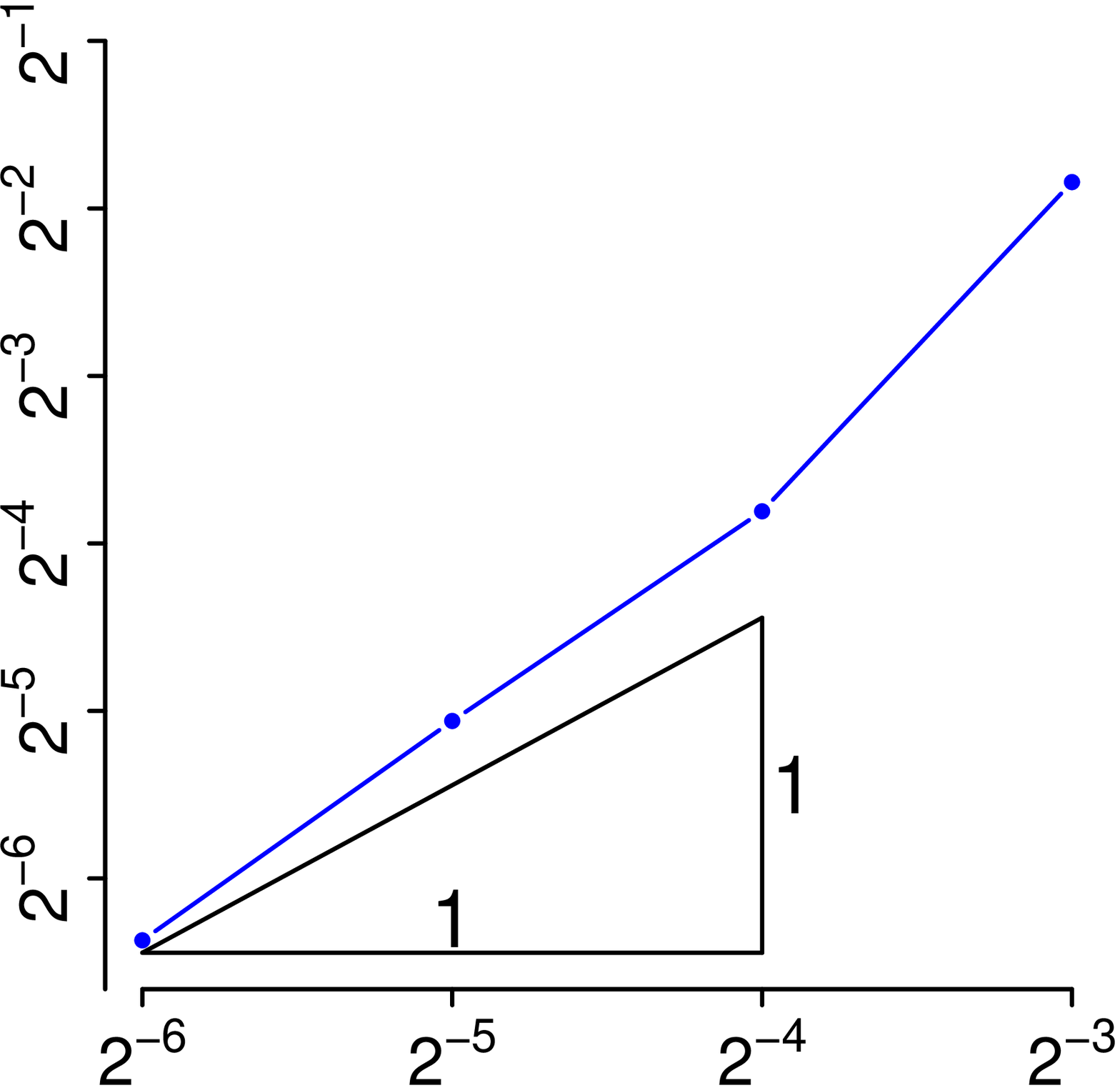}
    \end{center}
    \caption{%
        $H^1$-errors for the soft curve formulation for %
        $\lambda=1$, $\lambda=2$, and $\lambda=3$ (from left to right) %
        over the grid sizes $h=2^{-k}$, $k= 3,\dots,6$.}
    \label{fig:H1errorSoftCurveGeneric}
\end{figure}
\begin{figure}[H]
    \begin{center}
        \includegraphics[width=0.31\textwidth]{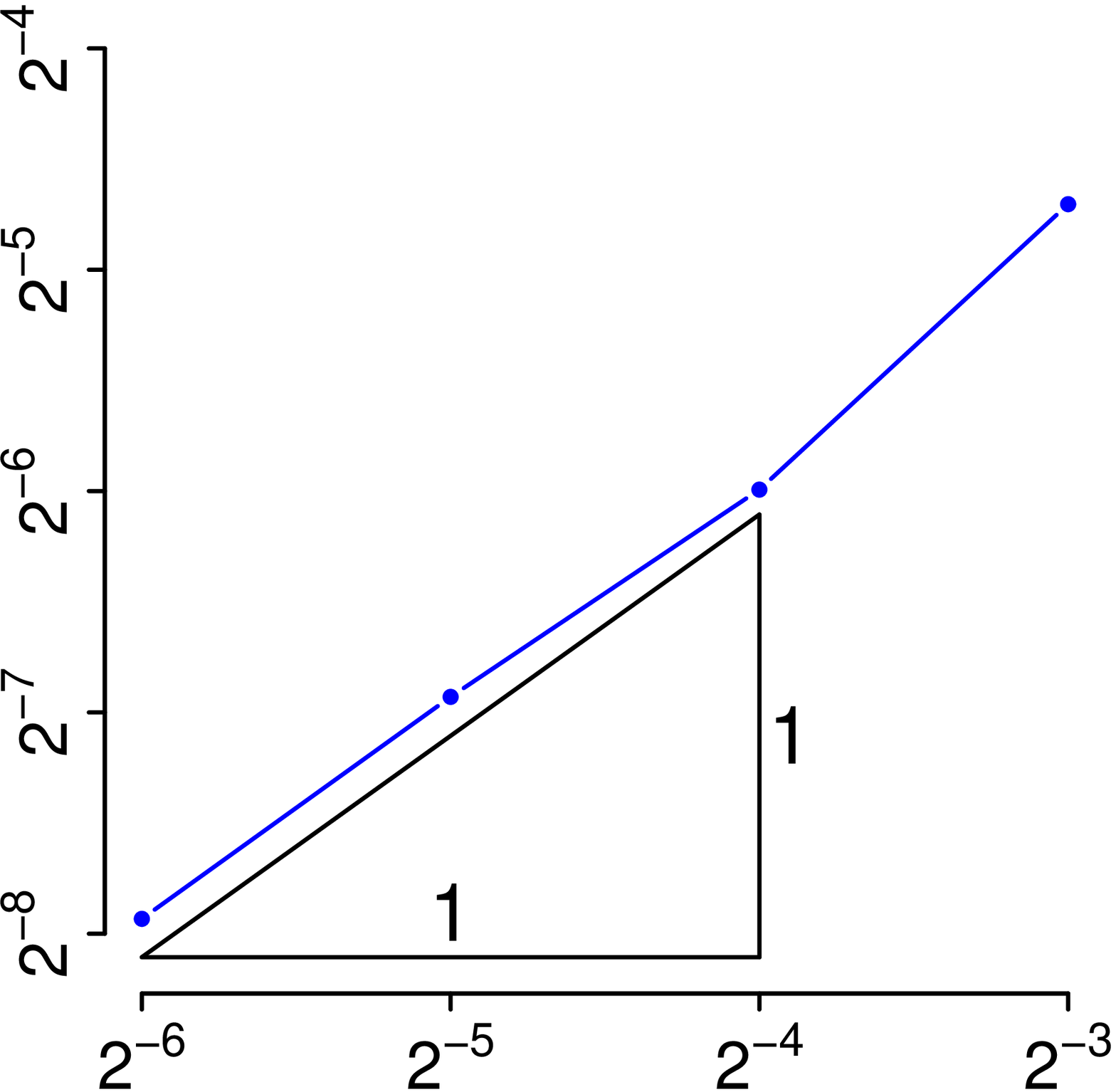}
        \includegraphics[width=0.31\textwidth]{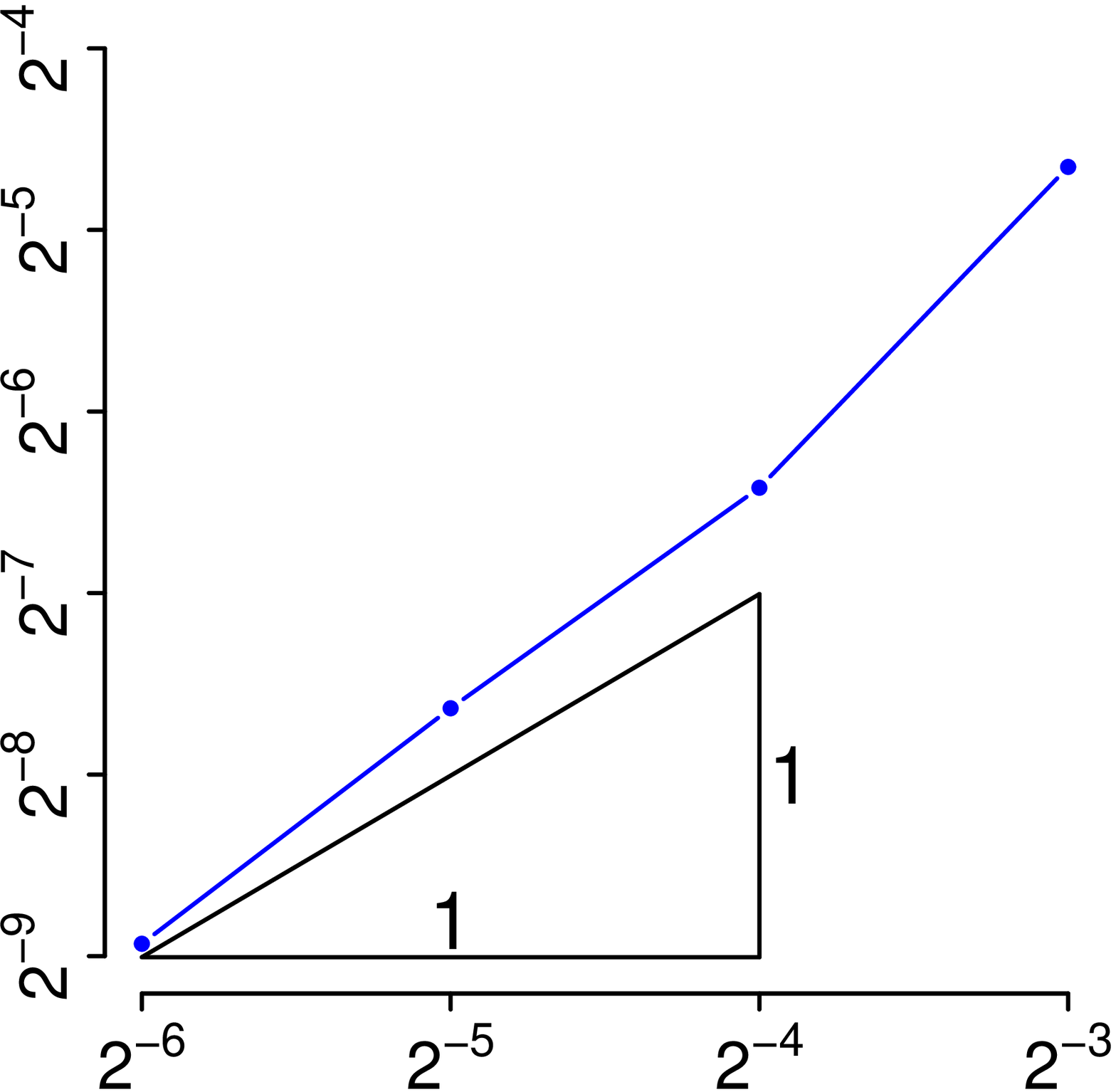}
        \includegraphics[width=0.31\textwidth]{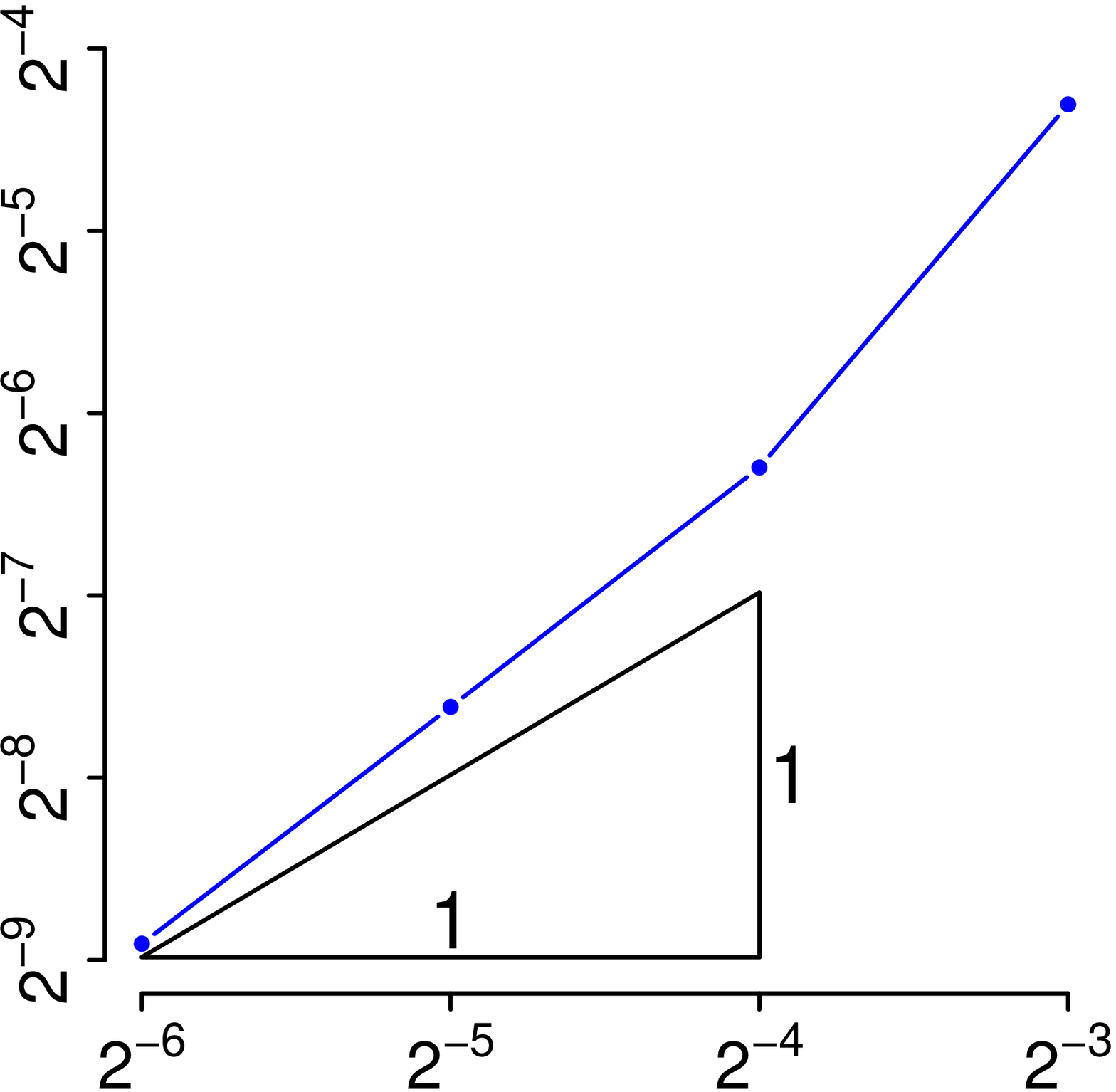}
    \end{center}
    \caption{%
        $L^2$-errors for the soft curve formulation for %
        $\lambda=1$, $\lambda=2$, and $\lambda=3$ (from left to right) %
        over the grid sizes $h=2^{-k}$, $k= 3,\dots,6$.}
    \label{fig:L2errorSoftCurveGeneric}
\end{figure}
\begin{figure}[H]
    \begin{center}
        \includegraphics[width=0.31\textwidth]{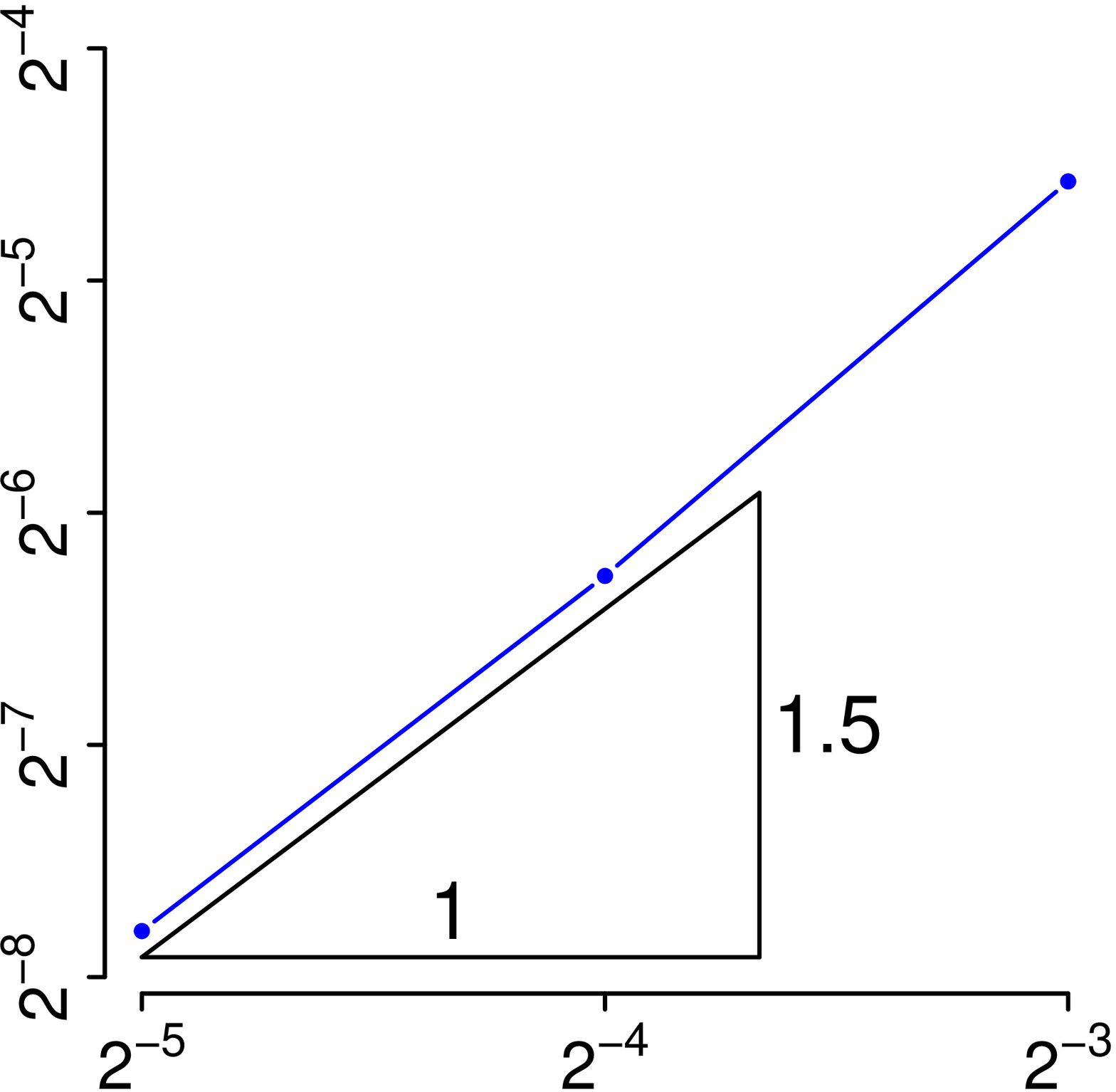}
        \hspace{0.7cm}
        \includegraphics[width=0.31\textwidth]{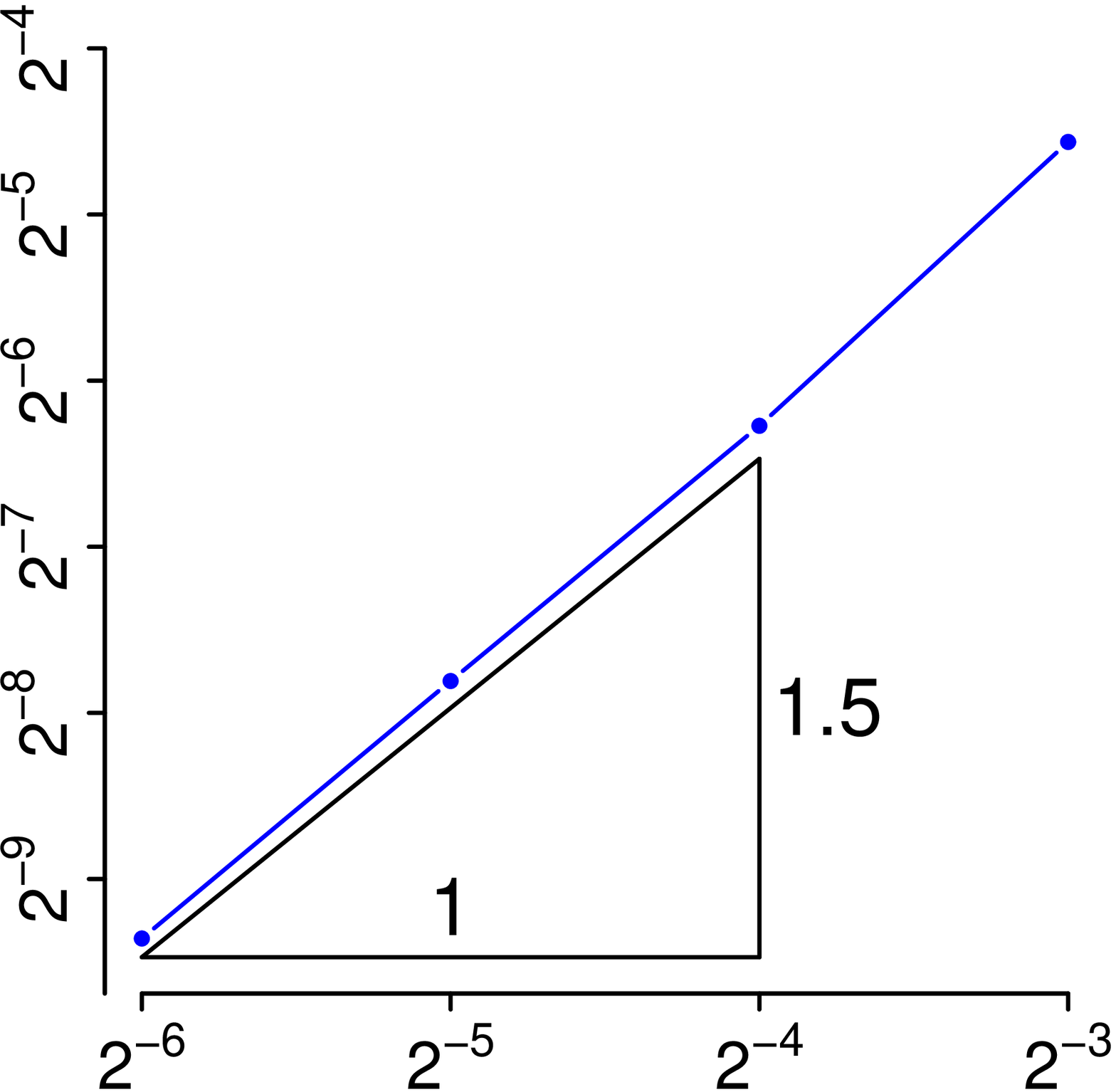}
    \end{center}
    \caption{%
        $H^1$-errors for the soft bulk formulation for %
        $s=0$ over the grid sizes $h=2^{-k}$, $k= 3,4,5$ and $s=1$ over the grid sizes $h=2^{-k}$, $k= 3,\dots,6$ (from left to right) %
        .}
    \label{fig:H1errorSoftAreaGeneric}
\end{figure}
\begin{figure}[H]
    \begin{center}
        \includegraphics[width=0.31\textwidth]{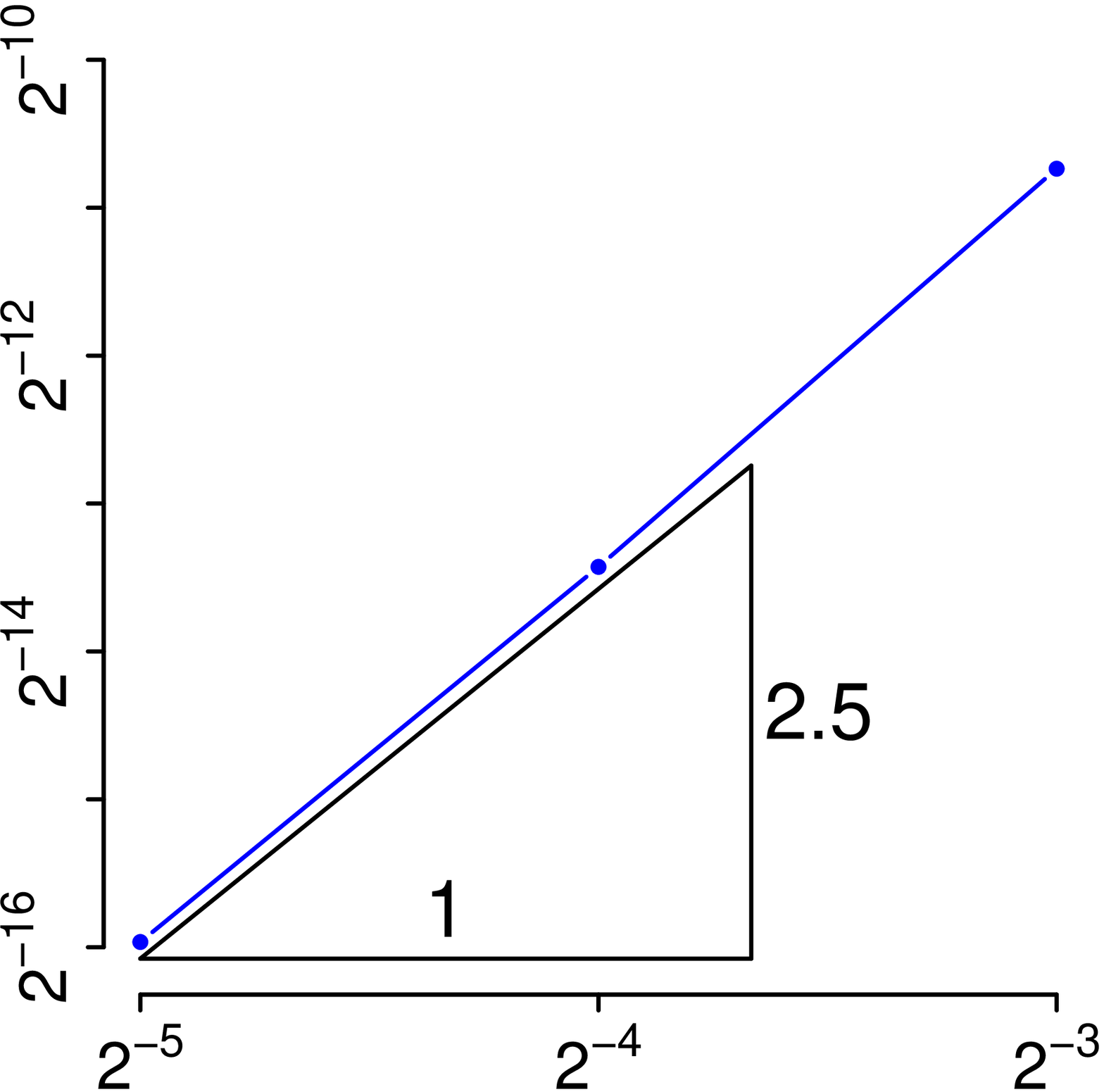}
        \hspace{0.7cm}
        \includegraphics[width=0.31\textwidth]{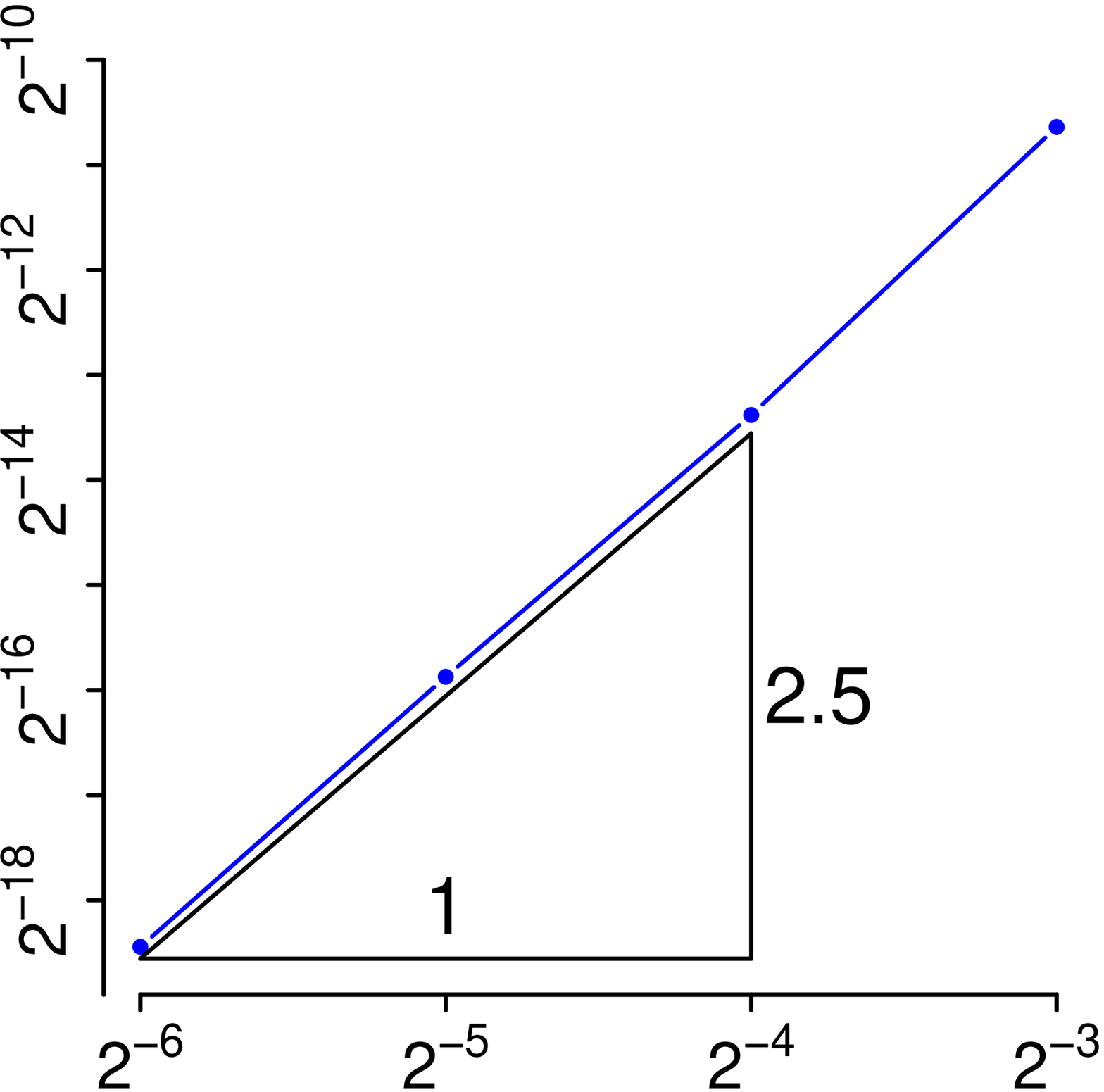}
    \end{center}
    \caption{%
        $L^2$-errors for the soft bulk formulation for %
        $s=0$ over the grid sizes $h=2^{-k}$, $k= 3,4,5$ and $s=1$ over the grid sizes $h=2^{-k}$, $k= 3,\dots,6$ (from left to right) %
        .}
    \label{fig:L2errorSoftAreaGeneric}
\end{figure}


\section{Acknowledgements}


This research has been funded by Freie Universit\"at Berlin and was carried
out in the associated project AP1 "Particles in lipid bilayers" of the
CRC 1114 "Scaling Cascades in Complex Systems".

\section{Appendix}


\begin{proof}[Proof of Lemma~\ref{lem:gluing}]
    Let $u_1 := u|_{\Omega_B}$ and $u_2 := u|_B$ as well as $\Omega_1 := \Omega_B$ and $\Omega_2 := B$.
    Define $\varphi_{\alpha}(x,y) := \frac{\vert D^\alpha u(x) - D^\alpha u(y) \vert^2}{\Vert x-y\Vert^{2s+2}}$.
    For $s \in (0,\frac{1}{2})$ the Sobolev--Slobodeckij norm of $u$ is given by
    \begin{align*}
        \Vert u\Vert_{H^{2+s}(\Omega)}^2
        & = \Vert u\Vert_{H^2(\Omega)}^2 + \max_{\vert\alpha\vert = 2} \Vert D^\alpha u\Vert_{H^s(\Omega)}^2
        \\ & = 
            \Vert u\Vert_{H^2(\Omega)}^2 + \max_{\vert\alpha\vert = 2} \int_{\Omega \times \Omega} \varphi_{\alpha}(x,y) \, \mathrm{d}(x,y)\text{.}
    \end{align*}
    Splitting the domain of integration yields
    \begin{align*}
        \int_{\Omega \times \Omega} \varphi_{\alpha}(x,y) \, \mathrm{d}(x,y)
        & = \Vert D^\alpha u_1 \Vert_{H^s(\Omega_1)}^2 + \Vert D^\alpha u_2\Vert_{H^s(\Omega_2)}^2
            + 2\int_{\Omega_1 \times \Omega_2} \varphi_{\alpha}(x,y) \, \mathrm{d}(x,y)\text{.}
    \end{align*}
    Using the layercake principle we write
    \begin{align*}
        \int_{\Omega_1 \times \Omega_2} \varphi_{\alpha}(x,y)
        = \int_0^\infty \left \vert \left\{ (x,y) \in \Omega_1\times \Omega_2 \mid \varphi_{\alpha}(x,y) \geq t  \right\} \right\vert \, \mathrm{d}t\text{.}
    \end{align*}
    By the assumptions from \cref{sec:notationAndProblems} it holds that $\Omega_1$ and $\Omega_2$ are Lipschitz domains.
    By virtue of the Sobolev embedding theorems we conclude $u_i \in C^2(\overline{\Omega_i})$, see for example \cite[Theorem 5.4]{Adams75}.
    Consequently, the constant $C := \max_{\vert\alpha\vert = 2} \sup_{\Omega_1 \times \Omega_2} \vert D^\alpha u_1(x) - D^\alpha u_2(y)\vert^2 \in \mathbb{R}_{>0}$ exists and thus the implication
    \begin{align*}
        \frac{\vert D^\alpha u(x) - D^\alpha u(y)\vert^2}{\Vert x-y\Vert^{2s+2}} \geq t
        \ \Longrightarrow \ 
        \Vert x-y\Vert \leq C^{\frac{1}{2s+2}} t^{-\frac{1}{2s+2}} =: \widetilde{C} t^{-\frac{1}{2s+2}}
    \end{align*}
    holds for all $(x,y) \in \Omega_1 \times \Omega_2$.
    Let $\Gamma := \partial\Omega_1 \cap \partial \Omega_2$.
    From
    \begin{align*}
        \left\vert \left\{ (x,y) \in \Omega_1\times \Omega_2 \mid \Vert x-y\Vert \leq \widetilde{C} t^{-\frac{1}{2s+2}} \right\} \right\vert
        \leq 4\vert\Gamma\vert (\widetilde{C} t^{-\frac{1}{2s+2}})(\widetilde{C} t^{-\frac{1}{2s+2}})^2
        =: \hat{C} t^{\frac{-3}{2s+2}}
    \end{align*}
    we infer
    \begin{align*}
        & \int_0^\infty \left \vert \left\{ (x,y) \in \Omega_1\times \Omega_2 \mid \varphi_{\alpha}(x,y) \geq t  \right\} \right\vert \, \mathrm{d}t
        \\ & \leq \vert \Omega \vert^2 + \int_1^\infty \left \vert \left\{ (x,y) \in \Omega_1\times \Omega_2 \mid \varphi_{\alpha}(x,y) \geq t  \right\} \right\vert \, \mathrm{d}t
        \\ & \leq \vert \Omega \vert^2 + \int_1^\infty \hat{C} t^{\frac{-3}{2s+2}} \, \mathrm{d}t\text{.}
    \end{align*}
    This expression is finite for $s < \frac{1}{2}$.
    This implies $u \in H^{2+\frac{1}{2}-\delta}(\Omega)$ for all $\delta >0$
    as was to be shown.
\end{proof}

\begin{proof}[Proof of Proposition~\ref{prop:strang}]
    For brevity we define $\wt{u} := \wt{u}^X_\eps$
    and assume without loss of generality that $\eps = (\eps_i)_{i=1,\dots,m} \leq 1$ holds component-wise.
    Let $v \in X$ and define $\wt{w}:= v -\wt{u}$.
    We have by \eqref{eq:helper1611061548}
    \begin{align*}
        \norm{\wt{w}}_{a_\eps}^2
        & = \norm{\wt{w}}_a^2 + \sum_{i=1}^m \frac{1}{\eps_i} \norm{\wt{w}}_{b_i}^2
        \\ & \leq \norm{\wt{w}}_a^2 + \sum_{i=1}^m \frac{1}{\eps_i} \norm{\wt{w}}_{\wt{b}_i}^2 + \sum_{i=1}^m \frac{c_i}{\eps_i} \norm{\wt{w}}^2\text{.}
    \end{align*}
    Using the continuity of $a$, the coercivity of $\wt{a}_\eps$ and $\eps \leq 1$ this leads to the inequality
    \begin{align}\label{eq:helper1611061601}
        \norm{\wt{w}}_{a_\eps}^2
        & \leq \left(\frac{\norm{a}}{\wt{\alpha}} + 1 + \sum_{i=1}^m \frac{c_i}{\eps_i \wt{\alpha}} \right) \norm{\wt{w}}_{\wt{a}_\eps}^2\text{.}
    \end{align}
    On the other hand we have the equality
    \begin{align*}
        \norm{\wt{w}}_{\wt{a}_\eps}^2
        & = \wt{a}_\eps(v,\wt{w}) - \wt{a}_\eps(\wt{u},\wt{w})
            + \ell_\eps(\wt{w}) - a_\eps(u,\wt{w})
            + a_\eps(v,\wt{w}) - a_\eps(v,\wt{w})
    \end{align*}
    which by application of \eqref{eq:abstractInexactPenaltyVariation},
    Cauchy--Schwarz inequality, coercivity of $a_\eps$, definition of
    $\wt{a}_\eps$ and $\wt{\ell}_\eps$ and $\eps \leq 1$ yields
    \begin{align}\label{eq:helper1611061616}
        \begin{aligned}
            \norm{\wt{w}}_{\wt{a}_\eps}^2
            & = a_\eps(v-u,\wt{w}) + (\wt{a}_\eps-a_\eps)(v,\wt{w}) + (\ell_\eps-\wt{\ell}_\eps)(\wt{w})
            \\ &
                \leq \norm{v-u}_{a_\eps} \norm{\wt{w}}_{a_\eps} 
             \\ & 
                \quad + \frac{\abs{(\wt{a}-a)(v,\wt{w})} + \sum_{i=1}^m \frac{1}{\eps_i} \abs{(\wt{b}_i-b_i)(u-v,\wt{w})} + \abs{(\ell-\wt{\ell})(\wt{w})}}{\sqrt{\alpha} \norm{\wt{w}}} \norm{\wt{w}}_{a_\eps}
        \end{aligned}
    \end{align}
    Then from the triangle inequality and \eqref{eq:helper1611061616} inserted into \eqref{eq:helper1611061601} we get
    \begin{align*}
        \norm{u - \wt{u}}_{a_\eps}
        & \leq \norm{u - v}_{a_\eps} + \norm{v - \wt{u}}_{a_\eps}
        \\ & \leq \norm{u - v}_{a_\eps} + \left(\frac{\norm{a}}{\wt{\alpha}} + 1 + \sum_{i=1}^m \frac{c_i}{\eps_i \wt{\alpha}} \right) \cdot \left( \norm{v-u}_{a_\eps} + \phantom{\frac{\sum_{i=1}^m \frac{1}{\eps_i}(\wt{b}}{\norm{\wt{w}}}}\right.
        \\ & \qquad + \left. \frac{\abs{(\wt{a}-a)(v,\wt{w})} + \sum_{i=1}^m \frac{1}{\eps_i} \abs{(\wt{b}_i-b_i)(u-v,\wt{w})} + \abs{(\ell-\wt{\ell})(\wt{w})}}{\sqrt{\alpha} \norm{\wt{w}}} \right)
    \end{align*}
    which proves the statement by taking the infimum over all $v \in X$ and replacing $\wt{w} = v - \wt{u}$ by the supremum over all $w \in X$.
\end{proof}

\bibliography{paper}

\ifimanumstyle
    \bibliographystyle{IMANUM-BIB}
\else
    \bibliographystyle{plain}
\fi

\end{document}